\definecolor{darkspringgreen}{rgb}{0., 0.55, 0.3}
\definecolor{dartmouthgreen}{rgb}{0.05, 0.5, 0.06}
\definecolor{etonblue}{rgb}{0.59, 0.78, 0.64}
\definecolor{airforceblue}{rgb}{0., 0.4, 0.66}
\definecolor{arylideyellow}{rgb}{0.91, 0.84, 0.42}
\definecolor{emerald}{rgb}{0.31, 0.78, 0.47}
\definecolor{uclagold}{rgb}{1.0, 0.7, 0.0}
\definecolor{cadmiumorange}{rgb}{0.93, 0.53, 0.18}
\theoremstyle{thmstyleone}
\newtheorem{theorem}{Theorem}
\newtheorem{proposition}[theorem]{Proposition}
\newtheorem{lemma}[theorem]{Lemma}
\newtheorem{corollary}[theorem]{Corollary}
\theoremstyle{thmstyletwo}
\newtheorem{remark}{Remark}
\theoremstyle{thmstylethree}
\newcommand{\lopd}[0]{\mathcal{L}_\Delta}
\newcommand{\lopdt}[0]{\mathcal{L}_{\Delta}}
\newcommand{\usol}[0]{\underline{\uvec{u}}_\Delta}
\newcommand{\tess}[0]{\mathcal{T}_h}
\newcommand{\uvec}[2][3]{\boldsymbol{#2\mkern-#1mu}\mkern#1mu}
\newcommand\norm[1]{\left\lVert#1\right\rVert}
\newcommand\abs[1]{\left\lvert#1\right\rvert}
\newcommand{\R}{\mathbb{R}}
\newcommand{\dt}{\Delta t}
\newcommand{\CFL}{\text{CFL}}
\newcommand{\undu}{\underline{\uvec{u}}}
\newcommand{\undspacetilde}{\widetilde{\underline{\uvec{\phi}}}}
\newcommand{\spacestuff}{\uvec{\phi}}
\newcommand{\undv}{\underline{\uvec{v}}}
\newcommand{\undw}{\underline{\uvec{w}}}
\newcommand{\undr}{\underline{\uvec{r}}}
\newcommand{\undz}{\underline{\uvec{z}}}
\newcommand{\usolp}[0]{\underline{\uvec{u}}_{\Delta}^{(p)}}
\newcommand{\Xp}[0]{X^{(p)}}
\newcommand{\Yp}[0]{Y^{(p)}}
\newcommand{\uex}[0]{\underline{\uvec{u}}_{ex}}
\newcommand{\uexp}[0]{\underline{\uvec{u}}_{ex}^{(p)}}
\newcommand{\embep}[0]{\mathcal{E}^{(p)}}
\newcommand{\projp}[0]{\Pi^{(p)}}
\newcommand{\trialspace}[1]{\varphi_{#1}} 
\newcommand{\trialspaceiter}[2]{\varphi_{#1}^{#2}} 
\newcommand{\trialtime}[1]{\psi^{#1}} 
\newcommand{\trialtimeiter}[2]{\psi^{#1,#2}} 
\newcommand{\trialspacetime}[1]{\vartheta^{#1}} 
\newcommand{\trialspacetimeiter}[2]{\vartheta^{#1,#2}}
\newcommand{\trialspaceFVc}[1]{\lambda_{#1}} 
\newcommand{\APNPM}{{ADER-$\mathbb P_N \mathbb P_M$ }}
\newcommand{\Temp}{T}
\newcommand{\E}{\boldsymbol{E}}
\newcommand{\RIcolor}[1]{{\leavevmode\color{black} #1}}
\newcommand{\RIIcolor}[1]{{\leavevmode\color{black} #1}}
\begin{document}
\author{L. Micalizzi\footnote{Affiliation: Institute of Mathematics, University of Zurich, Winterthurerstrasse 190, Zurich, 8057, Switzerland. Email: lorenzo.micalizzi@math.uzh.ch.}, D. Torlo\footnote{Affiliation: SISSA mathLab, SISSA, via Bonomea 265, Trieste, 34136, Italy. Email: davide.torlo@sissa.it.} and W.Boscheri\footnote{Affiliation: Dipartimento di Matematica e Informatica, University of Ferrara, via Machiavelli 30, Ferrara, 44121, Italy. Email: walter.boscheri@unife.it.}}
\title{Efficient iterative arbitrary high order methods:\\ an adaptive bridge between low and high order}

\maketitle

\abstract{We propose a new paradigm for designing efficient p-adaptive arbitrary high order methods. 
We consider arbitrary high order iterative schemes that gain one order of accuracy at each iteration and we modify them in order to match the accuracy achieved in a specific iteration with the discretization accuracy of the same iteration.
Apart from the computational advantage, the new modified methods allow to naturally perform p-adaptivity, stopping the iterations when appropriate conditions are met.
Moreover, the modification is very easy to be included in an existing implementation of an arbitrary high order iterative scheme and it does not ruin the possibility of parallelization, if this was achievable by the original method. 

An application to the \RIcolor{Arbitrary DERivative (ADER)} method for hyperbolic Partial Differential Equations (PDEs) is presented here.
We explain how such framework can be interpreted as an arbitrary high order iterative scheme, by recasting it as a Deferred Correction (DeC) method,
and how to easily modify it to obtain a more efficient formulation, 
in which a local \textit{a posteriori} limiter can be naturally integrated
leading to p-adaptivity and structure preserving properties. 
Finally, the novel approach is extensively tested against classical benchmarks for compressible gas dynamics to show the robustness and the computational efficiency.
}

\section{Introduction}
In recent years, the need for a very accurate description of physical phenomena in the context of advanced technological applications has determined an increasing interest towards large scale simulations. In order to reduce their enormous computational cost and to make them more accessible, several strategies have been proposed, among which: 
\begin{itemize}
\item parallelization, leading to a reduction of the computational time proportional to the number of employed processors with excellent scaling properties \cite{krais2021flexi,rabenseifner2009hybrid,rio2021massively,tsoutsanis2018improvement,tsoutsanis2016addressing,huang2020modeling,FARHAT199361};
\item structure preserving schemes, to preserve physical properties at the discrete level without excessive mesh refinements, e.g. positivity preserving schemes \cite{ciallella2022arbitrary,huang2019positivity,meister2014unconditionally,perthame1996positivity,offner2020arbitrary}, well-balanced schemes \cite{ciallella2022arbitrary,ciallella2022global,noelle2007high,cheng2019new,berberich2021high,mantri2021well,Kur,xing2006high,CaPa,gomez2023implicit}, TVD or maximum principle preserving schemes \cite{gottlieb1998total,zhang2011maximum,hajduk2021monolithic,balsara2000monotonicity}, entropy conservative/dissipating schemes \cite{gaburro2022high,abgrall2021relaxation,chen2017entropy,chenreview,ranocha2020relaxation,abgrall2022reinterpretation,friedrich2018entropyHP,kuzmin2020entropycg,kuzmin2020entropy,abgrall2019analysis,gassner2016well,wintermeyer2015entropy,winters2015comparison,offner2018stability,lukavcova2023convergence,abgrall2022convergence}; 
\item high order methods, which guarantee higher accuracy for coarser meshes and shorter computational times, on smooth problems, as they are able to catch complicated physical structures that low order methods struggle to obtain, e.g. finite element based methods \cite{abgrall2006residual,abgrall2019high,abgrall2020high,ricchiuto2010explicit,ricchiuto2015explicit,jund2007arbitrary,kuzmin2020entropycg}, finite volume methods \cite{xing2006high,balsara2000monotonicity,shu1998essentially,berberich2021high,noelle2007high,CaPa}, discontinuous Galerkin methods \cite{glaubitz2020stable,chen2017entropy,ALEMOOD1,ADERNSE,Busto2020,Maria,gaburro2022high,gaburro2021unified,gassner2016well}.
\end{itemize}
However, high order methods are, for the moment, mostly relegated to academic contexts. The main reason is given by the fact that concrete applications are characterized by shocks, which are well-known to reduce the accuracy to first order, disregarding for the formal order of accuracy. Further, in the presence of shocks, high order schemes are more subjected to instabilities.
Users are therefore comprehensibly unwilling to pay extra costs in terms of complexity of the numerical method and its implementation, if the effort is not rewarded with the initially expected advantages.
Indeed, one can observe that in the case of non-regular solutions, usually the shocks do not cover the whole computational domain but rather some lower dimensional manifolds. Therefore, a possibility to use high order methods at their best is the adoption of extra procedures to be implemented, e.g., limiters, \textit{a posteriori} correction techniques, blenders with low order schemes or adaptive strategies relying on shock detectors. 
However, such procedures require a relevant interference with the basic implementation as they are not naturally embedded in the original method at the theoretical level and, if their introduction is not performed in a careful way, the additional cost associated with them may be comparable to the computational gain given by the high order feature.

Here, generalizing the idea introduced in \cite{loredavide}, we propose a new arbitrary high order formulation naturally allowing for order adaptivity, namely the so-called p-adaptivity. The formulation relies on an underlying arbitrary high order iterative scheme, which is easily modified in a suitable efficient way.
Arbitrary high order iterative methods are characterized by iterative procedures involving an approximated solution to a certain problem, whose order of accuracy increases by one at each iteration, converging towards the solution of a background high order scheme.
The idea is to modify the generic iteration in such a way that the order of accuracy of the discretization proper to the iteration itself matches the order of accuracy achieved in that specific iteration, hence reducing the computational cost. 
The number of iterations is chosen equal to the aimed order of accuracy, as already done in \cite{Decoriginal,minion2003semi,abgrall2020high,han2021dec,loredavide}. 
On the contrary, in other works, the iterations are stopped when a prescribed tolerance is reached \cite{titarevtoro,BTVC16,dumbser2008unified,Lagrange2D,ArepoTN}. 
This is most of the times unnecessary for explicit methods, as the accuracy of the underlying discretization is not as accurate as that tolerance.

The modification we propose in this work results in several advantages: for a fixed final order, we get a substantial drop in the computational cost with respect to the traditional approach as the low order iterations are performed with low order structures which are computationally cheaper. 
Moreover, in this framework it is straightforward to limit the achieved order on the fly, stopping the computation at a certain iteration if specified criteria are not met. 
This last aspect allows to overcome the typical drawback of \textit{a posteriori} \RIcolor{Multi-dimensional Optimal Order Detection (MOOD)} techniques \cite{CDL1,CDL2,CDL3,ALEMOOD1,ALEMOOD2,bacigaluppi2019posteriori}, in which, if the high order scheme produces a solution which is not valid according to some physical or numerical criteria, low order solutions must be recomputed with their associated computational cost after the high order solution has been already computed.

Apart from the advantages and the many possible applications, a remarkable aspect which is worth underlying is given by the fact that, if one already has an implementation of an arbitrary high order iterative scheme, then the introduction of the proposed modification is straightforward. 
Furthermore, the modification does not prevent the possible parallelization of the original code.

In this paper, we discuss the application to an ADER framework for hyperbolic PDEs, proposed originally by Toro et al.~\cite{mill,toro3}. The approach is validated on challenging benchmarks, showing the arbitrary high order character and the optimal performance in the context of adaptivity. 
Furthermore, the efficiently designed \textit{a posteriori} limiter, which drives the p-adaptivity, allows to provably preserve physical properties. In this work, we will use it to preserve the positivity of some quantities associated to the numerical solution. 
The resulting numerical schemes are of high order of accuracy with one-step time discretization and making use of general polygonal cells in space.

\RIcolor{
Summarizing, the main contributions of this work are the following: the generalization of the idea introduced in \cite{loredavide} into an abstract and rigorous framework, its application to the ADER context for hyperbolic PDEs with increasing reconstruction degree along the iterations of the space-time predictor solution, and the design of an efficient structure preserving \textit{a posteriori} limiter.
}

The work is organized as follows. 
In Section \ref{chap:Efficient}, we describe the general idea in the specific framework of DeC methods. In Section \ref{chap:ADER}, we present the ADER method for hyperbolic PDEs with Discontinuous Galerkin (DG) spatial discretization and we explain how it can be interpreted as an iterative arbitrary high order method. Section \ref{chap:NEWADER} is devoted to the description of the proposed modifications for the ADER framework to obtain new efficient p-adaptive schemes. The new methods are validated against several challenging benchmarks in Section \ref{chap:Num}, demonstrating the accuracy and the robustness of the novel approach. Moreover, the computational advantages with respect to the original formulation are experimentally shown. Finally, conclusions and future perspectives are reported in Section \ref{chap:Conc}.

\section{New efficient iterative arbitrary high order methods}\label{chap:Efficient}
Iterative arbitrary high order methods are numerical methods characterized by an iteration process converging to the solution of an underlying arbitrary high order scheme. 
Here, we focus on particular iterative arbitrary high order methods, for which the order of accuracy with respect to the limit solution increases by one at each iteration. Examples of such methods are the DeC \cite{minion2003semi,Decoriginal,Decremi,han2021dec,abgrall2020high} and the ADER schemes \cite{Maria,gaburro2021unified,boscheri2022continuous,dumbser2005ader,dumbser2008unified}, which have been broadly used in the context of the numerical solution of hyperbolic systems of PDEs.
The current use of such methods consists in fixing an underlying high order scheme and performing the iteration process until convergence or, more efficiently, until the desired accuracy is reached, i.e., with a number of iterations exactly equal to the order of the method.

We propose here a new simple modification of the aforementioned framework which allows the computational cost of the original methods to be reduced, designing novel schemes with a natural adaptive character. 
In particular, let us consider a general iterative arbitrary high order method of order $P$, whose generic iteration is denoted by $\mathcal{M}_P$. Then, a simple sketch of the method is given by
\begin{align}\label{eq:process}
\undu^{(0)} \xrightarrow{\mathcal{M}_P} \undu^{(1)} \xrightarrow{\mathcal{M}_P} \undu^{(2)} \xrightarrow{\mathcal{M}_P} \dots \xrightarrow{\mathcal{M}_P} \undu^{(P-1)} \xrightarrow{\mathcal{M}_P} \undu^{(P)},
\end{align}
where $\undu^{(p)}$ is the result of the $p$-th iteration and $P$ iterations have been considered to achieve the optimal accuracy with the minimal number of iterations.
The proposed modification consists in replacing the generic $p$-th iteration of the method of order $P$ with the iteration $\mathcal{M}_p$, that is the iteration associated to the same method but with order $p$ which is in general cheaper but, nevertheless, accurate enough to get order $p$. The sketch of the modified method reads
\begin{align}\label{eq:process_new}
\undu^{(0)} \xrightarrow{\mathcal{M}_1} \undu^{(1)} \xrightarrow{\mathcal{M}_2} \undu^{(2)} \xrightarrow{\mathcal{M}_{3}} \dots \xrightarrow{\mathcal{M}_{P-1}} \undu^{(P-1)} \xrightarrow{\mathcal{M}_P} \undu^{(P)}.
\end{align}
The formal order of accuracy is not spoiled as $\mathcal{M}_p$, the new $p$-th iteration, is still sufficiently accurate to provide $\undu^{(p)}$ of order $p$ starting by $\undu^{(p-1)}$ of order $p-1$.
The technical details of changing the iteration structures at each iteration depend on the underlying iterative arbitrary high order method under consideration. For example, an intermediate embedding process like an interpolation may be needed to project $\undu^{(p-1)}$ onto the same space of $\undu^{(p)}$ in order to perform the $p$-th iteration.
The modification to pass from the original formulation \eqref{eq:process} to the new one \eqref{eq:process_new} is minimal, and its inclusion in an existing implementation of an iterative arbitrary high order method is straightforward.
The modified methods are in general cheaper than the original ones, as the computational cost related to $\mathcal{M}_p$ for $p<P$ is smaller than the one related to $\mathcal{M}_P$. 
Moreover, differently from what happens in the original framework, increasing the number of iterations always determines an increase in the order of accuracy without any saturation. Therefore, in principle, it is possible not to fix \textit{a priori} the final order, but instead to continue the iterations until a certain tolerance is matched. This may provide a valid strategy for engineering applications.

We focus now on a particular family of iterative arbitrary high order methods, and we discuss their modification to comply with the p-adaptivity setting proposed in this work.

\subsection{DeC methods}\label{chap:DeC}
The DeC is an abstract procedure that can be exploited to design arbitrary high order iterative methods for differential problems. 
In particular, the formulation presented in \cite{Decremi} relies on the definition of two operators $\lopdt^1,\lopdt^2: X \rightarrow Y$ associated to a given problem, dependent on a parameter $\Delta$, acting between two normed vector spaces $(X,\norm{\cdot}_X)$ and $(Y,\norm{\cdot}_Y)$. 
The operator $\lopdt^2$ is a high order nonlinear implicit operator that we would like to solve, i.e., to find $\undu_\Delta \in X$ such that $\lopdt^2(\undu_\Delta)=\uvec{0}_Y$, to get a high order approximation of the solution to the original problem. 
Nevertheless, due to its implicit nature, the operator $\lopdt^2$ is difficult to be solved.
On the other hand, the operator $\lopdt^1$ is a low order explicit operator, for which it is easy to find $\widetilde{\undu}\in X$ such that $\lopdt^1(\widetilde{\undu})=\undz$ for $\undz \in Y.$ 
Due to its simplicity, it would be desirable to solve $\lopdt^1$, rather than $\lopdt^2$. However, the resulting solution would not be accurate enough for our purposes.

The following theorem allows to approximate $\undu_\Delta$ arbitrarily well by a simple explicit iterative procedure, 
\RIIcolor{
which is much cheaper than the direct solution of the operator $\lopdt^2$.
}

\begin{theorem}[DeC]\label{th:DeC}
Let the operators $\lopdt^1$ and $\lopdt^2$ fulfill the following properties.
\begin{enumerate}
\item \textbf{Existence of a unique solution to $\lopd^2$} \\
$\exists ! \,\usol \in X$ solution of $\lopd^2$ such that $\lopd^2(\usol)=\uvec{0}_Y$;
\item \textbf{Coercivity-like property of $\lopd^1$} \\
$\exists \,\alpha_1 \geq 0$ independent of $\Delta$ s.t. 
\begin{equation}
	\norm{\lopd^1(\underline{\uvec{v}})-\lopd^1(\underline{\uvec{w}})}_Y\geq \alpha_1\norm{\underline{\uvec{v}}-\underline{\uvec{w}}}_X, ~ \forall \underline{\uvec{v}},\underline{\uvec{w}}\in X;
	\label{eq:DeC_coercivity}
\end{equation}
\item \textbf{Lipschitz-continuity-like property of $\lopd^1-\lopd^2$} \\
$\exists\, \alpha_2 \geq 0$ independent of $\Delta$ s.t. 
\begin{equation}
	\norm{\left[\lopd^1(\underline{\uvec{v}})\!-\!\lopd^2(\underline{\uvec{v}})\right]\!-\!\left[\lopd^1(\underline{\uvec{w}})\!-\!\lopd^2(\underline{\uvec{w}})\right]}_Y\!\leq \!\alpha_2 \Delta \!\norm{\underline{\uvec{v}}-\underline{\uvec{w}}}_X ,~\forall \underline{\uvec{v}},\underline{\uvec{w}}\in X.
	\label{eq:DeC_lipschitz}
\end{equation}
\end{enumerate}
Given a $\underline{\uvec{u}}^{(0)}\in X$, define recursively the sequence of vectors $\underline{\uvec{u}}^{(p)}$ as the solution of
\begin{equation}
\label{eq:DeC_iteration}
\lopd^1(\underline{\uvec{u}}^{(p)}):=\lopd^1(\underline{\uvec{u}}^{(p-1)})-\lopd^2(\underline{\uvec{u}}^{(p-1)}), \quad p\geq 1.
\end{equation}
Then, the following error estimate holds:
\begin{equation}
\label{eq:DeC_accuracy}
\norm{\underline{\uvec{u}}^{(p)}-\usol}_X \leq \left( \Delta \frac{\alpha_2}{\alpha_1} \right)^p\norm{\underline{\uvec{u}}^{(0)}-\usol}_X \quad \forall p\in \mathbb{N}. 
\end{equation}
\end{theorem}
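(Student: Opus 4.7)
The plan is a straightforward induction on $p$. The base case $p=0$ is the trivial inequality $\|\underline{\uvec{u}}^{(0)}-\usol\|_X \le \|\underline{\uvec{u}}^{(0)}-\usol\|_X$. For the inductive step, I want to establish the one-step contraction
\begin{equation*}
\norm{\underline{\uvec{u}}^{(p)}-\usol}_X \;\le\; \Delta\,\frac{\alpha_2}{\alpha_1}\,\norm{\underline{\uvec{u}}^{(p-1)}-\usol}_X,
\end{equation*}
after which composing $p$ copies yields \eqref{eq:DeC_accuracy}.

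The key trick is to exploit that $\usol$ solves $\lopd^2$. Starting from the defining iteration \eqref{eq:DeC_iteration} and subtracting $\lopd^1(\usol)$ from both sides while harmlessly adding $\lopd^2(\usol)=\uvec{0}_Y$, I would rewrite the identity as
\begin{equation*}
\lopd^1(\underline{\uvec{u}}^{(p)})-\lopd^1(\usol) \;=\; \bigl[\lopd^1(\underline{\uvec{u}}^{(p-1)})-\lopd^2(\underline{\uvec{u}}^{(p-1)})\bigr] - \bigl[\lopd^1(\usol)-\lopd^2(\usol)\bigr].
\end{equation*}
Taking $Y$-norms, the left-hand side is bounded below by $\alpha_1\|\underline{\uvec{u}}^{(p)}-\usol\|_X$ via the coercivity-like property \eqref{eq:DeC_coercivity}, while the right-hand side is bounded above by $\alpha_2\Delta\,\|\underline{\uvec{u}}^{(p-1)}-\usol\|_X$ via the Lipschitz-like property \eqref{eq:DeC_lipschitz} applied to $\underline{\uvec{v}}=\underline{\uvec{u}}^{(p-1)}$ and $\underline{\uvec{w}}=\usol$. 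Rearranging (assuming $\alpha_1>0$, which is needed for the stated estimate to make sense) delivers the contraction, and iterating $p$ times produces the claim.

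I do not expect real obstacles: the whole argument is a telescoping manipulation plus two one-line applications of the hypotheses. The only subtlety worth flagging is that the iteration \eqref{eq:DeC_iteration} is implicitly assumed to be well-posed, i.e., that $\underline{\uvec{u}}^{(p)}$ is uniquely determined given $\underline{\uvec{u}}^{(p-1)}$; uniqueness is in fact a direct consequence of the coercivity of $\lopd^1$, while existence is taken for granted by the framework (this is precisely the point in which $\lopd^1$ is said to be easy to invert). With that in place, the proof is essentially a two-inequality sandwich followed by induction.
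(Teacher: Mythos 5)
Your proof is correct and follows essentially the same route as the paper's: establish the identity $\lopd^1(\underline{\uvec{u}}^{(p)})-\lopd^1(\usol)=\bigl[\lopd^1(\underline{\uvec{u}}^{(p-1)})-\lopd^2(\underline{\uvec{u}}^{(p-1)})\bigr]-\bigl[\lopd^1(\usol)-\lopd^2(\usol)\bigr]$ using $\lopd^2(\usol)=\uvec{0}_Y$, then sandwich it between the coercivity bound on the left and the Lipschitz bound on the right, and iterate. Your remark about well-posedness of the iteration (uniqueness from coercivity, existence posited by the framework) is a sensible addition that the paper leaves implicit.
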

\begin{proof}
Using the coercivity-like property of $\lopd^1$ \eqref{eq:DeC_coercivity}, the definition of the DeC iteration \eqref{eq:DeC_iteration}, the fact that $\lopd^2(\usol)=\uvec{0}_Y$ and the Lipschitz-continuity-like property of $\lopd^1-\lopd^2$ \eqref{eq:DeC_lipschitz}, we get
\begin{align}
\begin{split}
\norm{\underline{\uvec{u}}^{(p)}-\usol}_X & \leq \frac{1}{\alpha_1}  \norm{\lopd^1(\underline{\uvec{u}}^{(p)})-\lopd^1(\usol)}_Y\\
&= \frac{1}{\alpha_1}  \norm{\lopd^1(\underline{\uvec{u}}^{(p-1)})-\lopd^2(\underline{\uvec{u}}^{(p-1)})-\lopd^1(\usol)}_Y\\
&= \frac{1}{\alpha_1}  \norm{ \left[ \lopd^1(\underline{\uvec{u}}^{(p-1)})-\lopd^2(\underline{\uvec{u}}^{(p-1)})\right]-\left[\lopd^1(\usol)-\lopd^2(\usol)\right]}_Y\\
&\leq \Delta \frac{\alpha_2}{\alpha_1}  \norm{\underline{\uvec{u}}^{(p-1)}-\usol}_X.
\end{split}
\end{align}
Applying recursively the previous inequality, we obtain the \RIIcolor{desired result}. 
\end{proof}
For $\Delta$ small enough, the sequence of vectors $\underline{\uvec{u}}^{(p)}$ converges to $\usol$ independently of the initial vector $\underline{\uvec{u}}^{(0)}.$
At each iteration of the DeC procedure \eqref{eq:DeC_iteration}, the computation of $\underline{\uvec{u}}^{(p)}$ is straightforward by our assumptions on the operator $\lopdt^1$, since $\underline{\uvec{u}}^{(p-1)}$ is known and it is possible to explicitly compute the right hand side. Furthermore, thanks to the accuracy estimate \eqref{eq:DeC_accuracy}, at each iteration one order of accuracy is gained with respect to $\usol$.
\begin{remark}{(On ``over-resolving'' the operator $\lopdt^2$ and on the number of iterations $P$)}\label{rmk:P}
Usually, we are not strictly interested in the solution $\usol$ of the operator $\lopdt^2$, but rather on the analytical solution $\undu_{ex}$ of the underlying problem.
If $S$ is the order of accuracy of $\usol$, in general, it suffices to approximate $\usol$ with $S$-th order accuracy.
This consideration allows to bound the number of iterations, saving computational time, without necessarily getting convergence towards the solution to the operator $\lopdt^2$.
In particular, thanks to Theorem \ref{th:DeC}, if $\underline{\uvec{u}}^{(0)}$ is an $O(\Delta)$-approximation of $\undu_{ex}$, then $\norm{\undu^{(P)}-\undu_{ex}} = O(\Delta^{1+\min{(P,S)}})$ leading to an order of accuracy equal to $\min{(P,S)}$.
Hence, the optimal choice is $P=S$. Any further iteration will not increase the order of accuracy of the method with respect to $\uex$ but only with respect to $\usol$.
\end{remark}
\begin{remark}{(On explicit and implicit DeC methods)}
\RIIcolor{The DeC philosophy is based on having a simple iterative procedure allowing to obtain a high order approximation of the solution of a given problem, which would have been difficult to compute directly.
In the applications, we will focus on an explicit setting by considering explicit operators $\lopdt^1$, leading to an update formula \eqref{eq:DeC_iteration} with an explicit character.
However, within the described framework, one could easily switch to an implicit setting by selecting an implicit low order operator $\lopdt^1$.
In that case, one obtains an iterative procedure that is not explicit, yet, much simpler than the direct solution of $\lopdt^2$.
In applications to ODEs and PDEs, implicit formulations \cite{abgrall2020high,busto2021staggered,ALEMOOD2} allow to achieve better stability properties and less time step restrictions. 
The theoretical framework presented in this work applies both to explicit and implicit settings.}
\end{remark}

\subsection{New efficient DeC methods}
Here, we will discuss, at the theoretical level, an efficient modification for DeC methods. 
It is based on the replacement of the operators $\lopdt^{1}$ and $\lopdt^{2}$ by iteration-specific operators $\lopdt^{1,(p)}$ and $\lopdt^{2,(p)}$ in order to strictly obtain the order $p$ at the $p$-th iteration. 
In particular, we prove the following result.

\begin{theorem}\label{th:NEWDEC}
Consider a problem with exact solution $\uex \in Z$. Then, take some normed spaces $(\Xp,\norm{\cdot}_{\Xp})$ for $p\in \mathbb{N}$ and $(\Yp,\norm{\cdot}_{\Yp})$ for $p\geq 1$. 
For every $p \geq 1 $, consider also two operators $\lopdt^{1,(p)},\lopdt^{2,(p)}:\Xp \rightarrow \Yp$ dependent on the same parameter $\Delta$ and fulfilling the properties of 
Theorem \ref{th:DeC} for some $\alpha_{1}^{(p)}, \alpha_{2}^{(p)}>0$ and $\usolp\in \Xp$. 
Furthermore, let us assume that $ \forall p\in \mathbb{N}$ there exist an embedding operator $\embep:\Xp\rightarrow X^{(p+1)}$, associating to each $\undu^{(p)} \in \Xp$ an approximation $\undu^{*(p)}:=\embep(\undu^{(p)}) \in X^{(p+1)}$, and some projection $\projp:Z\rightarrow \Xp$, associating to $\uex$ an approximation $\uexp:=\projp(\uex) \in \Xp.$

Given an $\underline{\uvec{u}}^{(0)}\in X^{(0)}$, we consider the new DeC method whose general $p$-th iteration is given by
\begin{equation}
\begin{cases}
\undu^{*(p-1)}:=\mathcal{E}^{(p-1)}(\undu^{(p-1)}),\\
\lopd^{1,(p)}(\underline{\uvec{u}}^{(p)}):=\lopd^{1,(p)}(\underline{\uvec{u}}^{*(p-1)})-\lopd^{2,(p)}(\underline{\uvec{u}}^{*(p-1)}).
\end{cases}
\label{eq:NEWDEC_p_iteration}
\end{equation}
Suppose that the following properties hold:
\begin{enumerate}
\item \textbf{Accuracy of $\usolp$ with respect to $\uexp$} \\
\begin{equation}
\norm{\usolp-\uexp}_{\Xp}=O(\Delta^{p+1}), \quad p \geq 1;
\label{eq:usolp_uexp}
\end{equation}
\item \textbf{Accuracy of the embedding $\embep$} \\
\begin{equation}
\norm{\undu^{*(p)}-\undu_{ex}^{(p+1)}}_{X^{(p+1)}}\leq C\norm{\undu^{(p)}-\uexp}_{\Xp}, \quad \forall p\in \mathbb{N},
\label{eq:embE}
\end{equation}
for some constant $C$ independent on $\Delta$;
\item \textbf{Accuracy of $\underline{\uvec{u}}^{(0)}$} \\
\begin{equation}
\norm{\undu^{(0)}-\undu_{ex}^{(0)}}_{X^{(0)}}=O(\Delta).
\label{eq:accuracyu0}
\end{equation}
\end{enumerate}
Then, it follows that
\begin{equation}
\norm{\undu^{(p)}-\uexp}_{\Xp}=O(\Delta^{p+1}), \quad \forall p\in \mathbb{N}.
\label{eq:NEWDEC_accuracy}
\end{equation}
\end{theorem}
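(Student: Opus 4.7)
The plan is to prove the accuracy estimate \eqref{eq:NEWDEC_accuracy} by induction on $p$, using Theorem \ref{th:DeC} as a black box at each level $p$ in the spaces $(\Xp,\norm{\cdot}_{\Xp})$ and $(\Yp,\norm{\cdot}_{\Yp})$.

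The base case $p=0$ is exactly hypothesis \eqref{eq:accuracyu0}, so no work is needed there. For the inductive step, I would assume $\norm{\undu^{(p-1)}-\undu_{ex}^{(p-1)}}_{X^{(p-1)}}=O(\Delta^p)$ and split the target error by triangle inequality as
\begin{equation*}
\norm{\undu^{(p)}-\uexp}_{\Xp} \leq \norm{\undu^{(p)}-\usolp}_{\Xp} + \norm{\usolp-\uexp}_{\Xp}.
\end{equation*}
The second term is already $O(\Delta^{p+1})$ by assumption \eqref{eq:usolp_uexp}, so everything reduces to controlling the distance of the new iterate $\undu^{(p)}$ from the operator solution $\usolp$.

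To bound the first term, I would apply Theorem \ref{th:DeC} \emph{only once} at level $p$: the update rule \eqref{eq:NEWDEC_p_iteration} is exactly one DeC iteration of the pair $(\lopdt^{1,(p)},\lopdt^{2,(p)})$ starting from $\undu^{*(p-1)}$, so the estimate \eqref{eq:DeC_accuracy} with a single iteration yields
\begin{equation*}
\norm{\undu^{(p)}-\usolp}_{\Xp} \leq \Delta\,\frac{\alpha_2^{(p)}}{\alpha_1^{(p)}}\,\norm{\undu^{*(p-1)}-\usolp}_{\Xp}.
\end{equation*}
Hence it suffices to show $\norm{\undu^{*(p-1)}-\usolp}_{\Xp}=O(\Delta^p)$. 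A further triangle inequality splits this as $\norm{\undu^{*(p-1)}-\uexp}_{\Xp}+\norm{\uexp-\usolp}_{\Xp}$; the second piece is $O(\Delta^{p+1})$ by \eqref{eq:usolp_uexp}, and the first piece is controlled by the embedding accuracy \eqref{eq:embE} (applied with index $p-1$) together with the inductive hypothesis, giving $O(\Delta^p)$.

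Combining these bounds yields $\norm{\undu^{(p)}-\usolp}_{\Xp}=\Delta\cdot O(\Delta^p)=O(\Delta^{p+1})$, and adding back the term $\norm{\usolp-\uexp}_{\Xp}=O(\Delta^{p+1})$ closes the induction. The main conceptual point — and the only place where one has to be a bit careful — is realizing that a single DeC iteration at level $p$ is enough (rather than $p$ iterations) precisely because the embedded initial guess $\undu^{*(p-1)}$ already enjoys order $p$ accuracy with respect to $\uexp$ thanks to \eqref{eq:embE} and the inductive hypothesis, so that the extra factor of $\Delta$ from the contraction in \eqref{eq:DeC_accuracy} delivers the required $O(\Delta^{p+1})$ bound.
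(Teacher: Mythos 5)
Your proof is correct and matches the paper's own argument essentially step for step: induction on $p$, a first triangle inequality through $\usolp$, the one-step DeC contraction inequality from the proof of Theorem \ref{th:DeC} (equivalently, \eqref{eq:DeC_accuracy} with one iteration starting from $\undu^{*(p-1)}$), a second triangle inequality through $\uexp$, and finally the embedding bound \eqref{eq:embE} combined with the inductive hypothesis. The only cosmetic difference is your index shift ($p$ versus $p+1$ in the inductive step).
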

\begin{proof}
The proof is based on the induction. The base case for $p=0$ is trivially given by assumption \eqref{eq:accuracyu0}. Let us now focus on the induction step. We assume that \eqref{eq:NEWDEC_accuracy} holds for a specific $p$ and we will prove it for $p+1$.
By the triangular inequality, we have 
\begin{equation}
\norm{\undu^{(p+1)}-\undu_{ex}^{(p+1)}}_{X^{(p+1)}}\leq \norm{\undu^{(p+1)}-\undu_{\Delta}^{(p+1)}}_{X^{(p+1)}}+\norm{\undu_{\Delta}^{(p+1)}-\undu_{ex}^{(p+1)}}_{X^{(p+1)}}.
\end{equation}
The second term at the right hand side is an $O(\Delta^{p+2})$ for \eqref{eq:usolp_uexp}, hence, let us focus on the first term.
By the proof of Theorem \ref{th:DeC} concerning the original methods, we have that
\begin{equation}
\norm{\undu^{(p+1)}-\undu_{\Delta}^{(p+1)}}_{X^{(p+1)}}\leq \Delta \frac{\alpha_{2}^{(p+1)}}{\alpha_{1}^{(p+1)}}\norm{\undu^{*(p)}-\undu_{\Delta}^{(p+1)}}_{X^{(p+1)}},
\end{equation}
which, applying the triangular inequality, gives
\begin{equation}
	\begin{split}
		&\norm{\undu^{(p+1)}-\undu_{\Delta}^{(p+1)}}_{X^{(p+1)}} \\
		\leq&\Delta \frac{\alpha_{2}^{(p+1)}}{\alpha_{1}^{(p+1)}}\left( \norm{\undu^{*(p)}-\undu_{ex}^{(p+1)} }_{X^{(p+1)}}+\norm{   \undu_{ex}^{(p+1)}-\undu_{\Delta}^{(p+1)}}_{X^{(p+1)}} \right).
	\end{split}
\end{equation}
Again, due to \eqref{eq:usolp_uexp}, the second term in parenthesis at the right hand side is $O(\Delta^{p+2})$, therefore, we focus on the first term.
Due to the assumption on the accuracy of the embedding \eqref{eq:embE} and to the induction hypothesis, we have
\begin{equation}
\norm{\undu^{*(p)}-\undu_{ex}^{(p+1)} }_{X^{(p+1)}}\leq C\norm{\undu^{(p)}-\undu_{ex}^{(p)}}_{\Xp} = O(\Delta^{p+1}).
\end{equation}
Hence,
\begin{equation}
\Delta \frac{\alpha_{2}^{(p+1)}}{\alpha_{1}^{(p+1)}}\norm{\undu^{*(p)}-\undu_{ex}^{(p+1)} }_{X^{(p+1)}}=O(\Delta^{p+2}),
\end{equation} 
which completes the proof.
\end{proof}
Let us notice that, in the previous theorem, the accuracy estimate is always referred to a projection of the exact solution and not to a fixed high order approximation. Hence, the order of accuracy is formally not bounded and we can approximate $\uex$ arbitrarily well. 
In particular, if $\uexp$ yields an approximation of $\uex$ which is $O(\Delta^{p+1})$ accurate, thanks to Theorem \ref{th:NEWDEC}, also the approximation associated to $\undu^{(p)}$ will have the same accuracy with respect to $\uex$.

\section{ADER-Discontinuous Galerkin scheme}\label{chap:ADER}
The ADER methods are various techniques to obtain arbitrary high order methods for differential problems. Even though the first ADER \cite{titarev2002ader} was based on the Cauchy-Kovalevskaya theorem, nowadays, it is mainly known as a technique that exploits the weak formulation of the original problem in order to obtain high order discretization forms that are solved iteratively \cite{dumbser2008unified,boscheri2022continuous,han2021dec,gaburro2021unified}. In this section, we will present a formulation for hyperbolic PDEs in combination with a discontinuous Galerkin (DG) space discretization, and we will show how it can be interpreted as an arbitrary high order iterative method in the previously presented DeC framework. We will also describe in a final subsection the \APNPM variant of the method, still recastable as DeC scheme, which allows for applications to finite volume (FV) formulations as well. 

\subsection{Numerical method}
We want to approximate the analytical solution $\uvec{u}: \overline{\Omega} \times \mathbb{R}^+_0 \rightarrow \mathbb{R}^Q$ of the following $Q$-dimensional hyperbolic PDE
\begin{equation}
\label{eq:sys}
\frac{\partial}{\partial t}\uvec{u}(\uvec{x},t)+\mathrm{div}_{\uvec{x}} \boldsymbol{F}(\uvec{u}(\uvec{x},t))=\boldsymbol{S}(\uvec{x},\uvec{u}(\uvec{x},t)), \quad (\uvec{x},t) \in \Omega\times \mathbb{R}^+_0,
\end{equation}
supplemented with suitable initial and boundary conditions, where $\Omega\subseteq \mathbb{R}^D$ is a bounded $D$-dimensional space domain, $\boldsymbol{F}:\R^Q \to \R^{Q\times D}$ is the flux tensor and $\boldsymbol{S}:\overline{\Omega} \times \R^{Q} \to \R^Q$ is the source function. 
To shorten the notation, let us define $\E(\uvec{u},\uvec{x}):=\mathrm{div}_{\uvec{x}} \boldsymbol{F}(\uvec{u})-\boldsymbol{S}(\uvec{x},\uvec{u})$, the time evolution operator of the PDE up to the minus sign, so that \eqref{eq:sys} becomes
\begin{equation}\label{eq:sys_ODE}
	\frac{\partial}{\partial t}\uvec{u}(\uvec{x},t)+\E(\uvec{u}(\uvec{x},t),\uvec{x})=\uvec{0}, \quad (\uvec{x},t) \in \Omega\times \mathbb{R}^+_0.
\end{equation}
Let us focus on a generic time step $[t_n,t_{n+1}]$, with $\Delta t:=t_{n+1}-t_n$. The goal is to find an approximation $\uvec{u}_{n+1}(\uvec{x})\approx \uvec{u}(\uvec{x},t_{n+1})$ of the analytical solution in $\overline{\Omega}$ at time $t_{n+1}$ by knowing an approximation $\uvec{u}_{n}(\uvec{x})\approx \uvec{u}(\uvec{x},t_{n})$ at time $t_n.$ 

In particular, for any $n$, we adopt a classical DG space discretization for $\uvec{u}_{n}(\uvec{x})\approx \uvec{u}(\uvec{x},t_{n})$: we consider a tessellation $\tess$ of $\overline{\Omega}$ made of non-overlapping convex polytopals $K$ with mesh parameter $h$, and we consider $\uvec{u}_n(\uvec{x})$ in a space of discontinuous piecewise polynomial functions of degree $M$, i.e., $(V_M)^Q$ with $V_M:=\left\lbrace g \in L^2(\Omega) ~\text{s.t.}~ g\vert_K \in \mathbb{P}_M(K)\right\rbrace$, yielding an $(M+1)$-th order of accuracy approximation space. Therefore, locally in each element $K$, we can consider the following representation of the approximated solution with a local basis $\left\lbrace \trialspace{i}(\uvec{x}) \right\rbrace_{i=1,\dots,I} $ of $\mathbb{P}_M(K)$
\begin{equation}
\uvec{u}_n(\uvec{x}):=\sum_{i=1}^I \uvec{c}_{i}^n \trialspace{i}(\uvec{x}),\quad \forall \uvec{x}\in K,
\label{eq:reconstruction_at_tn}
\end{equation}
where 
\RIIcolor{$I$ is the number of local basis functions and}
the label $K$ on the coefficients $\uvec{c}_{i}^n$ and on the basis functions $\trialspace{i}$ is omitted to lighten the notation as we will consider computations in a single generic element, in the sequel.

The ADER method, applied to this context, is based on the weak formulation of the governing equations \RIIcolor{\eqref{eq:sys_ODE}} in space-time and it is characterized by two steps: an iterative local space-time predictor and a final corrector step, described hereafter.
\RIcolor{
Before entering the details, it is useful to briefly describe the role of such steps for ADER methods. The predictor, based on a local explicit iterative procedure, is used to compute local high order polynomial approximations of the solution in space-time control volumes without considering any communication between different cells. The solution obtained in this step is high order accurate but not stable, as no upwinding has been taken into account in its computation. This prediction is later used in the corrector step to provide a global explicit update of the numerical solution, allowing communication between neighboring cells through numerical fluxes, which introduce the necessary upwinding and numerical dissipation to achieve stability.
	A stability study of the method can be found in \cite{dumbser2008unified}.
}
\subsubsection{Local space-time predictor}
The purpose of this step is to find a high order approximation of the solution in each space-time control volume $C_K=K\times [t_n,t_{n+1}]$. In this step, no communication  between the cells happens.
We consider the weak formulation of \eqref{eq:sys} over $C_K$, obtained through the multiplication by a smooth test function $\vartheta: C_K \to \mathbb{R}$, the integration over $C_K$ and subsequent integration by parts in time:
\begin{align}
\label{eq:sys_weak}
\begin{split}
&\int_K\!\left[ \uvec{u}(\uvec{x},t_{n+1})\vartheta(\uvec{x},t_{n+1}) \!-\!\uvec{u}(\uvec{x},t_n)\vartheta(\uvec{x},t_{n})\right] d\uvec{x} -\!\! \int_{C_K}\!\!\!\!\! \uvec{u}(\uvec{x},t) \frac{\partial}{\partial t} \vartheta(\uvec{x},t) d\uvec{x}dt\\
& + \int_{C_K}  \E(\uvec{u}(\uvec{x},t),\uvec{x})\vartheta(\uvec{x},t)\, d\uvec{x}dt=\uvec{0}.
\end{split}
\end{align}
Next, we project it onto a finite dimensional space spanned by the tensor product of the previously introduced local spatial basis $\left\lbrace \trialspace{i}(\uvec{x}) \right\rbrace_{i=1,\dots,I}$ and a temporal basis $\left\lbrace \trialtime{m}(t) \right\rbrace_{m=0,\dots,M}$ over $[t_{n},t_{n+1}]$ guaranteeing $(M+1)$-th order of accuracy. 
As an example for the latter, one can think to a Lagrangian basis of degree $M$ or a truncated Taylor series up to the $M$-th degree term to obtain an approximation of order $M+1$. Here, we will use modal basis functions both for space and time, although such choice is not mandatory. The basis functions are explicitly described in Appendix \ref{app:basis_function}. 
As usual in the literature, we assume the basis functions to be normalized in such a way that their maximum absolute value over $C_K$ is an $O(1)$. 

In particular, we consider the local discretization of $\uvec{u}_h$ in $C_K$
\begin{equation}
\label{eq:reconstruction_predictor}
\uvec{u}_h(\uvec{x},t):=\sum_{i=1}^I\sum_{m=0}^M \uvec{u}_i^m\trialspace{i}(\uvec{x})\trialtime{m}(t)=\sum_{\ell=1}^L \uvec{u}^{\ell}\trialspacetime{\ell}(\uvec{x},t), \quad \forall(\uvec{x},t)\in C_K,
\end{equation}
in which, in order to shorten the notation, we have denoted by $\lbrace\trialspacetime{\ell}(\uvec{x},t)\rbrace_{\ell=1,\dots,L}$ a permutation of the basis functions $\lbrace \trialspace{i}(\uvec{x})\trialtime{m}(t) \rbrace_{\substack{i=1,\dots,I\\m=0,\dots,M}}$ and by $\uvec{u}^{\ell}$ the corresponding coefficients $\uvec{u}_i^m$, where implicitly we defined a bijection that gives $\ell = \ell(i,m)$.
Finally, we consider the projection of \eqref{eq:sys_weak} on the space-time DG functional space generated by $\lbrace \trialspacetime{\ell}(\uvec{x},t) \rbrace_{\ell=1,\dots,L}$, that is
\begin{align}
\label{eq:sys_weak_discrete}
\begin{split}
\sum_{\ell=1}^L &\left[ \int_K \trialspacetime{\ell}(\uvec{x},t_{n+1})\trialspacetime{j}(\uvec{x},t_{n+1}) d\uvec{x} - \int_{C_{K}} \trialspacetime{\ell}(\uvec{x},t) \frac{\partial}{\partial t} \trialspacetime{j}(\uvec{x},t) d\uvec{x}dt \right]\uvec{u}^{\ell} \\
-&\int_K \uvec{u}_n(\uvec{x})\trialspacetime{j}(\uvec{x},t_{n}) d\uvec{x} + \int_{C_{K}}\E(\uvec{u}_h(\uvec{x},t),\uvec{x})\trialspacetime{j}(\uvec{x},t) d\uvec{x}dt=\uvec{0}
\end{split}
\end{align}
for any $j=1,\dots,L$. This is a nonlinear system in the unknowns $\uvec{u}^{\ell}$, whose solution yields the $(M+1)$-th order accurate approximation \eqref{eq:reconstruction_predictor} of the analytical solution.
Let us notice that $\uvec{u}_n(\uvec{x})$ in \eqref{eq:sys_weak_discrete} is known by assumption and the related integral involving such function can thus be computed.
Notice that, in order to obtain a fully local formulation, the divergence theorem in space has not been applied.
On the other hand, the integration by parts in time has been performed to introduce a causality effect and a dependency on the initial information at time $t_{n}$.

Now, it is possible to recast each local system \eqref{eq:sys_weak_discrete} in a matrix-vector formulation writing 
\begin{equation}
\label{eq:ADER_predictor_matricial}
B\undu-\undr+\undspacetilde(\undu)=\uvec{0},
\end{equation}
where the matrix $B$ and the vectors $\undu$, $\undr$ and $\undspacetilde$ are given by
{\small
\begin{align}
\label{eq:ADER_predictor_structures}
\begin{split}
B_{j,\ell}&:=\int_K \trialspacetime{\ell}(\uvec{x},t_{n+1})\trialspacetime{j}(\uvec{x},t_{n+1}) d\uvec{x} - \int_{C_K} \trialspacetime{\ell}(\uvec{x},t) \frac{\partial}{\partial t} \trialspacetime{j}(\uvec{x},t) d\uvec{x}dt,\\
\undu &:=\begin{pmatrix}
\uvec{u}^1\\
\vdots\\
\uvec{u}^L
\end{pmatrix}, \quad \undr :=\begin{pmatrix}
\int_K \uvec{u}_n(\uvec{x})\trialspacetime{1}(\uvec{x},t_n)d\uvec{x}\\
\vdots\\
\int_K \uvec{u}_n(\uvec{x})\trialspacetime{L}(\uvec{x},t_n)d\uvec{x}
\end{pmatrix}, \quad \undspacetilde(\undu):= \begin{pmatrix}
\int_{t_n}^{t_{n+1}} \spacestuff_1(\undu,t) dt\\
\vdots\\
\int_{t_n}^{t_{n+1}} \spacestuff_L(\undu,t) dt
\end{pmatrix}
\end{split}
\end{align}}
with $\spacestuff_j(\undu,t):=\int_K\E(\uvec{u}_h(\uvec{x},t),\uvec{x})\trialspacetime{j}(\uvec{x},t) d\uvec{x}$.
Let us observe that $\undr$ is constant and explicitly computable as $\uvec{u}_n(\uvec{x})$ is known.
\begin{remark}{(On the matrix $B$)}\label{rmk:B}
The definition of the matrix $B$ is referred to a scalar PDE, it must be block-expanded for a vectorial problem.
Let us notice that the elements of the matrix $B$ are $O(h^D)$ due to the integral over $K$ and to the normalization assumed on the basis functions. The integral in time on the second term of $B_{j, \ell}$ is balanced by the derivative in time on $\trialspacetime{j}.$
\end{remark}
Concerning the well-posedness and the solution of the nonlinear system \eqref{eq:ADER_predictor_matricial}, we can prove that for $\Delta t$ small enough it admits a unique solution which can be obtained through the iterative procedure
\begin{equation}\label{eq:fixed_point}
\undu^{(p)}=B^{-1}\left[ \undr-\undspacetilde \left(  \undu^{(p-1)} \right) \right],
\end{equation}
which converges unconditionally to the solution of the system, for any initial vector $\undu^{(0)}$.
In order to do that, let us first prove the following useful lemma.
\begin{lemma}[Lipschitz-continuity-like property of $\undspacetilde$]\label{lem:lipschitz_res}
Under smoothness assumptions, the function $\undspacetilde(\cdot)$ is such that 
\begin{equation}
\norm{ \undspacetilde(\undv)-\undspacetilde(\undw) }_\infty \leq \Delta t \abs{ K } C_{Lip} \norm{ \undv-\undw }_\infty
\label{eq:importantpropertyprelimiary}
\end{equation}
where $\norm{\cdot}_{\infty}$ is the infinity norm over $\mathbb{R}^{L\times Q}$ and $C_{Lip}$ is a constant independent of $\Delta t$ and of the element $K.$
\end{lemma}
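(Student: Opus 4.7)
The plan is to bound $\undspacetilde(\undv)-\undspacetilde(\undw)$ component by component, pushing the estimate down to a pointwise Lipschitz bound on $\E$ and then collecting the geometric factors $|K|$ and $\Delta t$ coming from the space-time integrals.

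First I would fix an index $j \in \{1,\dots,L\}$ and write
\begin{equation*}
\bigl|\undspacetilde_j(\undv)-\undspacetilde_j(\undw)\bigr|
\leq \int_{t_n}^{t_{n+1}} \int_K \bigl|\E(\uvec{v}_h(\uvec{x},t),\uvec{x})-\E(\uvec{w}_h(\uvec{x},t),\uvec{x})\bigr|\,\bigl|\trialspacetime{j}(\uvec{x},t)\bigr|\,d\uvec{x}\,dt,
\end{equation*}
where $\uvec{v}_h$ and $\uvec{w}_h$ are the space-time reconstructions \eqref{eq:reconstruction_predictor} associated with the coefficient vectors $\undv$ and $\undw$ respectively. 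The normalization assumption on the basis functions yields $|\trialspacetime{j}(\uvec{x},t)|\leq C_\vartheta$ on $C_K$ for a constant $C_\vartheta$ independent of $\Delta t$ and $K$, so this factor can be pulled out of the integral.

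Next I would invoke the smoothness assumption on $\E$: under smoothness, $\E(\cdot,\uvec{x})$ is locally Lipschitz in its first argument, uniformly in $\uvec{x}$ over the bounded set in which the relevant reconstructions lie, giving a constant $C_\E$ such that $|\E(\uvec{v}_h,\uvec{x})-\E(\uvec{w}_h,\uvec{x})|\leq C_\E |\uvec{v}_h(\uvec{x},t)-\uvec{w}_h(\uvec{x},t)|$. Then, using the linearity of the reconstruction,
\begin{equation*}
\bigl|\uvec{v}_h(\uvec{x},t)-\uvec{w}_h(\uvec{x},t)\bigr|
= \Bigl|\sum_{\ell=1}^{L}(\uvec{v}^{\ell}-\uvec{w}^{\ell})\trialspacetime{\ell}(\uvec{x},t)\Bigr|
\leq L\,C_\vartheta\,\norm{\undv-\undw}_\infty,
\end{equation*}
again by the $O(1)$ normalization of the space-time basis. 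Combining these estimates gives a pointwise bound whose only $\uvec{x}$- and $t$-dependence is through the constants, so integrating over $K\times[t_n,t_{n+1}]$ produces exactly the desired factor $|K|\,\Delta t$, and taking the maximum over $j\in\{1,\dots,L\}$ and over the $Q$ components yields \eqref{eq:importantpropertyprelimiary} with $C_{Lip}:=L\,C_\vartheta^{2}\,C_\E$, independent of $\Delta t$ and $K$.

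The only real subtlety, and the step I would think about most carefully, is the Lipschitz estimate on $\E$ itself: because $\E=\mathrm{div}_{\uvec{x}}\boldsymbol{F}-\boldsymbol{S}$ involves a spatial divergence, when applied to $\uvec{u}_h$ the chain rule produces both pointwise values and spatial gradients of the reconstruction. Under the stated smoothness assumptions on $\boldsymbol{F}$ and $\boldsymbol{S}$, and exploiting that on a fixed element the map $\undv \mapsto (\uvec{v}_h,\nabla_{\uvec{x}}\uvec{v}_h)$ is linear with bounded operator norm (possibly $h$-dependent, which is acceptable since the statement allows the constant to depend on the mesh but not on $\Delta t$ or on the particular element $K$), one still obtains a bound of the form $|\E(\uvec{v}_h,\uvec{x})-\E(\uvec{w}_h,\uvec{x})|\leq C_\E\,\norm{\undv-\undw}_\infty$. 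Plugging this into the chain above removes the intermediate step and gives the same conclusion directly.
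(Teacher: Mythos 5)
Your proof follows the same skeleton as the paper's: bound the $j$-th component of $\undspacetilde(\undv)-\undspacetilde(\undw)$ by an integral over $C_K$ of the increment of $\E$ times $|\trialspacetime{j}|$, pull out the $O(1)$ basis bound $C_\vartheta$, and read off $\Delta t\,|K|$ as the measure of the space-time control volume. The difference is in where the Lipschitz input comes from. The paper simply \emph{posits} the coefficient-level bound $\norm{\E(\uvec v_h,\cdot)-\E(\uvec w_h,\cdot)}_{\infty,Q}\leq C_0\norm{\undv-\undw}_\infty$ as a smoothness assumption and stops. You instead try to derive it from a \emph{pointwise} Lipschitz estimate $|\E(\uvec v_h,\uvec x)-\E(\uvec w_h,\uvec x)|\leq C_\E\,|\uvec v_h(\uvec x,t)-\uvec w_h(\uvec x,t)|$ followed by $|\uvec v_h-\uvec w_h|\leq L\,C_\vartheta\norm{\undv-\undw}_\infty$. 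As you yourself then note, that pointwise form is not actually available: $\E=\mathrm{div}_{\uvec x}\boldsymbol F-\boldsymbol S$ depends on $\nabla_{\uvec x}\uvec u_h$, so the increment of $\E$ at a point is controlled by the difference of values \emph{and gradients} of the reconstructions, not of the values alone.

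The part that does not hold up is your proposed repair. You allow the operator norm of $\undv\mapsto(\uvec v_h,\nabla_{\uvec x}\uvec v_h)$ to be $h$-dependent and claim this is acceptable because the lemma only requires independence of $\Delta t$ and of $K$; but ``independent of the element $K$'' is meant to include independence of $h_K$, and this matters downstream. In Corollary~\ref{cor:lipschitz} the $h^{-D}$ from $\norm{B^{-1}}_\infty$ is cancelled exactly by $|K|=O(h^D)$ so that $\widetilde{C}_{Lip}=O(1)$, and in Theorem~\ref{th:DeC} the contraction factor $\Delta t\,\widetilde{C}_{Lip}$ must be $O(\Delta t)$ for the scheme to gain one order per iteration. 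With the normalized Taylor basis of Appendix~\ref{app:basis_function}, $\nabla_{\uvec x}\vartheta^\ell=O(h_K^{-1})$, so an honest accounting of the divergence term would feed an extra $h^{-1}$ into $C_{Lip}$, leaving $\Delta t\,\widetilde{C}_{Lip}=O(\Delta t/h)$, which is only $O(1)$ under a CFL relation. The paper sidesteps this by taking the coefficient-level Lipschitz bound as a hypothesis (``for regular data, we can assume\dots'') rather than proving it; your derivation correctly exposes where the hypothesis hides the difficulty, but the $h$-dependence escape hatch you invoke does not close the gap.
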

\begin{proof}
By a direct computation of the generic $j$-th component of the left-hand side of \eqref{eq:importantpropertyprelimiary}, recalling the definition of the functions $\spacestuff_j$, through basic analysis, we get
\begin{align}
\begin{split}
&\Bigg\Vert \int_{t_n}^{t_{n+1}} \spacestuff_j(\undv,t)dt -\!\!\int_{t_n}^{t_{n+1}} \spacestuff_j(\undw,t) dt \Bigg\Vert_{\infty,Q}  \!\!\!\!\leq \int_{t_n}^{t_{n+1}} \Big\Vert  \spacestuff_j(\undv,t) - \spacestuff_j(\undw,t)  \Big\Vert_{\infty,Q} \!\!\! dt\\
&\leq\int_{C_K} \Big\Vert  \E(\uvec{v}_h(\uvec{x},t),\uvec{x}) -\E(\uvec{w}_h(\uvec{x},t),\uvec{x})  \Big\Vert_{\infty,Q} \Big\vert  \trialspacetime{j}(\uvec{x},t) \Big\vert d\uvec{x} dt,
\end{split}
\label{eq:randomname}
\end{align}
where $\norm{\cdot}_{\infty,Q}$ is the infinity norm over $\mathbb{R}^Q$, $\uvec{v}_h(\uvec{x},t):=\sum_{\ell=1}^L \uvec{v}^{\ell}\trialspacetime{\ell}(\uvec{x},t)$ and $\uvec{w}_h(\uvec{x},t):=\sum_{\ell=1}^L \uvec{w}^{\ell}\trialspacetime{\ell}(\uvec{x},t) \quad \forall(\uvec{x},t)\in C_K.$
For regular data, we can assume that the following Lipschitz-continuity property holds
\begin{align}
\Big\Vert   \E(\uvec{v}_h(\uvec{x},t),\uvec{x}) -\E(\uvec{w}_h(\uvec{x},t),\uvec{x}) \Big\Vert_{\infty,Q} \leq C_{0} \norm{ \undv-\undw }_\infty,
\end{align}
where $C_0$ is a constant independent of $\Delta t$ and of the element $K$, leading to
\begin{align}
\begin{split}
&\Bigg\Vert \int_{t_n}^{t_{n+1}}\!\! \spacestuff_j(\undv,t)dt -\!\!\int_{t_n}^{t_{n+1}}\!\! \spacestuff_j(\undw,t) dt \Bigg\Vert_{\infty,Q} \!\!\!\!\leq C_{0} \norm{ \undv-\undw }_\infty \int_{C_K} \Big\vert  \trialspacetime{j}(\uvec{x},t) \Big\vert d\uvec{x} dt.
\end{split}
\end{align}
The space-time basis functions $\trialspacetime{j}$ are bounded in absolute value by a constant $C_\vartheta$ independent of $\Delta t$ and $K$, yielding 
\begin{align}
\begin{split}
&\Bigg\Vert \int_{t_n}^{t_{n+1}} \spacestuff_j(\undv,t)dt -\int_{t_n}^{t_{n+1}} \spacestuff_j(\undw,t) dt \Bigg\Vert_{\infty,Q}\leq C_{0}C_\vartheta \norm{ \undv-\undw }_\infty \Delta t \abs{K},
\end{split}
\end{align}
which, setting $C_{Lip}:=C_{0}C_\vartheta$ and taking the maximum over $j=1,\dots,L$ at the left-hand side, is the thesis.
\end{proof}
A straightforward consequence of the previous result is the following corollary.
\begin{corollary}[Lipschitz-continuity-like property of $B^{-1}\undspacetilde$]\label{cor:lipschitz}
Under the assumptions of the previous lemma, it holds
\begin{equation}
\norm{ B^{-1}\left[\undspacetilde(\undv)-\undspacetilde(\undw)\right] }_\infty \leq \Delta t \widetilde{C}_{Lip} \norm{ \undv-\undw }_\infty
\label{eq:importantproperty}
\end{equation}
where $\widetilde{C}_{Lip}$ is a constant independent of $\Delta t$ and of the element $K.$
\end{corollary}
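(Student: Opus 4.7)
The plan is to derive the bound by combining the previous lemma with an estimate on $\norm{B^{-1}}_\infty$. Specifically, I would use the submultiplicativity of the induced infinity norm to write
\begin{equation}
\norm{ B^{-1}\left[\undspacetilde(\undv)-\undspacetilde(\undw)\right] }_\infty \leq \norm{B^{-1}}_\infty \, \norm{\undspacetilde(\undv)-\undspacetilde(\undw)}_\infty,
\end{equation}
and then invoke \eqref{eq:importantpropertyprelimiary} from Lemma \ref{lem:lipschitz_res} to replace the second factor by $\Delta t\,|K|\,C_{Lip}\,\norm{\undv-\undw}_\infty$. The whole result then reduces to showing that $\norm{B^{-1}}_\infty \leq \widehat{C}/|K|$ for some constant $\widehat{C}$ independent of $\Delta t$ and $K$, so that the factor $|K|$ is absorbed.

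To obtain this scaling of $\norm{B^{-1}}_\infty$, I would rely on Remark \ref{rmk:B}: the entries of $B$ are $O(h^D) = O(|K|)$ and, crucially, the bound is independent of $\Delta t$ because the temporal integral in the second term of $B_{j,\ell}$ is compensated by the temporal derivative on $\trialspacetime{j}$. This suggests the factorization $B = |K|\, \widehat{B}$, where $\widehat{B}$ is a reference matrix associated with a unit-size cell and an $O(1)$ time interval, with entries bounded uniformly in $\Delta t$ and $K$. Since $\widehat{B}$ is invertible (it corresponds to the well-posed reference predictor system), its inverse satisfies $\norm{\widehat{B}^{-1}}_\infty \leq \widehat{C}$ independently of $\Delta t$ and $K$, and therefore $\norm{B^{-1}}_\infty = \norm{\widehat{B}^{-1}}_\infty/|K| \leq \widehat{C}/|K|$.

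Putting the pieces together gives
\begin{equation}
\norm{ B^{-1}\left[\undspacetilde(\undv)-\undspacetilde(\undw)\right] }_\infty \leq \frac{\widehat{C}}{|K|}\,\Delta t\,|K|\,C_{Lip}\,\norm{\undv-\undw}_\infty = \Delta t\, \widetilde{C}_{Lip}\,\norm{\undv-\undw}_\infty,
\end{equation}
with $\widetilde{C}_{Lip} := \widehat{C}\, C_{Lip}$, which is the desired estimate. For the vectorial case one argues component-wise, exploiting the block-diagonal structure mentioned in Remark \ref{rmk:B}.

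The main obstacle is the justification of the scaling $\norm{B^{-1}}_\infty = O(1/|K|)$ uniformly in $\Delta t$ and $K$. While Remark \ref{rmk:B} makes this plausible by observing the $O(h^D)$ size of $B$'s entries together with the $\Delta t$-independence of the integration-by-parts term, a fully rigorous statement would require a reference-element argument with shape-regularity assumptions on the tessellation and a precise normalization of the basis functions, so that the reference matrix $\widehat{B}$ has a condition number bounded uniformly in the mesh and in $\Delta t$.
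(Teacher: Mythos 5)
Your proposal matches the paper's proof in all essentials: both start from the submultiplicativity bound $\norm{B^{-1}[\undspacetilde(\undv)-\undspacetilde(\undw)]}_\infty \leq \norm{B^{-1}}_\infty\norm{\undspacetilde(\undv)-\undspacetilde(\undw)}_\infty$, both invoke Lemma \ref{lem:lipschitz_res} for the second factor, and both use the scaling of $B$ from Remark \ref{rmk:B} to argue $\norm{B^{-1}}_\infty = O(h^{-D})$ so that the $|K|$ cancels. The only cosmetic difference is that the paper writes $\norm{B^{-1}}_\infty \leq C_B h^{-D}$ and then invokes shape regularity ($|K| \leq C_\tau h^D$) to absorb $|K|$, whereas you write the bound directly as $\widehat{C}/|K|$; your closing caveat about needing a reference-element argument with shape-regularity to make the scaling of $B^{-1}$ fully rigorous is a fair observation, and the paper is indeed somewhat terse on that point.
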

\begin{proof}
By basic linear algebra we have 
\begin{align}
\norm{ B^{-1}\left[\undspacetilde(\undv)-\undspacetilde(\undw)\right] }_\infty \leq \norm{ B^{-1}}_\infty \norm{ \undspacetilde(\undv)-\undspacetilde(\undw) }_\infty
\label{eq:intermediateneeded}
\end{align}
where the infinity norm applied to $B^{-1}$ is the matrix norm induced by the related vector norm. 
As observed in Remark \ref{rmk:B}, $B$ is an $O(h^D)$ and, hence, its inverse is an $O(h^{-D})$, leading to $\norm{ B^{-1}}_\infty\leq C_B h^{-D}$ for some constant $C_B$ independent of the specific element $K$ and of $\Delta t$. 
Using this fact, in combination with the result of Lemma \ref{lem:lipschitz_res}, we obtain
\begin{align}
\norm{ B^{-1}\left[\undspacetilde(\undv)-\undspacetilde(\undw)\right] }_\infty \leq C_B h^{-D} \Delta t \abs{ K } C_{Lip} \norm{ \undv-\undw }_\infty. 
\end{align}
By observing that for a regular mesh $\abs{K}\leq C_\tau h^D$ for some constant $C_\tau$ independent of $K$, we get the thesis 
\begin{align}
\norm{ B^{-1}\left[\undspacetilde(\undv)-\undspacetilde(\undw)\right] }_\infty 
&\leq  \Delta t C_B  C_\tau C_{Lip} \norm{ \undv-\undw }_\infty  
\end{align}
for $\widetilde{C}_{Lip}:=C_B  C_\tau C_{Lip}.$
\end{proof}

This allows us to prove the existence and uniqueness of the solution of \eqref{eq:ADER_predictor_matricial}.
\begin{proposition}[Well-posedness and solution of the nonlinear system]\label{prop:solutiontononlinearsystem}
For $\Delta t$ small enough, the nonlinear system \eqref{eq:ADER_predictor_matricial} has a unique solution, which is the limit of \eqref{eq:fixed_point} for $p\rightarrow+\infty$.
\end{proposition}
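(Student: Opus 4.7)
The plan is to recast the nonlinear system \eqref{eq:ADER_predictor_matricial} as a fixed-point problem and apply the Banach fixed-point theorem, using the Lipschitz-continuity-like estimate established in Corollary \ref{cor:lipschitz} as the key ingredient.

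First, I would observe that $B$ is invertible (for instance, by checking it on a scalar PDE via the explicit form of the basis functions; Remark \ref{rmk:B} already gives the order $h^D$ of its entries and implies the existence of $B^{-1}$ with the control $\norm{B^{-1}}_\infty \leq C_B h^{-D}$ used in Corollary \ref{cor:lipschitz}). Under this invertibility, the system $B\undu-\undr+\undspacetilde(\undu)=\uvec{0}$ is equivalent to the fixed-point equation $\undu = T(\undu)$ with
\begin{equation*}
T(\undu) := B^{-1}\left[\undr-\undspacetilde(\undu)\right],
\end{equation*}
so that the iterative procedure \eqref{eq:fixed_point} is exactly $\undu^{(p)}=T(\undu^{(p-1)})$.

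Next, I would prove that $T$ is a contraction on the complete normed space $(\mathbb{R}^{L\times Q},\norm{\cdot}_\infty)$ provided $\Delta t$ is small enough. Indeed, for any $\undv,\undw\in \mathbb{R}^{L\times Q}$, since the constant vector $B^{-1}\undr$ cancels in the difference,
\begin{equation*}
\norm{T(\undv)-T(\undw)}_\infty = \norm{B^{-1}\left[\undspacetilde(\undw)-\undspacetilde(\undv)\right]}_\infty \leq \Delta t\,\widetilde{C}_{Lip}\,\norm{\undv-\undw}_\infty,
\end{equation*}
by Corollary \ref{cor:lipschitz}. Choosing $\Delta t<1/\widetilde{C}_{Lip}$, the contraction constant $\Delta t\,\widetilde{C}_{Lip}$ is strictly less than $1$.

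Finally, I would invoke the Banach fixed-point theorem to conclude that $T$ has a unique fixed point $\undu^\star\in \mathbb{R}^{L\times Q}$, and that the sequence defined by \eqref{eq:fixed_point} converges to $\undu^\star$ for any initial guess $\undu^{(0)}$, with the usual a priori bound $\norm{\undu^{(p)}-\undu^\star}_\infty \leq (\Delta t\,\widetilde{C}_{Lip})^p \norm{\undu^{(0)}-\undu^\star}_\infty$. The main (mild) obstacle is ensuring the smoothness assumptions of Lemma \ref{lem:lipschitz_res} and the invertibility of $B$ are in force; both are already embedded in the hypotheses carried over from the previous results, so the argument reduces to a clean application of the contraction mapping principle.
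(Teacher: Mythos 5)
Your proposal is correct and follows essentially the same approach as the paper's proof: recast \eqref{eq:ADER_predictor_matricial} as a fixed-point problem for the map $\undu \mapsto B^{-1}[\undr - \undspacetilde(\undu)]$, show it is a contraction on $(\mathbb{R}^{L\times Q}, \norm{\cdot}_\infty)$ via Corollary \ref{cor:lipschitz} for $\Delta t < 1/\widetilde{C}_{Lip}$, and conclude by the Banach fixed-point theorem. The only minor addition is your explicit remark on the invertibility of $B$, which the paper takes for granted via Remark \ref{rmk:B}.
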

\begin{proof}
We define the map $\mathcal{J}:\mathbb{R}^{L\times Q}\rightarrow\mathbb{R}^{L\times Q}$ as 
$ \mathcal{J}(\undu):=B^{-1}\left[ \undr-\undspacetilde (  \undu ) \right].$
It is immediate to verify that a fixed point of $\mathcal{J}$ (if any) is also a solution of \eqref{eq:ADER_predictor_matricial} and viceversa.
Due to the fact that $\mathbb{R}^{L\times Q}$ is finite dimensional, if we are able to prove that $\mathcal{J}$ is a contraction, by the Banach fixed-point theorem, we know that there exists a unique fixed point and that this can be obtained as the limit of the iterative procedure $\undu^{(p)}:=\mathcal{J}(\undu^{(p-1)})$, which is equivalent to \eqref{eq:fixed_point}. We will now show that, for $\Delta t$ small enough, $\mathcal{J}$ is indeed a contraction. 
In fact, by a direct computation, it holds
\begin{align}
\norm{\mathcal{J}(\undv)-\mathcal{J}(\undw)}_\infty
=\norm{B^{-1}\left[ \undspacetilde \left(  \undv \right) -\undspacetilde \left(  \undw \right) \right] }_\infty
\end{align}
and, applying Corollary \ref{cor:lipschitz} on the Lipschitz-continuity-like property of $B^{-1}\undspacetilde$, we retrieve the thesis 
\begin{align}
\begin{split}
\norm{\mathcal{J}(\undv)-\mathcal{J}(\undw)}_\infty = \norm{B^{-1}\left[ \undspacetilde \left(  \undv \right) -\undspacetilde \left(  \undw \right) \right] }_\infty\leq \Delta t \widetilde{C}_{Lip} \norm{ \undv-\undw }_\infty
\end{split}
\end{align}
for $\Delta t<\frac{1}{\widetilde{C}_{Lip}}.$
\end{proof}

All the local approximations, obtained by solving the nonlinear system in each control volume $C_K$, constitute a global $(M+1)$-th order accurate approximation of the analytical solution. It is piecewise polynomial in each $C_K$ and discontinuous across the faces of $C_K$ shared with other control volumes and we denote it, by an abuse of notation, as $\uvec{u}_h$. 

\begin{remark}{(On the computational efficiency)}
In several works, the nonlinear system \eqref{eq:ADER_predictor_matricial} is solved by carrying the iterative process \eqref{eq:fixed_point} until a convergence criterion is met up to a certain tolerance \cite{dumbser2008unified,boscheri2022continuous,gaburro2021unified}. 
This leads to a waste of resources as the underlying discretization error of the system \eqref{eq:ADER_predictor_matricial} with respect to the analytical solution of the PDE is of order $M+1$, hence, smaller tolerances are in general unnecessary. 
In this context, it is possible to obtain an $(M+1)$-th order accurate approximation of the solution of \eqref{eq:ADER_predictor_matricial} by performing exactly $M+1$ iterations. More details on this will be explained in Section \ref{chap:ADERasDeC}.
\end{remark}

\subsubsection{Final corrector step}
From the predictor step, we have in each control volume $C_K$ a local $(M+1)$-th order accurate approximation $\uvec{u}_h$ of the analytical solution in the form \eqref{eq:reconstruction_predictor}, which has been computed without considering any sort of communication between the neighboring elements. 
In the final corrector step, we exploit such approximation to finally get $\uvec{u}_{n+1}(\uvec{x}),$ taking into account the coupling between the elements.
In particular, we consider again a weak formulation of \eqref{eq:sys} in $C_K$, but this time we use a spatial-only test function $\varphi(\uvec{x})$ and we apply the divergence theorem in space thus getting
\begin{equation}
\label{eq:sys_weak_corrector}
\begin{split}
&\int_K \uvec{u}(\uvec{x},t_{n+1})\varphi(\uvec{x}) d\uvec{x} -\int_K \uvec{u}(\uvec{x},t_n)\varphi(\uvec{x}) d\uvec{x} \\
+& \int_{t_n}^{t_{n+1}}\!\! \int_{\partial K} \varphi(\uvec{x}) \uvec{F}(\uvec{u}(\uvec{x},t))   \cdot \uvec{\nu}(\uvec{x}) d \uvec{\sigma} dt  \\
- & \int_{C_K}  \uvec{F}(\uvec{u}(\uvec{x},t)) \cdot \nabla_{\uvec{x}} \varphi(\uvec{x})  d\uvec{x}dt-\int_{C_K} \boldsymbol{S}(\uvec{x},\uvec{u}(\uvec{x},t)) \varphi(\uvec{x}) d\uvec{x}dt=\uvec{0},
\end{split}
\end{equation}
where $\uvec{\nu}(\uvec{x})$ is the outward pointing normal to $\partial K$.
The divergence theorem in space provides the desired coupling between the neighboring cells because the solution $\uvec{u}_h$, computed locally in each control volume $C_K$ through the predictor, is discontinuous across the boundaries $\partial K$ and, thus, a numerical flux $\widehat{\uvec{F}}$ is needed to compute the flux at the cell interfaces $\partial K$. We can use either a simple and robust local Lax Friedrichs scheme \cite{Rusanov:1961a}, or a less dissipative Osher numerical flux function \cite{OsherNC}.
At the discrete level, recalling the adopted discretization \eqref{eq:reconstruction_at_tn} for $\uvec{u}_n(\uvec{x})$ $\forall n$, we get for each control volume $C_K$
\begin{align}
\label{eq:sys_weak_corrector_discrete}
\begin{split}
&\sum_{i=1}^I \int_K \trialspace{i}(\uvec{x})\trialspace{j}(\uvec{x}) d\uvec{x} (\uvec{c}_i^{n+1}-\uvec{c}_i^{n}) \\
+& \int_{t_n}^{t_{n+1}} \int_{\partial K} \trialspace{j}(\uvec{x}) \widehat{\uvec{F}}(\uvec{u}_h\vert_K(\uvec{x},t),\uvec{u}_h\vert_{K^+}(\uvec{x},t)) \cdot \uvec{\nu}(\uvec{x}) d \uvec{\sigma} dt\\
  - &\int_{C_K}  \uvec{F}(\uvec{u}_h(\uvec{x},t)) \cdot \nabla_{\uvec{x}} \trialspace{j}(\uvec{x})  d\uvec{x}dt
 - \int_{C_K} \boldsymbol{S}(\uvec{x},\uvec{u}_h(\uvec{x},t)) \trialspace{j}(\uvec{x}) d\uvec{x}dt=\uvec{0}
\end{split}
\end{align}
for every $j=1,\dots,I$, with $K^+$ being the neighboring cell of $K$ sharing $\partial K$ at a certain point $\uvec{x}$. Again, we remark that this step has a global character due to the computation of the numerical fluxes, but it is explicit as $\uvec{u}_h$ has been obtained in the predictor step. Let us notice that the linear systems involved in the corrector are local and even smaller than the predictor ones, thus readily invertible.

By solving the linear system \eqref{eq:sys_weak_corrector_discrete} with respect to the coefficients $\uvec{c}_i^{n+1}$ in each element $K$, we get the final solution $\uvec{u}_{n+1}(\uvec{x})=\sum_{i=1}^I \uvec{c}_i^{n+1} \trialspace{i}(\uvec{x})$ $\forall \uvec{x}\in K$ which is an $(M+1)$-th order accurate approximation of $\uvec{u}(\uvec{x},t_{n+1}).$

\subsection{ADER-DG as DeC}\label{chap:ADERasDeC}
It is possible to interpret the ADER-DG predictor step as a DeC procedure. We set $\Delta:=\Delta t$ and, from the local space-time nonlinear system \eqref{eq:ADER_predictor_matricial}, we define the high order nonlinear operator $\lopdt^2:\mathbb{R}^{L \times Q} \rightarrow \mathbb{R}^{L \times Q}$ as
\begin{align}
\label{eq:ADER_l2}
\lopdt^2(\undu):=\undu-B^{-1}\left[\undr-\undspacetilde(\undu)\right].
\end{align}
Since solving the operator $\lopdt^2$ is equivalent to solve the system \eqref{eq:ADER_predictor_matricial}, we have already discussed the $(M+1)$-th order of accuracy of its solution.

The low order operator $\lopdt^1:\mathbb{R}^{L \times Q} \rightarrow \mathbb{R}^{L \times Q}$ is, instead, defined as
\begin{align}
\label{eq:ADER_l1}
\lopdt^1(\undu):=\undu-B^{-1}\left[\undr-\undspacetilde(\undu_0)\right],
\end{align}
with $\undu_0$ being a vector of local space-time representation coefficients with respect to the basis $\lbrace\trialspacetime{\ell}(\uvec{x},t)\rbrace_{\ell=1,\dots,L}$, yielding an $O(\Delta t)$-approximation of the analytical solution in $C_K$. As an example, 
the vector $\undu_0$ can be chosen such that $\sum_{\ell=1}^L \uvec{u}_0^\ell \vartheta^\ell(\uvec{x},t) = \uvec{u}_{n}(\uvec{x})$ for all $t \in [t_n,t_{n+1}]$.
In practice, this definition is merely formal as the related terms will cancel out in the iteration and in all the needed proofs. 
It can be shown that the local reconstruction of the PDE solution induced by the coefficients obtained by solving $\lopdt^1$ is first order accurate with respect to the analytical solution. Furthermore, let us observe how the problem $\lopdt^1(\widetilde{\undu})=\undz$ for some given $\undz \in \mathbb{R}^{L \times Q}$ can be easily solved by explicitly isolating $\widetilde{\undu}$.

In the following, we will prove that the operators that we have defined respect the three properties needed to apply Theorem \ref{th:DeC}, but first let us characterize the related DeC iterative procedure.
\subsubsection{Iterative ADER-DG-DeC procedure}
If we characterize the iterative procedure \eqref{eq:DeC_iteration} in the ADER context with the operators \eqref{eq:ADER_l1} and \eqref{eq:ADER_l2}, by a direct computation, we get  
\begin{equation}
	\begin{split}
\undu^{(p)}-B^{-1}\left[\undr-\undspacetilde(\undu_0)\right]&=\undu^{(p-1)}-B^{-1}\left[\undr-\undspacetilde(\undu_0)\right]\\
&-\undu^{(p-1)}+B^{-1}\left[\undr-\undspacetilde (\undu^{(p-1)})\right],
	\end{split}
\end{equation}
which reduces to 
\begin{equation}
\undu^{(p)}=B^{-1}\left[\undr-\undspacetilde(\undu^{(p-1)})\right].
\label{eq:ADERDEC_iteration}
\end{equation}
This is nothing but the fixed point iteration \eqref{eq:fixed_point}. The advantage of having put it into a DeC formulation is that, in this context, we have at our disposal an estimate for the accuracy of $\undu^{(p)}$ obtained at the generic iteration $p$ given by \eqref{eq:DeC_accuracy}. In particular, according to Remark \ref{rmk:P}, if $\undu^{(0)}$ yields an $O(\Delta t)$-approximation of the analytical solution, we have that the optimal number of iterations to achieve the formal accuracy is given by $P=M+1.$ A natural choice of the initial vector is thus $\undu^{(0)}:=\undu_0.$

\subsubsection{Proof of the properties of the operators $\lopdt^1,\lopdt^2$}
We have that the operators $\lopdt^1,\lopdt^2$ fulfill the hypotheses of Theorem \ref{th:DeC} as stated in the next theorem.
\begin{theorem}[ADER-DG is DeC]
The operators $\lopdt^1,\lopdt^2:\mathbb{R}^{L \times Q} \rightarrow \mathbb{R}^{L \times Q}$, defined respectively in \eqref{eq:ADER_l1} and \eqref{eq:ADER_l2}, fulfill the three hypotheses of Theorem \ref{th:DeC}.
\end{theorem}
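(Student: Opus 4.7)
The proof plan is to verify the three hypotheses of Theorem~\ref{th:DeC} one by one, exploiting the fact that each of them reduces almost immediately to a result already established earlier in the paper. The norm to use is the infinity norm on $\mathbb{R}^{L\times Q}$ (consistent with Lemma~\ref{lem:lipschitz_res} and Corollary~\ref{cor:lipschitz}), and $\Delta = \Delta t$.

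For the first hypothesis (existence and uniqueness of a solution to $\lopdt^2$), I would simply observe that $\lopdt^2(\undu)=\uvec{0}$ if and only if $\undu = B^{-1}[\undr-\undspacetilde(\undu)]$, i.e., if and only if $\undu$ solves the nonlinear system \eqref{eq:ADER_predictor_matricial}. Existence and uniqueness for $\Delta t$ small enough is then exactly the content of Proposition~\ref{prop:solutiontononlinearsystem}.

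For the coercivity-like property of $\lopdt^1$, the key observation is that the term $B^{-1}[\undr-\undspacetilde(\undu_0)]$ in \eqref{eq:ADER_l1} does not depend on the argument $\undu$, so $\lopdt^1$ is an affine map with identity linear part. A direct subtraction then yields
\begin{equation}
\lopdt^1(\undv)-\lopdt^1(\undw)=\undv-\undw \qquad \forall \undv,\undw\in\mathbb{R}^{L\times Q},
\end{equation}
so property 2 holds with $\alpha_1=1$ (independent of $\Delta t$).

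For the Lipschitz-like property of $\lopdt^1-\lopdt^2$, again by direct computation, the $\undu_0$-dependent terms and the $\undr$ term cancel, leaving
\begin{equation}
\lopdt^1(\undu)-\lopdt^2(\undu)=B^{-1}\left[\undspacetilde(\undu)-\undspacetilde(\undu_0)\right].
\end{equation}
Taking the difference at two arguments $\undv,\undw$, the $\undu_0$ contribution disappears and we obtain
\begin{equation}
[\lopdt^1(\undv)-\lopdt^2(\undv)]-[\lopdt^1(\undw)-\lopdt^2(\undw)] = B^{-1}\left[\undspacetilde(\undv)-\undspacetilde(\undw)\right],
\end{equation}
whose infinity norm is bounded by $\Delta t\,\widetilde{C}_{Lip}\norm{\undv-\undw}_\infty$ thanks to Corollary~\ref{cor:lipschitz}. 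This gives property 3 with $\alpha_2=\widetilde{C}_{Lip}$, again independent of $\Delta t$.

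There is really no hard step here: the crucial analytical work has already been done in Lemma~\ref{lem:lipschitz_res} and Corollary~\ref{cor:lipschitz}, and the algebraic structure of $\lopdt^1$ (affine with identity linear part) makes property 2 trivial. The only thing to be careful about is to state explicitly that the definition $\lopdt^1(\undu):=\undu-B^{-1}[\undr-\undspacetilde(\undu_0)]$ uses $\undu_0$ as a fixed reference vector (not the running argument), which is what ensures the clean cancellation in both property 2 and property 3.
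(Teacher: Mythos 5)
Your proposal is correct and matches the paper's proof essentially step by step: reduction of hypothesis~1 to Proposition~\ref{prop:solutiontononlinearsystem}, the observation that $\lopdt^1$ is affine with identity linear part giving $\alpha_1=1$, and the cancellation of the $\undr$ and $\undu_0$ terms reducing hypothesis~3 to Corollary~\ref{cor:lipschitz} with $\alpha_2=\widetilde{C}_{Lip}$. (There is a harmless sign flip in your intermediate expression $\lopdt^1(\undu)-\lopdt^2(\undu)=B^{-1}[\undspacetilde(\undu_0)-\undspacetilde(\undu)]$, but this of course does not affect the norm estimate.)
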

\begin{proof}
\begin{enumerate}
\item \textbf{Existence of a unique solution to $\lopd^2$} \\
This property has been already proved in Proposition \ref{prop:solutiontononlinearsystem}, since solving the operator $\lopdt^2$ is equivalent to solve the nonlinear system \eqref{eq:ADER_predictor_matricial}.
\item \textbf{Coercivity-like property of $\lopd^1$} \\
We consider the infinity norm over $\mathbb{R}^{L \times Q}$ and two general vectors $\undv,\undw \in \mathbb{R}^{L \times Q}$.
The proof of this property is immediate because, by a direct computation, we have
\begin{align}
	\begin{split}
		\norm{\lopd^1(\underline{\uvec{v}})-\lopd^1(\underline{\uvec{w}})}_\infty 
		&=\norm{\undv- \undw}_\infty
	\end{split}
\end{align}
and, thus, \eqref{eq:DeC_coercivity} holds with $\alpha_1=1.$
\item \textbf{Lipschitz-continuity-like property of $\lopd^1-\lopd^2$} \\
The proof of this property is based on Corollary \ref{cor:lipschitz}. A direct computation leads to the thesis:
\begin{align}
	&\norm{\left[\lopd^1(\underline{\uvec{v}})\!-\!\lopd^2(\underline{\uvec{v}})\right]\!-\!\left[\lopd^1(\underline{\uvec{w}})\!-\!\lopd^2(\underline{\uvec{w}})\right]}_\infty \\
	&=\norm{B^{-1}\left[ \undspacetilde \left(  \undv \right) -\undspacetilde \left(  \undw \right) \right] }_\infty\leq \Delta t \widetilde{C}_{Lip} \norm{ \undv-\undw }_\infty, \label{eq:proof_lip_ADERDEC}
\end{align}
where in \eqref{eq:proof_lip_ADERDEC} we applied Corollary \ref{cor:lipschitz}. 
\end{enumerate}
\end{proof}

\subsection{\APNPM and ADER-FV}

Other formulations of ADER are available in literature, in particular \APNPM \cite{gaburro2021posteriori,dumbser2009very,dumbser2010arbitrary,boscheri2019high} is a generalization of the ADER-DG formulation.
The \APNPM method is based on adopting, for the discretization $\uvec{u}_n(\uvec{x})$ of the solution at the time $t_n$, different local basis functions $\left\lbrace \trialspaceFVc{r} \right\rbrace_{r=1,\dots,R}$ spanning a space of discontinuous piecewise polynomial functions of degree $N\leq M$, i.e., $V_N$ with $V_N:=\left\lbrace g \in L^2(\Omega) ~\text{s.t.}~ g\vert_K \in \mathbb{P}_N(K)\right\rbrace$, yielding the reconstruction
\begin{equation}
\uvec{u}_n(\uvec{x}):=\sum_{r=1}^R \uvec{c}_{r}^n \trialspaceFVc{r}(\uvec{x}),\quad \forall \uvec{x}\in K.
\label{eq:reconstruction_at_tn_bis}
\end{equation}
Then, the scheme is formally identical to the one described before. 
In the predictor \eqref{eq:sys_weak_discrete}, the same $M$-th degree local spatial bases $\left\lbrace \trialspace{i}(\uvec{x}) \right\rbrace_{i=1,\dots,I}$ and temporal bases $\left\lbrace \trialtime{m}(t) \right\rbrace_{m=0,\dots,M}$ are considered, yielding a local reconstruction $\uvec{u}_h(\uvec{x},t)$ in each $C_K$ guaranteeing $(M+1)$-th order of accuracy. 
The corrector is also identical to the one previously described \eqref{eq:sys_weak_corrector_discrete}, up to the replacement of the basis functions $\trialspace{i}$ with the basis functions $\trialspaceFVc{r}$.

The only difference with respect to the original formulation is given by the fact that, if $N<M$, a suitable $M$-th degree polynomial reconstruction $\widetilde{\uvec{u}}_{n}(\uvec{x})$ has to be considered in place of $\uvec{u}_{n}(\uvec{x})$ for the computation of the related integral over $K$ in the predictor \eqref{eq:sys_weak_discrete} in order to guarantee $(M+1)$-th order of accuracy. Usually, $\widetilde{\uvec{u}}_{n}(\uvec{x})$ is retrieved via a WENO or CWENO reconstruction \cite{boscheri2019high,gaburro2021posteriori,dumbser2009very}.

Let us observe that if $N=M$ and the basis $\left\lbrace \trialspaceFVc{r}(\uvec{x}) \right\rbrace_{r=1,\dots,R}$ coincides with $\left\lbrace \trialspace{i}(\uvec{x}) \right\rbrace_{i=1,\dots,I}$, then the \APNPM scheme reduces exactly to the ADER-DG previously introduced. On the other hand, the scheme obtained for $N=0$, i.e., with a piecewise constant approximation of $\uvec{u}_{n}(\uvec{x})$ over $\overline{\Omega}$, is the ADER-FV scheme. One can observe that the corrector, in such case, corresponds to an explicit $(M+1)$-th order accurate FV step.
All the schemes obtained for $0<N<M$ are alternatives which vary between these two schemes.

Finally, let us notice that the predictor of such methods, being formally unchanged with respect to the original formulation, can also be seen as a DeC method in which one order is achieved at each iteration until $M+1$. 

\section{New efficient ADER schemes}\label{chap:NEWADER}
In this section, we will explain how to apply the novel modification to the described ADER framework.
We will first introduce the efficient ADER-DG-u, obtained by simply matching the order of the space-time reconstruction in each predictor iteration with the order of accuracy achieved in the same iteration, without spoiling the original order of accuracy. 
Afterwards, we will explain how such p-adaptivity strategy can be exploited to prescribe structure preservation by introducing the DOOM approach. We will focus on the ADER-DG scheme, bearing in mind that the same modifications hold true for ADER-FV and \APNPM schemes as well.

\subsection{Modification of ADER-DG (ADER-DG-u)}\label{chap:MOD}
We propose to change the predictor of ADER-DG by increasing the polynomial degree of the reconstruction of the numerical solution at each iteration $p$ according to the order of accuracy achieved in that specific iteration.
In particular, we define for any $p$ the general local basis $\lbrace\trialspacetimeiter{\ell}{(p)}(\uvec{x},t)\rbrace_{\ell=1,\dots,L^{(p)}}$ given by the tensor product of space basis functions $\trialspaceiter{i}{(p)}(\uvec{x})$ and time basis functions $\trialtimeiter{m}{(p)}(t)$ of degree $p$. We also define the functional spaces generated by these bases as $X^{(p)}:=\left( \mathrm{span}\lbrace \vartheta^{\ell,(p)}(\uvec{x},t) \rbrace_{\ell=1,\dots,L^{(p)}} \right)^Q$. 
\begin{remark}[On the spaces $X^{(p)}$]
According to our definitions of the operators \eqref{eq:ADER_l1} and \eqref{eq:ADER_l2}, formally, the spaces $X^{(p)}$ in the ADER context should be spaces of coefficients of the discrete solution. However, by definition, such spaces are in bijection with the functional spaces of $Q$-dimensional polynomials whose scalar components are spanned by the bases $\lbrace\trialspacetimeiter{\ell}{(p)}(\uvec{x},t)\rbrace_{\ell=1,\dots,L^{(p)}}$. Since in this context referring to the polynomial degree of the numerical solution in each step of the process provides a clearer overview of the method, as an abuse of notation we denote directly $X^{(p)}$ as the functional space associated to the corresponding coefficients, bearing in mind the aforementioned bijection.
\end{remark}

Then, the main procedure at the iteration $p$ passes from the space-time representation coefficients $\undu^{(p-1)}$ with respect to the basis of $X^{(p-1)}$, $(p-1)$-th order accurate with respect to the analytical solution, to $\undu^{(p)}$ in $\Xp$ with accuracy $p$.
To perform this step, we first use an embedding $\mathcal{E}^{(p-1)}:X^{(p-1)}\to X^{(p)}$, for example an interpolation or an $L^2$-projection, to pass to $\undu^{*(p-1)} = \mathcal{E}^{(p-1)}(\undu^{(p-1)}) \in X^{(p)}$. This embedding should not spoil the accuracy of the reconstructed solution. 

At this point, a simple iteration of the standard method \eqref{eq:ADERDEC_iteration}, with structures \eqref{eq:ADER_predictor_structures} associated to the basis $\lbrace\trialspacetimeiter{\ell}{(p)}(\uvec{x},t)\rbrace_{\ell=1,\dots,L^{(p)}}$, results in $\undu^{(p)}$ and the related $p$-th order accurate reconstruction in $C_K$.
These structures read
\begin{align}
\label{eq:ADER_li_bis}
\begin{split}
\lopdt^{2,(p)}(\undu)&:=\undu-\left( B^{(p)} \right)^{-1}\left[\undr^{(p)}-\undspacetilde^{(p)}(\undu)\right],\\
\lopdt^{1,(p)}(\undu)&:=\undu-\left( B^{(p)} \right)^{-1}\left[\undr^{(p)}-\undspacetilde^{(p)}(\undu_0^{(p)})\right],
\end{split}
\end{align}
with
\begin{align}
\label{eq:ADER_predictor_structures_bis}
\begin{split}
B^{(p)}_{j,\ell}:=\int_K \trialspacetimeiter{\ell}{(p)} (\uvec{x},t_{n+1})\trialspacetimeiter{j}{(p)}&(\uvec{x},t_{n+1}) d\uvec{x} - \int_{C_K}\!\!\! \trialspacetimeiter{\ell}{(p)}(\uvec{x},t) \frac{\partial}{\partial t} \trialspacetimeiter{j}{(p)}(\uvec{x},t) d\uvec{x}dt,\\
\undu :=\begin{pmatrix}
\uvec{u}^1\\
\vdots\\
\uvec{u}^{L^{(p)}}
\end{pmatrix}, \quad &\undr^{(p)} :=\begin{pmatrix}
\int_K \uvec{u}_n(\uvec{x})\trialspacetimeiter{1}{(p)}(\uvec{x},t_n)d\uvec{x}\\
\vdots\\
\int_K \uvec{u}_n(\uvec{x})\trialspacetimeiter{L^{(p)}}{(p)}(\uvec{x},t_n)d\uvec{x}
\end{pmatrix},\\ 
\undspacetilde^{(p)}(\undu):= &\begin{pmatrix}
\int_{t_n}^{t_{n+1}} \spacestuff^{(p)}_1(\undu,t) dt\\
\vdots\\
\int_{t_n}^{t_{n+1}} \spacestuff^{(p)}_{L^{(p)}}(\undu,t) dt
\end{pmatrix},
\end{split}
\end{align}
where $\spacestuff^{(p)}_j(\undu,t):=\int_K \E(\uvec{u}_h(\uvec{x},t),\uvec{x})\trialspacetimeiter{j}{(p)}(\uvec{x},t) d\uvec{x}$, with $\uvec{u}_h(\uvec{x},t)=\sum_{\ell=1}^{L^{(p)}}\uvec{u}^\ell \trialspacetimeiter{\ell}{(p)}(\uvec{x},t)$ $\forall (\uvec{x},t)\in C_K$ and $\undu^{(p)}_0$ some local coefficients extrapolated from the initial datum $\uvec{u}_n(\uvec{x})$ in $K$, yielding an $O(\Delta t)$-approximation of the analytical solution to our PDE in $C_K$. Again,  the definition of $\undu^{(p)}_0$ is merely formal as it cancels in the iterations.

The resulting modified ADER-DG-u iterative procedure reads
\begin{equation}
\label{eq:ADER_iteration_bis}
	\begin{split}
&\undu^{(0)} = \undu_0^{(0)},\\
&\begin{cases}
\undu^{*(p-1)} = \mathcal{E}^{(p-1)}\left(\undu^{(p-1)}\right),\\
\undu^{(p)}=\left( B^{(p)} \right)^{-1}\left[\undr^{(p)}-\undspacetilde^{(p)}(\undu^{*(p-1)})\right],
\end{cases} \text{for }p\geq 1.
	\end{split}
\end{equation}
We remark that the superscript $(p)$ in the definition of the structures in \eqref{eq:ADER_li_bis} and \eqref{eq:ADER_predictor_structures_bis} is simply referred to the iteration, the modified method is still fully explicit and all the terms at the right hand side of the iteration formula \eqref{eq:ADER_iteration_bis} can be explicitly computed.

The accuracy evolves as follows throughout the procedure.
We start with $\undu^{(0)}$ associated to a piecewise constant $O(\Delta t)$-approximation of the solution to the PDE in $C_K$ and we perform the embedding in $X^{(1)}$ to get $\undu^{*(0)}$, still $O(\Delta t)$-accurate. Performing the first iteration via $\lopdt^{1,(1)},\lopdt^{2,(1)}$ we get $\undu^{(1)}$ yielding an $O(\Delta t^2)$-approximation of the solution in $C_K$. 
We continue iteratively with $\undu^{(p-1)}$ associated to a $(p-1)$-th order accurate approximation of the solution in $X^{(p-1)}$ spanned by polynomial bases of degree $p-1$, that is embedded in $\Xp$ obtaining $\undu^{*(p-1)}$ with the same accuracy $p-1$. This allows to compute $\undu^{(p)}$ via a DeC iteration with $\lopdt^{1,(p)},\lopdt^{2,(p)}$ achieving $p$-th order of accuracy.
\begin{remark}{(On the accuracy of the interpolation)}
\RIcolor{
One must notice that the discretization in the space $X^{(p)}$, corresponding to the tensor product of polynomials of degree $p$ in space and in time, allows in general a maximal order of accuracy $p+1$ with respect to the analytical solution of the PDE, corresponding to an error $O(\Delta t^{p+2})$. 
On the other hand, the embedding $\mathcal{E}^{(p)}:X^{(p)}\to X^{(p+1)}$ can be simply realized by interpolation, i.e. by evaluating the reconstruction associated to $\undu^{(p)}\in X^{(p)}$ in the nodal values defining the $X^{(p+1)}$ basis functions, when nodal bases are employed, 
or by an $L^2$-projection, when more general bases are considered.
In both cases, this operation can preserve at most the accuracy of order $p$, hence introducing an error of $O(\Delta t^{p+1})$.
Therefore, the embedding must be performed before saturating the accuracy associated to the current polynomial basis to avoid the consequent degradation of the order.
}
\end{remark}
Due to the previous remark, if the final polynomial degree of the bases in space and in time is fixed to $M$, it is convenient to perform $M$ iterations in the form \eqref{eq:ADER_iteration_bis} to get $\undu^{(M)}$, associated to the desired final discretization, plus a final iteration in the same space $X^{(M+1)}=X^{(M)}$ with the same structures as the ones used in the $M$-th iteration to saturate the accuracy related to such discretization getting thus $\undu^{(M+1)}$ yielding $(M+1)$-th order of accuracy.

The degrees of the bases of the spaces $X^{(p)}$, assuming a fixed final polynomial degree equal to $M$, are summarized in Table \ref{tab:space_degree} and the procedure is displayed in the following sketch

\resizebox{.9\linewidth}{!}{
  \begin{minipage}{\linewidth}
\begin{alignat*}{7}
&\undu^{(0)} \xrightarrow{\mathcal{E}^{(0)}} &&\undu^{*(0)} \xrightarrow[\lopdt^{1,(1)}]{\lopdt^{2,(1)}} &&\undu^{(1)} \xrightarrow{\mathcal{E}^{(1)}} && \undu^{*(1)} \xrightarrow[\lopdt^{1,(2)}]{\lopdt^{2,(2)}}&& \undu^{(2)} \xrightarrow{\mathcal{E}^{(2)}}
 \dots \xrightarrow[\lopdt^{1,(M)}]{\lopdt^{2,(M)}} &&\undu^{(M)} \xrightarrow[\lopdt^{1,(M)}]{\lopdt^{2,(M)}} &&\undu^{(M+1)}. \nonumber\\
&\!\!\!\!O(\Delta t) &&\!\!\!\!O(\Delta t)   &&\!\!\!\!O(\Delta t^2) && \!\!\!\!O(\Delta t^2)  &&\!\!\!\!O(\Delta t^3)  &&\!\!\!\!O(\Delta t^{M+1}) &&\!\!\!\!O(\Delta t^{M+2})
\end{alignat*}
  \end{minipage}
}

\begin{table}
	\caption{Increasing degrees of polynomial spaces $X^{(p)}$ varying the iteration}\label{tab:space_degree}\centering
	\begin{tabular}{|c|cccccccc|}\hline
		Space&$X^{(0)}$&$X^{(1)}$&$X^{(2)}$&$X^{(3)}$&\dots&$X^{(M-1)}$&$X^{(M)}$&$X^{(M+1)}$\\ \hline
		Polynomial degree &0&1&2&3&\dots&$M-1$ & $M$& $M$\\ \hline
	\end{tabular}
\end{table}

Finally, always assuming a final polynomial degree equal to $M$ in the predictor, the corrector step \eqref{eq:sys_weak_corrector_discrete} is normally performed with the $(M+1)$-th order accurate discretization given by the $M$-th degree local polynomial basis functions $\trialspace{i}^{(M)}$ used in the two last predictor iterations. This leads, in each element $K$, to the local approximation $\uvec{u}_{n+1}(\uvec{x})=\sum_{i=1}^I \uvec{c}_i^{n+1} \trialspace{i}^{(M)}(\uvec{x})$ which is $(M+1)$-th order accurate.
The computational advantage of the modified method with respect to the original formulation is clear: all the iterations but the last two are performed with matrix and vector structures which are smaller, implying the solution of smaller systems. Also the space-time discretization of $\E(\uvec{u}_h,\uvec{x})=\mathrm{div}_{\uvec{x}}\boldsymbol{F}(\uvec{u}_h(\uvec{x},t))-\boldsymbol{S}(\uvec{x},\uvec{u}_h(\uvec{x},t))$ and the orders of the quadrature formulas used in the low order iterations can be suitably chosen to decrease the related computational cost.
The only extra cost can come from the embedding between the spaces, which, for interpolations, can be easily recast as products by precomputable interpolation matrices, which can therefore be efficiently performed.

In this work, we assume modal bases in space and time. This further increases the computational advantage as, in such context, the higher order mode is easily introduced by adding zero components to $\undu^{(p)}$ thus getting
$\undu^{*(p)}=(\undu^{(p)},\uvec{0})^T$ without any other effort.

We denote this scheme by ADER-DG-u, referring to the $\alpha$DeCu schemes introduced in \cite{loredavide} with a similar technique, where u denotes the quantity that has been embedded.

\begin{remark}{(Galerkin projection)}
	In the specific context of these new modified ADER-DG methods, the embedding procedure between the spaces $X^{(p-1)}$ and $\Xp$ could be replaced by a Galerkin projection onto $\Xp$. 
	Namely, in \eqref{eq:ADER_iteration_bis} one  could skip the interpolation procedure and directly consider $\undspacetilde^{(p)}(\undu^{(p-1)})$ which is defined in each $j$-th component by the integral over $[t_n,t_{n+1}]$ of 
	\begin{equation}
		\spacestuff^{(p)}_j(\undu^{(p-1)},t):=\int_K \E(\uvec{u}^{(p-1)}_h(\uvec{x},t),\uvec{x})\trialspacetimeiter{j}{(p)}(\uvec{x},t) d\uvec{x},
	\end{equation}
with $\uvec{u}^{(p-1)}_h(\uvec{x},t) = \sum_{\ell=1}^{L^{(p-1)}} \uvec{u}^{\ell,(p-1)} \vartheta^{\ell,(p-1)}(\uvec{x},t)$.
	This mismatch between the spaces of the explicit term and of the test functions permits the evolution to the next space $\Xp$. This is particularly convenient as there would be no interpolation, whose cost is however negligible with respect to the rest of the scheme.
	For modal bases the two approaches are equivalent.
\end{remark}

\begin{remark}{(Space-time accuracy)} Since the ADER schemes are one-step fully discrete predictor-corrector methods on space-time control volumes $C_K$, the order of accuracy in space and time is simultaneously evolved, thus if the iterative solution $\undu^{(p)}$ is of order $O(\Delta t^{p+1})$ in time, it is also accurate $O(h^{p+1})$ in space, assuming a suitable CFL condition linking $\Delta t$ and $h$. This is omitted to lighten the notation.	
\end{remark}

As already remarked, since the predictor of the \APNPM is identical to the one of the standard ADER-DG, we can analogously introduce the \APNPM\!\!-u and, as a particular case, the ADER-FV-u methods in a straightforward way.

\RIIcolor{
\begin{remark}{(On the memory and computational differences of ADER-DG-u)}
	The implementation of ADER-DG and ADER-DG-u can be performed in various ways. 
	The evolution structures of the predictor might be either pre-computed at the beginning of the simulation or on-the-fly at each time iteration (this is necessary for Lagrangian codes on moving meshes). 
	We make use of polygonal meshes, but on triangular meshes all operators can be pre-computed with minimal storage on a reference element.
	
	For ADER-DG-u with respect to ADER-DG, there might be an increase in cost and storage for the different iterative structures, e.g. $B^{(p)}$, only if nodal basis functions are used. 
	In the considered case, where modal polynomial basis functions are adopted, the iterative structures are simply constituted of slices of the highest order structures, that would be anyway computed for ADER-DG. 
	Hence, there is no extra computation nor memory storage to be considered.
	In the modal case, these costs can be reduced by a pre-computation of all the operators and, in case of triangular meshes, with computations only on a reference element.
	
\end{remark}
}

\subsection{DOOM limiter based on adaptivity }\label{chap:DOOM}
In the context of the novel schemes, it is very natural to introduce a limiter that guarantees some structural properties of the solution. The limiter will be denoted by Discrete Optimally increasing Order Method (DOOM), as it will stop the iteration process in the predictor at an optimal value.

We consider the ADER-FV-u scheme, to inherit the robustness of FV formulations and the far less restrictive CFL constraints which are suitable for large scale simulations, and we introduce an adaptive criterion.
We fix a final number of iterations $P=M+1$, corresponding to $(M+1)$-th order of accuracy, and we perform the local predictor iterations as prescribed in the context of the ADER-FV-u scheme but, in contrast with the standard method, we check for the non-violation of some physical constraints (for example the positivity of density and pressure in hydrodynamics) of the computed solution. If at iteration $p$, with $1\leq p\leq M+1$, the computed $\undu^{(p)}$ does not fulfill some of the mentioned constraints, the solution is rejected and $\undu^{(p-1)}$ is assumed to be the output of the iterative procedure for the correction step. 
Let us notice that, in the worst case, considering $\undu^{(0)}$ in the correction step in a given region of $\Omega$ leads to a standard first order Godunov scheme which is, indeed, reliable. A sketch of the limiter is displayed in Algorithm \ref{algo:DOOM}, in which $\overline{\uvec{u}}_n$ represents the local constant value of $\uvec{u}_n(\uvec{x})$ in a cell $K$ in the ADER-FV-u (and ADER-FV) context.
\begin{algorithm}
	\caption{DOOM limiter for ADER-FV-u on a cell $K$}\label{algo:DOOM}
	\begin{algorithmic}
		\Require $\overline{\uvec{u}}_n$
		\State $\undu^{(0)} = \overline{\uvec{u}}_n$
		\For{$p=1,\dots,M+1$}
		\State $\undu^{*(p-1)} = \mathcal{E}^{(p-1)}(\undu^{(p-1)})$
		\State $\undu^{(p)}=\left( B^{(p)} \right)^{-1}\left[\undr^{(p)}-\undspacetilde^{(p)}(\undu^{*(p-1)})\right]$
		\If{$\undu^{(p)}$ does not meet the criteria}
		\State \texttt{return} $\undu^{(p-1)}$
		\EndIf
		\EndFor
		\State \texttt{return} $\undu^{(M+1)}$
	\end{algorithmic}
\end{algorithm}

The strategy may remind the \textit{a posteriori} MOOD technique \cite{clain2011high,diot2012improved,boscheri2015direct,bacigaluppi2019posteriori} with some fundamental differences. The low order acceptable solution $\undu^{(p-1)}$ has been computed before $\undu^{(p)}$, as it was a necessary step towards the increase of an order of accuracy. Moreover, the order of accuracy is automatically pushed as much as possible without violating the physical constraints: in fact $\undu^{(p)}$, possibly rejected, is computed if and only if $\undu^{(p-1)}$ was reliable. This avoids the risk of an over-diffusion in having the safe low order scheme guaranteeing an accuracy lower than the one actually achievable. Therefore, it is then possible to preserve some physical properties through this procedure as explained in the following proposition.

\begin{proposition}[ADER-FV-u with DOOM property]
	Suppose that the FV scheme preserves a property $\mathscr{P}$. Suppose that the property $\mathscr{P}$ is checked in the DOOM admissibility criteria. Then the ADER-FV-u with the DOOM limiter preserves the property $\mathscr{P}$.
\end{proposition}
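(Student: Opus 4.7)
The plan is to argue cell-by-cell on what Algorithm \ref{algo:DOOM} returns. Both the predictor and the corrector act locally on each control volume $K$ (with only the predictor traces on the cell faces coupling $K$ to its neighbours in \eqref{eq:sys_weak_corrector_discrete}), so it suffices to verify $\mathscr{P}$ cell by cell.

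Fix a cell $K$ and let $p^*\in\{0,\dots,M+1\}$ denote the index such that $\undu^{(p^*)}$ is returned by DOOM. I would split the argument into two cases. If $p^*=0$, the returned predictor is the piecewise constant extension of $\overline{\uvec{u}}_n$, and inspection of \eqref{eq:sys_weak_corrector_discrete} shows that the corrector reduces exactly to the standard first-order Godunov FV scheme: with a piecewise constant $\uvec{u}_h$, the volume integral of $\boldsymbol{F}$ against $\nabla_{\uvec{x}}\trialspace{j}$ and the source integral collapse into the classical FV contributions, while the surface integral becomes the usual $\widehat{\uvec{F}}(\overline{\uvec{u}}_n\vert_K,\overline{\uvec{u}}_n\vert_{K^+})$ flux. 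By the standing hypothesis that the FV scheme preserves $\mathscr{P}$, the update $\uvec{u}_{n+1}$ then satisfies $\mathscr{P}$ in $K$. In the complementary case $p^*\geq 1$, the DOOM stopping condition guarantees that $\undu^{(p^*)}$ passed the admissibility check at iteration $p^*$, and since $\mathscr{P}$ is part of that check, the corrector update built from $\undu^{(p^*)}$ satisfies $\mathscr{P}$ by construction.

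The main obstacle is the precise semantics of ``checked in the DOOM admissibility criteria'': the algorithm as stated tests the predictor $\undu^{(p)}$, whereas the property we ultimately want to preserve concerns the corrector output $\uvec{u}_{n+1}$. To close the high-order case ($p^*\geq 1$) rigorously, the criteria must either be expressed directly on the computed corrector update, or be phrased as a sufficient predictor-level condition (possibly coupled with a CFL-type restriction) that implies $\mathscr{P}$ for the update. The fallback $p^*=0$ does not suffer from this issue and is exactly what provides the unconditional safety net on which the overall robustness of the scheme rests.
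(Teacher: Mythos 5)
Your argument is on the right track — the core logic (DOOM guarantees an admissible predictor, then the FV corrector closes the loop) is what the paper uses — but the case split introduces a gap and you miss where the paper applies the key hypothesis.

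The paper's proof is by induction on the time step $n$ and makes no case distinction on $p^*$. Having assumed $\overline{\uvec{u}}_n$ satisfies $\mathscr{P}$ in each cell, the paper first observes that $\undu^{(0)}=\uvec{c}_1^n=\overline{\uvec{u}}_n$ satisfies $\mathscr{P}$ \emph{independently of the WENO reconstruction} $\widetilde{\uvec{u}}_n$ used inside the predictor's integrals — a point you do not mention, but which matters because $\undu^{(0)}$ is set from the cell average, not from the (possibly oscillatory) reconstructed polynomial. Then, uniformly for every $p^*\in\{0,\dots,M+1\}$, the returned predictor satisfies $\mathscr{P}$: either it is $\undu^{(0)}$, or it passed the check. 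Finally, the corrector step is by construction the FV scheme applied to this admissible $\uvec{u}_h$, and the proposition's \emph{first} hypothesis — ``the FV scheme preserves $\mathscr{P}$'' — is read exactly as the predictor-level sufficient condition you were looking for: it asserts that the FV update \eqref{eq:sys_weak_corrector_discrete}, fed with predictor fields satisfying $\mathscr{P}$, returns cell averages $\uvec{u}_{n+1}$ satisfying $\mathscr{P}$. You invoke this hypothesis correctly in your $p^*=0$ branch but then, in the $p^*\geq 1$ branch, replace it with the unjustified phrase ``by construction'' and subsequently flag the resulting uncertainty yourself. The resolution is simply to apply the same hypothesis there: drop the case split, note that $\uvec{u}_h$ satisfies $\mathscr{P}$ for every $p^*$, and conclude by the FV-preservation hypothesis. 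Your third paragraph is a fair observation about what the hypothesis must mean (a predictor-level condition, possibly under a CFL restriction), but the paper takes that meaning as given rather than leaving it as an open obstacle.
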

\begin{proof}
	In the ADER-FV-u case, $\uvec{u}_n$ is locally represented through a single constant basis function $\lambda_1\equiv 1$.
	By induction on the time step $n$, suppose that the value of $\uvec{u}_n$ in each cell $K$, i.e., $ \uvec{c}_{1}^n:=\overline{\uvec{u}}_n$ of \eqref{eq:reconstruction_at_tn_bis}, verifies the property $\mathscr{P}$.
	Then, independently of the (WENO) reconstruction of $\uvec{u}_n$ used to obtain the polynomial $\widetilde{\uvec{u}}_{n}(\uvec{x})$ for computing the related integrals in \eqref{eq:ADER_predictor_structures_bis} with the desired accuracy, $\undu^{(0)}=\uvec{c}_{1}^n$ still is the local value of the original $\uvec{u}_n$ and fulfills the property $\mathscr{P}$.
	Then, during the ADER-FV-u DOOM procedure, $\undu^{(p)}$ are kept in the iterations only if property $\mathscr{P}$ is fulfilled.
	Hence, property $\mathscr{P}$ holds for the final predictor $\uvec{u}_h$ and, in the corrector step \eqref{eq:sys_weak_corrector_discrete}, we perform the FV method using $\uvec{u}_h$. So, $\uvec{u}_{n+1}$ still fulfills property $\mathscr{P}$.
\end{proof}

At the moment, this procedure does not guarantee to preserve the property for \APNPM with $N>0$, due to the fact that the corrector in such case is not an explicit FV step. However, the authors are working on new structure preserving strategies for such schemes. Moreover, in the simulations of this work, only positivity of density and pressure is checked with the DOOM limiter, but other properties like discrete local maximum principle or entropy inequalities \cite{kuzmin2020entropy,kuzmin2020entropycg,hajduk2021monolithic,abgrall2021relaxation} can be ensured.

\begin{remark}[Other applications of the p-adaptivity]
	The adaptive nature of the novel methods can be exploited also for other applications.
	In particular, the approach can approximate the exact solution with arbitrary precision as $p\to +\infty$ and 
	it is not constrained to a maximum degree $M$ and the related approximation accuracy. 
	Within this framework, it is easy to design efficient arbitrary high order adaptive schemes, 
	as in \cite{loredavide} in a DeC context for ODEs, 
	choosing the stopping criterion for the iterations in accordance with the iteration error. 
	Also hp-adaptivity can be introduced in this framework. As soon as the DOOM limiter requires low order steps, it is possible to locally use h-adaptivity to recover for the lost accuracy.
	These applications are already object of study of the authors, but they will not be treated in this work.
\end{remark}


\section{Numerical results}\label{chap:Num}
In this section, we will report the numerical results of several tests performed in order to validate the accuracy and the robustness of the novel ADER-DG-u and ADER-FV-u methods, i.e., \APNPM\!\!-u respectively with $N=M$ and $N=0$.
In order to quantify the obtained speed-up in terms of computational time, they will be compared with the state-of-the-art ADER-DG and ADER-FV methods \cite{boscheri2022continuous}, characterized by a fixed polynomial degree along the whole iterative procedure and a convergence criterion $\norm{\undu^{(p)}-\undu^{(p-1)}}_\infty<{\texttt{tol}}$ to stop the predictor iterations \eqref{eq:fixed_point}, where here we assume $\texttt{tol}=10^{-12}$.

We adopt the following notation: for each method we explicitly specify the formal order of accuracy. Therefore, ADER-DG($M+1$) and ADER-FV($M+1$) represent the original methods with predictor spatial and temporal basis functions of degree $M$ guaranteeing $(M+1)$-th order of accuracy. In the context of ADER-DG-u($M+1$) and ADER-FV-u($M+1$), instead, $M$ is the final degree of the predictor spatial and temporal basis functions at the end of the iteration process still leading to accuracy $M+1$. 

We will focus on the Euler and compressible Navier--Stokes equations.
The Euler equations are a system of hyperbolic PDEs in the form \eqref{eq:sys} given by
\begin{equation}\label{eq:Euler}
	\uvec{u}=\begin{pmatrix}
		\rho\\
		\uvec{q}\\
		E
	\end{pmatrix}, \quad \uvec{F}(\uvec{u}) = \begin{pmatrix}
		\uvec{q}\\
		\rho \uvec{v}\otimes\uvec{v} +p \mathbb{I}\\
		\uvec{v} (E+p)
	\end{pmatrix}, \quad \uvec{S}(\uvec{x},\uvec{u})=\uvec{0},
\end{equation}
where $\rho$ is the density, $\uvec{q}\in \mathbb{R}^D$ the momentum, $E$ the energy, $p$ the pressure, $\uvec{v}=\frac{\uvec{q}}{\rho}\in \mathbb{R}^D$ the velocity of the flow and $\mathbb{I}\in \mathbb{R}^{D\times D}$ is the identity matrix. The system is completed by specifying the closure equation of state $E=\frac{p}{\gamma -1} +\rho\frac{ \norm{\uvec{v}}_2^2}{2}$, where $\gamma=\frac{c_p}{c_v}$ is the adiabatic coefficient defined as the ratio between the specific heats at constant pressure and volume and is here assumed to be $\gamma=1.4$.

The more general compressible Navier--Stokes equations are obtained by keeping the viscosity effects into account and, for ideal gases, are defined by
\begin{equation}\label{eq:NS}
	\uvec{u}=\begin{pmatrix}
		\rho\\
		\uvec{q}\\
		E
	\end{pmatrix}, \quad \uvec{F}(\uvec{u}) = \begin{pmatrix}
		\uvec{q}\\
		\rho \uvec{v}\otimes\uvec{v} +\uvec{\sigma}(\uvec{u},\nabla_{\uvec{x}} \uvec{u})\\
		\uvec{v} (E \mathbb I +\uvec{\sigma}(\uvec{u},\nabla_{\uvec{x}} \uvec{u}))- \kappa \nabla_{\uvec{x}} \Temp
	\end{pmatrix}, \quad \uvec{S}(\uvec{x},\uvec{u})=\uvec{0},
\end{equation}
where $\uvec{\sigma}(\uvec{u},\nabla_{\uvec{x}} \uvec{u})$ denotes the stress tensor, $\kappa$ is the heat conduction coefficient and $\Temp$ represents the temperature, while, the other terms have the same meaning as in the context of the Euler equations.
In particular, the stress tensor $\uvec{\sigma}(\uvec{u},\nabla_{\uvec{x}} \uvec{u})$ is given, under the Stokes hypothesis, by 
\begin{equation}
	\uvec{\sigma}(\uvec{u},\nabla_{\uvec{x}} \uvec{u}) = \left(p +\frac23 \mu ~ \mathrm{div}_{\uvec{x}} \uvec{v}\right)\mathbb I -\mu \left(\nabla_{\uvec{x}} \uvec{v} + \nabla_{\uvec{x}} \uvec{v}^T\right),
\end{equation}
with $p$ being the pressure of the fluid and $\mu$ the dynamic viscosity that we assume to be constant. The heat conduction coefficient $\kappa$ is linked to the viscosity coefficient through the Prandtl number Pr with the following law
\begin{equation}
	\kappa = \frac{\mu \gamma c_v}{\text{Pr}}.
\end{equation}
where again $\gamma = \frac{c_p}{c_v}$. A thermal and a caloric equation of state are needed for the closure of \eqref{eq:NS}. 
For an ideal gas those are
\begin{equation}
	\frac{p}{\rho} = R \Temp, \quad \frac{e}{\rho} = c_v \Temp,
\end{equation}
with $R$ being the specific gas constant and $e=E-\rho \frac{\norm{\uvec{v}}^2_2}{2}$ the internal energy.

We will consider two-dimensional ($D=2$) problems, hence $\uvec{v}:=(u,v)^T$.

If not stated otherwise, the CFL number is set to $\CFL=0.5$, and the time step is computed according to an explicit stability condition which is given by
\begin{equation}
	\dt \leq \CFL \, \frac{\min \limits_{ K \in \tau_h} h_K}{(2N+1) \max \limits_{K \in \tau_h} \left(  \max\limits_{\substack{\uvec{x}\in K}} \norm{\uvec{\lambda} }_\infty + 2 \max\limits_{\substack{\uvec{x}\in K}} \norm{ \uvec{\lambda}_v }_\infty \frac{2N+1}{h_K} \right)},
	\label{eqn.timestep}
\end{equation}
where $N$ represents the degree of the chosen polynomial representation, while $\uvec{\lambda}=\left(\norm{\uvec{v}}_2-\sqrt{\gamma \frac{p}{\rho}},\norm{\uvec{v}}_2,\norm{\uvec{v}}_2+\sqrt{\gamma \frac{p}{\rho}}\right)$ are the convective eigenvalues of the Euler system, and the viscous eigenvalues $\uvec{\lambda}_v$ are given in~\cite{ADERNSE}. The characteristic mesh size of the cell $h_K$ is given by the square root of its surface in 2D. If not stated differently, the local Lax-Friedrichs numerical flux function \cite{Rusanov:1961a} is used in the corrector step \eqref{eq:sys_weak_corrector_discrete}.

For more challenging tests, in which the density is close to zero, we will activate the DOOM limiter checking for the positivity of the density and pressure in the quadrature points and that no NaN appears in the solution. This limiter will be used only with the ADER-FV-u technique, which provably guarantees the preservation of the positivity of these quantities.

\subsection{Numerical convergence studies} \label{ssec.conv}


To test the accuracy of the method, we perform a convergence test on a smooth isentropic vortex~\cite{shuosher1} for the compressible Euler equations. The computational domain is $\Omega = [0,10]^2$ with periodic boundary conditions, and it is tessellated by a polygonal mesh. The vortex is centered at the initial time in $\uvec{x}_c=(x_c,y_c)^T=(5,5)^T$ and moves with a background speed of $\uvec{v}_\infty=(u_\infty,v_\infty)^T=(1,1)^T$. The initial position of the vortex, in a generic point $\uvec{x}=(x,y)^T$, can be described using the radial coordinate $r:=\norm{\uvec{x}-\uvec{x}_c}_2$ as
\begin{equation}\label{eq:vortex}
\begin{cases}
	\rho(\uvec{x},0) = (1+\delta \Temp)^{\frac{1}{\gamma-1}},\\
	\uvec{v}(\uvec{x},0)=\uvec{v}_\infty+\frac{\epsilon}{2\pi}e^{\frac{1-r^2}{2}} \begin{pmatrix}
-(y-y_c)\\(x-x_c)
\end{pmatrix},	\\
	p(\uvec{x},0) = (1+\delta \Temp)^{\frac{\gamma}{\gamma-1}},
\end{cases}
	\quad \delta \Temp = -\frac{(\gamma-1)\epsilon^2}{8\gamma\pi}e^{1-r^2},
\end{equation}
with $\Temp$ denoting the fluid temperature. The exact solution is obtained as $\uvec{u}(\uvec{x},t)=\uvec{u}(\uvec{x}-\uvec{v}_\infty t,0)$. We run the simulation until final time $t_f=1$ using the Osher-type numerical flux function \cite{OsherNC}.

\begin{figure}[!htbp]
	\begin{center}
		\begin{tabular}{cc}  
			\includegraphics[width=0.47\textwidth]{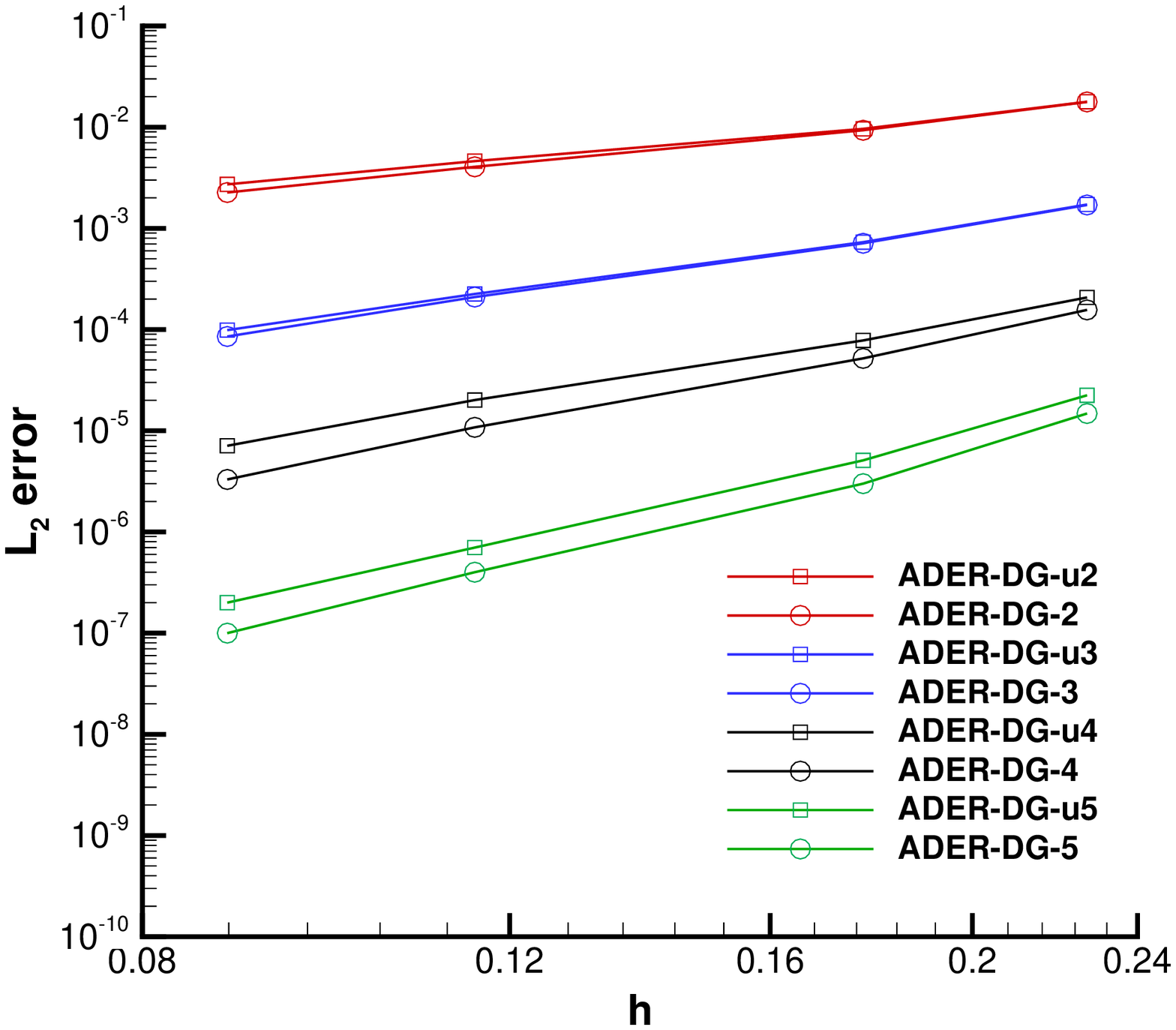} &
			\includegraphics[width=0.47\textwidth]{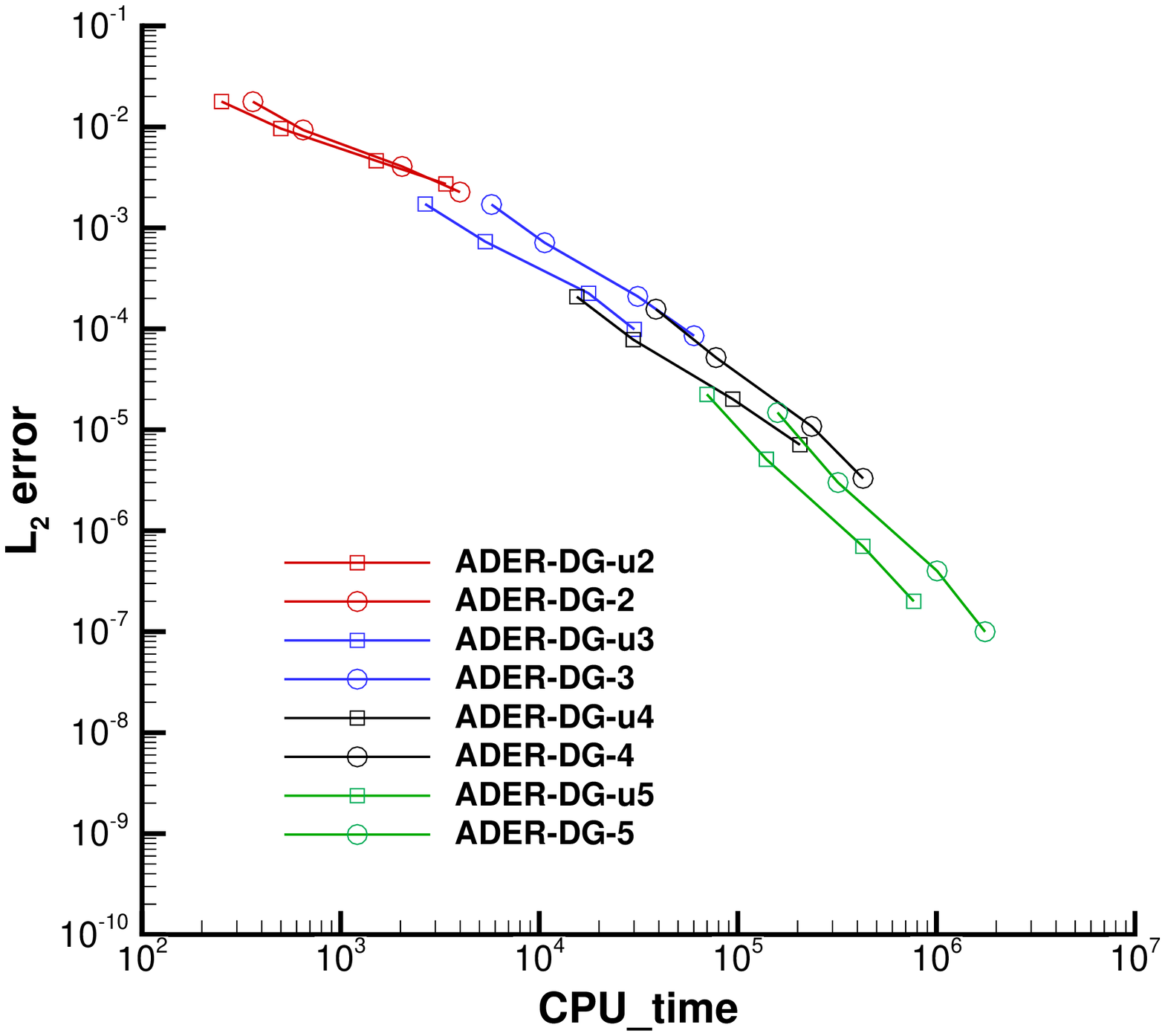} \\ 
		\end{tabular} 
		\caption{Comparison between ADER-DG-u and ADER-DG schemes from second up to fifth order of accuracy. Left: dependency of the error norm on the mesh size. Right: dependency of the error norm on the CPU time.}
		\label{fig.L2-h-time}
	\end{center}
\end{figure}


In Figure \ref{fig.L2-h-time}, we can observe on the left the errors of the ADER-DG and ADER-DG-u methods for different mesh sizes. All the methods achieve the formal order of accuracy. As expected, the ADER-DG-u has slightly larger errors with respect to the original ADER-DG method, as the first iterations of the predictors are done with lower order accurate operators. Nevertheless, the final error is quite comparable with the ADER-DG one and, looking at the right figure, we observe that the computational time required by ADER-DG-u for such simulations is much less (for high order methods it is around half) than the one required by the competitor. The slight increase in error is hugely beaten by the computational advantage of the new ADER-DG-u schemes. Indeed, the Pareto front on the right figure is only composed by ADER-DG-u points. The results are quantitatively reported in Table \ref{tab.convRates}. 
\RIcolor{We observe that the computed orders of accuracy are very close to the expected ones. Convergence analyses with vortex-type solutions are often subjected to some loss of order of accuracy as explained in \cite{spiegel2015survey,vort}. In our case, this may also be due to an imprecise choice of the mesh parameter for our polygonal meshes: we consider the maximum internal diameter of the polygons, but for some meshes this choice might not well represent the characteristic size of the cells. The same convergence trends have been observed also in in \cite{boscheri2022continuous}.} 

\begin{table}[!t]  
	\caption{Numerical convergence results for the compressible Euler equations using both ADER-DG-u and ADER-DG schemes from second up to fifth order of accuracy in space and time. The errors are measured in the $L_2$ norm and refer to the variable $\rho$ (density) at time $t_{f}=1$. The absolute CPU time of each simulation is reported in seconds $[s]$.}  
	\begin{center} 
		\begin{small}
			\renewcommand{\arraystretch}{1.2}
			\begin{tabular}{c|ccc|ccc}
				\multicolumn{1}{c}{} & \multicolumn{3}{|c}{ADER-DG-u} & \multicolumn{3}{|c}{ADER-DG} \\
				\hline
				$h(\Omega)$ & $\rho_{L_2}$ & $O(\rho_{L_2})$ & CPU time & $\rho_{L_2}$ & $O(\rho_{L_2})$ & CPU time \\ 
				\hline
				\hline
				& \multicolumn{6}{|c}{Order of accuracy: $O(2)$} \\
				2.270E-01 & 1.781E-02 & -    & 2.511E+02 & 1.775E-02 & -    & 3.616E+02 \\
				1.773E-01 & 9.625E-03 & 2.49 & 4.997E+02 & 9.322E-03 & 2.61 & 6.472E+02 \\
				1.155E-01 & 4.614E-03 & 1.71 & 1.509E+03 & 4.055E-03 & 1.94 & 2.039E+03 \\
				8.786E-02 & 2.723E-03 & 1.93 & 3.387E+03 & 2.262E-03 & 2.14 & 3.989E+03 \\
				& \multicolumn{6}{c}{Order of accuracy: $O(3)$} \\
				2.270E-01 & 1.719E-03 & -    & 2.664E+03 & 1.704E-03 & -   & 5.750E+03 \\
				1.773E-01 & 7.301E-04 & 3.46 & 5.346E+03 & 7.121E-04 & 3.53 & 1.065E+04 \\
				1.155E-01 & 2.247E-04 & 2.75 & 1.773E+04 & 2.095E-04 & 2.85 & 3.133E+04 \\
				8.786E-02 & 9.871E-05 & 3.01 & 3.010E+04 & 8.542E-05 & 3.29 & 6.020E+04 \\
				& \multicolumn{6}{c}{Order of accuracy: $O(4)$} \\
				2.270E-01 & 2.076E-04 & -    & 1.547E+04 & 1.563E-04 & -    & 3.868E+04 \\
				1.773E-01 & 7.803E-05 & 3.96 & 2.975E+04 & 5.195E-05 & 4.46 & 7.766E+04 \\
				1.155E-01 & 2.013E-05 & 3.16 & 9.427E+04 & 1.085E-05 & 3.65 & 2.354E+05 \\
				8.786E-02 & 7.139E-06 & 3.80 & 2.054E+05 & 3.332E-06 & 4.32 & 4.270E+05 \\
				& \multicolumn{6}{c}{Order of accuracy: $O(5)$} \\
				2.270E-01 & 2.238E-05 & -    & 6.993E+04 & 1.475E-05 & -   & 3.171E+05 \\
				1.773E-01 & 5.080E-06 & 6.00 & 1.393E+05 & 3.002E-06 & 6.44 & 6.390E+05 \\
				1.155E-01 & 7.405E-07 & 4.49 & 4.261E+05 & 4.180E-07 & 4.60 & 2.015E+06 \\
				8.786E-02 & 2.154E-07 & 4.52 & 7.691E+05 & 1.228E-07 & 4.48 & 3.516E+06 \\
			\end{tabular}
		\end{small}
	\end{center}
	\label{tab.convRates}
\end{table}

Finally, Figure \ref{fig.speedup} depicts the speedup achieved by the novel adaptive schemes compared against the classical formulation of iterative methods, namely ADER-DG-u versus ADER-DG. As the order of accuracy increases, the speedup becomes higher obtaining efficient schemes which are up to $\approx4.5$ times faster than the classical methods. Let us notice that the formal order of accuracy is still maintained, while getting a remarkable gain in the computational efficiency.

\begin{figure}[!htbp]
	\begin{center}
		\begin{tabular}{c}  
			\includegraphics[width=0.7\textwidth]{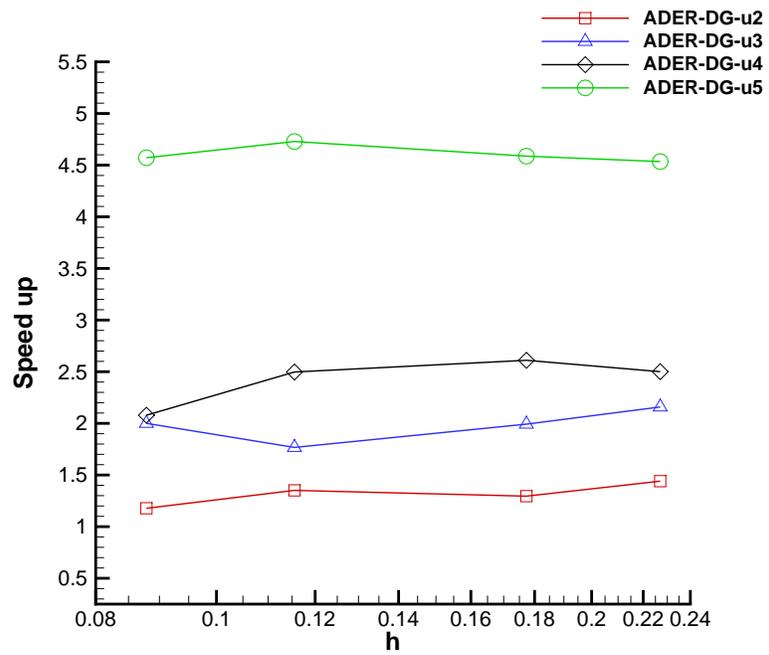}  
		\end{tabular} 
		\caption{Speedup of the ADER-DG-u schemes compared to the ADER-DG methods depending on the mesh size for different orders.}
		\label{fig.speedup}
	\end{center}
\end{figure}

\subsection{Riemann problems} \label{ssec.RP}
In this section, we will show the results of the ADER-FV-u$4$ scheme, i.e., with $M=3$, for some Riemann problems. The computational domain is the box $\Omega = [-0.5,0.5]\times[-0.05,0.05]$ with periodic boundary conditions in $y$ direction and Dirichlet boundaries imposed at $x=\pm 0.5$. We use an unstructured polygonal mesh made of $N_h=2226$ control volumes of characteristic mesh size of $h\approx 1/100$. Despite the one-dimensional setting of the test case, we underline that the preservation of symmetry of the solution is not trivial on unstructured meshes, where no cell boundaries are in principle aligned with the main flow velocity. We solve again the Euler equations \eqref{eq:Euler} with initial conditions given, as a function of the $x$ coordinate only, by 
\begin{equation}
	\uvec{u}(x,0)=\begin{cases}
		\uvec{u}_L, & \text{if }x<0,\\
		\uvec{u}_R, & \text{else},
	\end{cases}
\end{equation}
where the values of $\uvec{u}_L$ and $\uvec{u}_R$ and the final times for the different tests are taken from \cite{toro2009riemann} and they can be found in Table \ref{tab:IC_RP}.
The velocity along the $y$-direction is set to be $v=0$ for all the tests.

The DOOM limiter is here active checking for the positivity of density and pressure and avoiding NaN.
These tests are very challenging and not all the numerical methods can stably perform on them. In particular, shocks are often not well captured or numerical oscillations appear around them and it is common that negative density or pressure values appear in the simulations, making the code crash.
\begin{table}
	\centering
	\caption{Initial conditions for Riemann problems}\label{tab:IC_RP}
	\begin{tabular}{|c||c|c|c||c|c|c||c|}
		\hline
		Test& $\rho_L$& $u_L$ & $p_L$& $\rho_R$& $u_R$ & $p_R$& $t_f$\\\hline\hline
		1  &0.445&0.698&3.528 &0.5&0&0.571   &0.14\\\hline
		2  &1&2&0.1 &1&-2&0.1     &0.8 \\\hline
		3  &1&-2&0.4 &1&2&0.4  &0.15\\\hline
		4  &1&0&1000 &1&0&100  & 0.012\\ \hline
	\end{tabular}
\end{table}

\begin{figure}[!htbp]
	\begin{center}
		\begin{tabular}{ccc} 
			\includegraphics[width=0.32\textwidth]{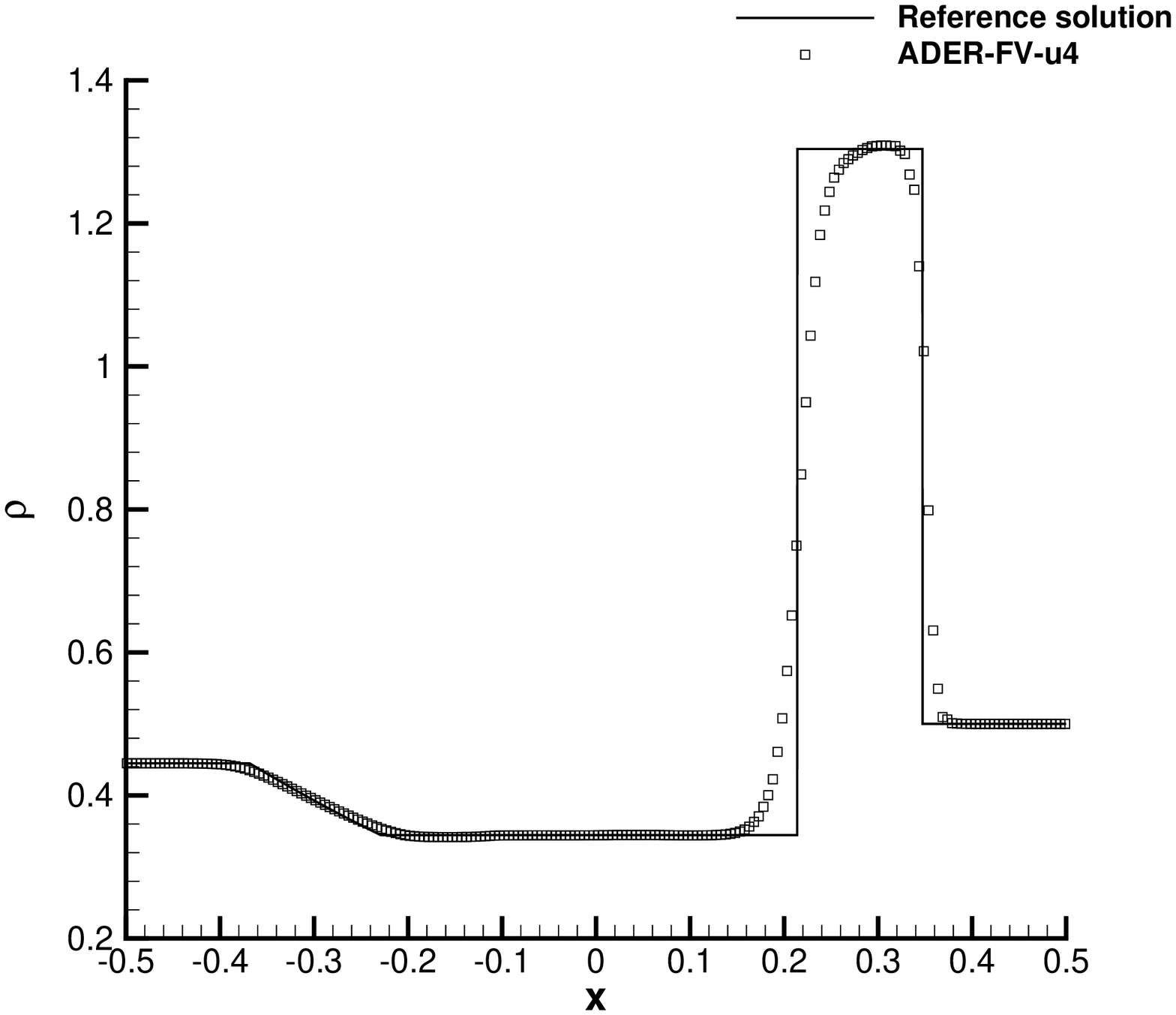} &  
			\includegraphics[width=0.32\textwidth]{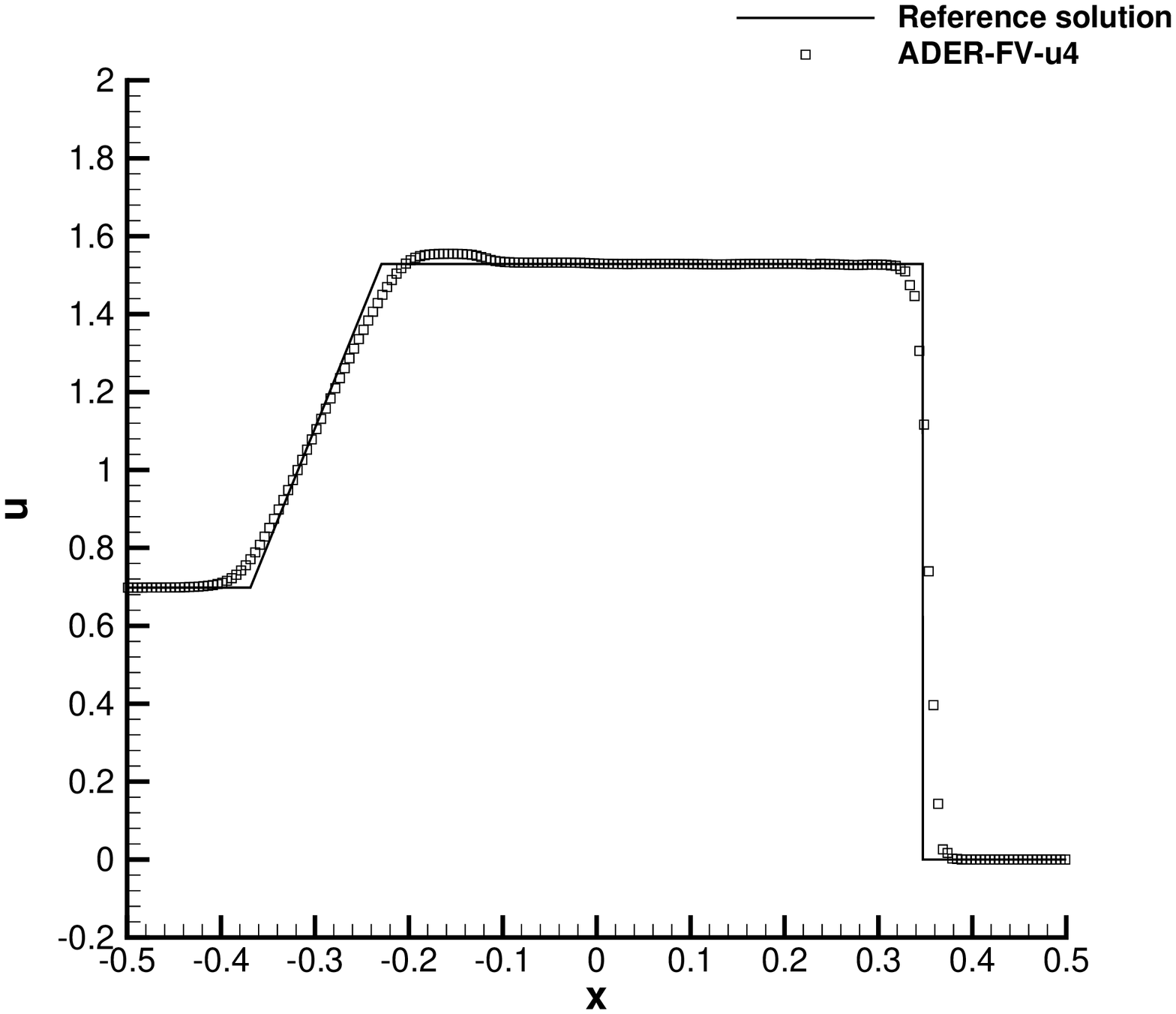}  &  
			\includegraphics[width=0.32\textwidth]{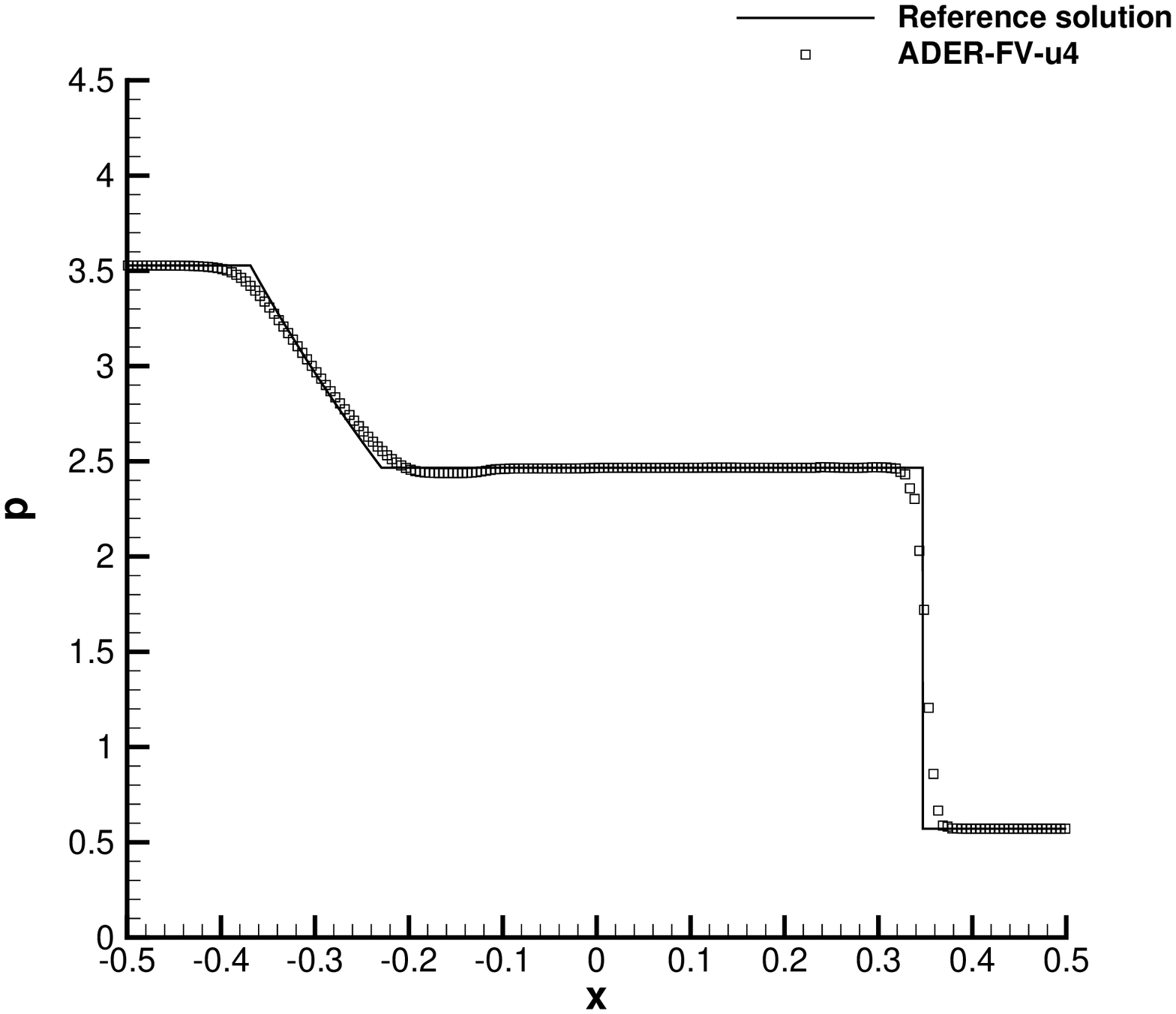} \\     
		\end{tabular} 
		\caption{Lax shock tube problem (RP1) at final time $t_f=0.14$. Comparison of density, velocity and pressure versus the reference solution for ADER-FV-u$4$ scheme.}
		\label{fig.Lax}
	\end{center}
\end{figure}
The first test (RP1) is the classical Lax shock tube problem. The initial discontinuity develops into a rarefaction wave, a contact discontinuity and a shock. In Figure \ref{fig.Lax}, we observe that the ADER-FV-u does not exhibit any oscillations around the shock and that exactly catches the speed of the discontinuities. 

\begin{figure}[!htbp]
	\begin{center}
		\begin{tabular}{ccc} 
			\includegraphics[width=0.32\textwidth]{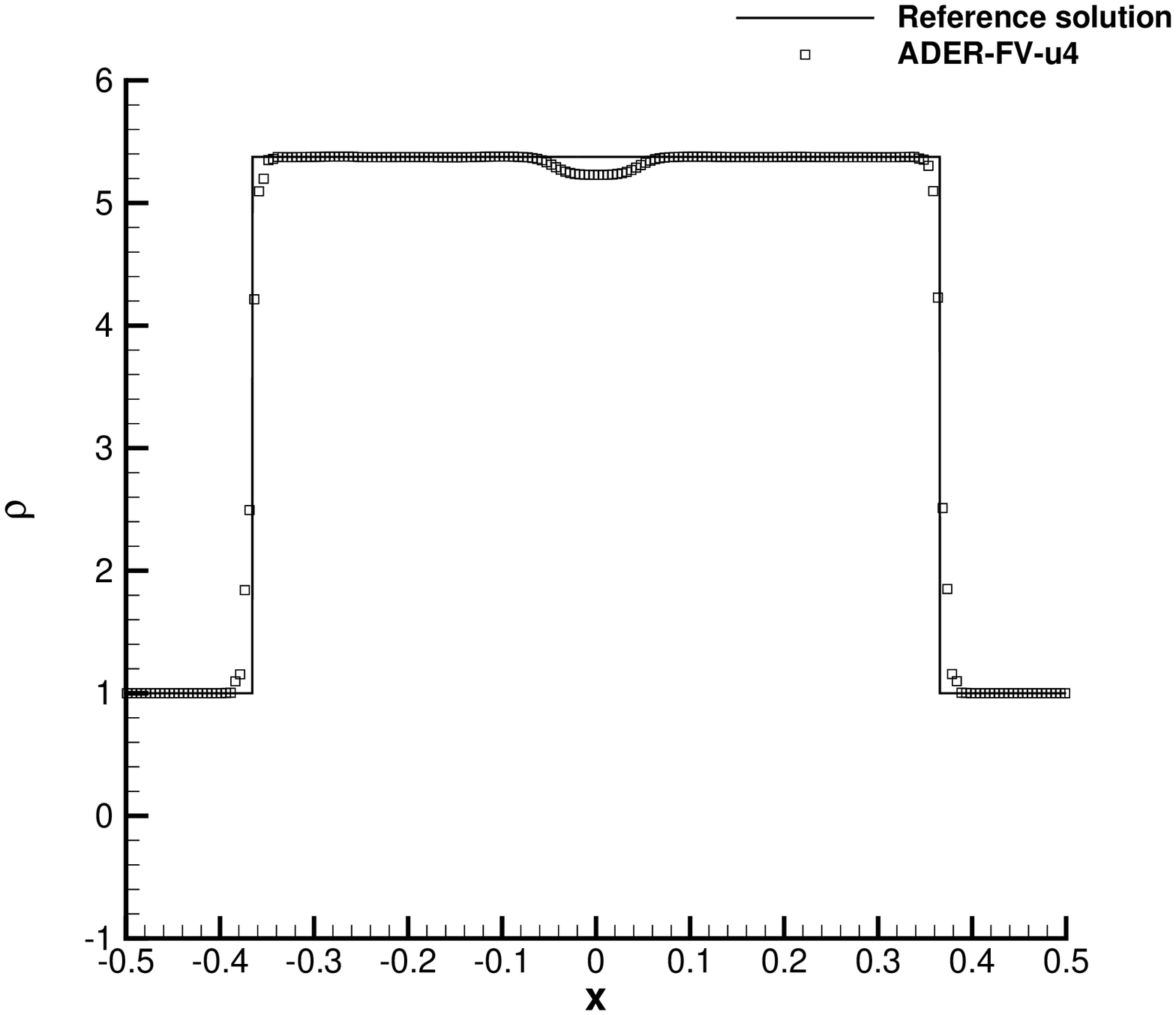}  & 
			\includegraphics[width=0.32\textwidth]{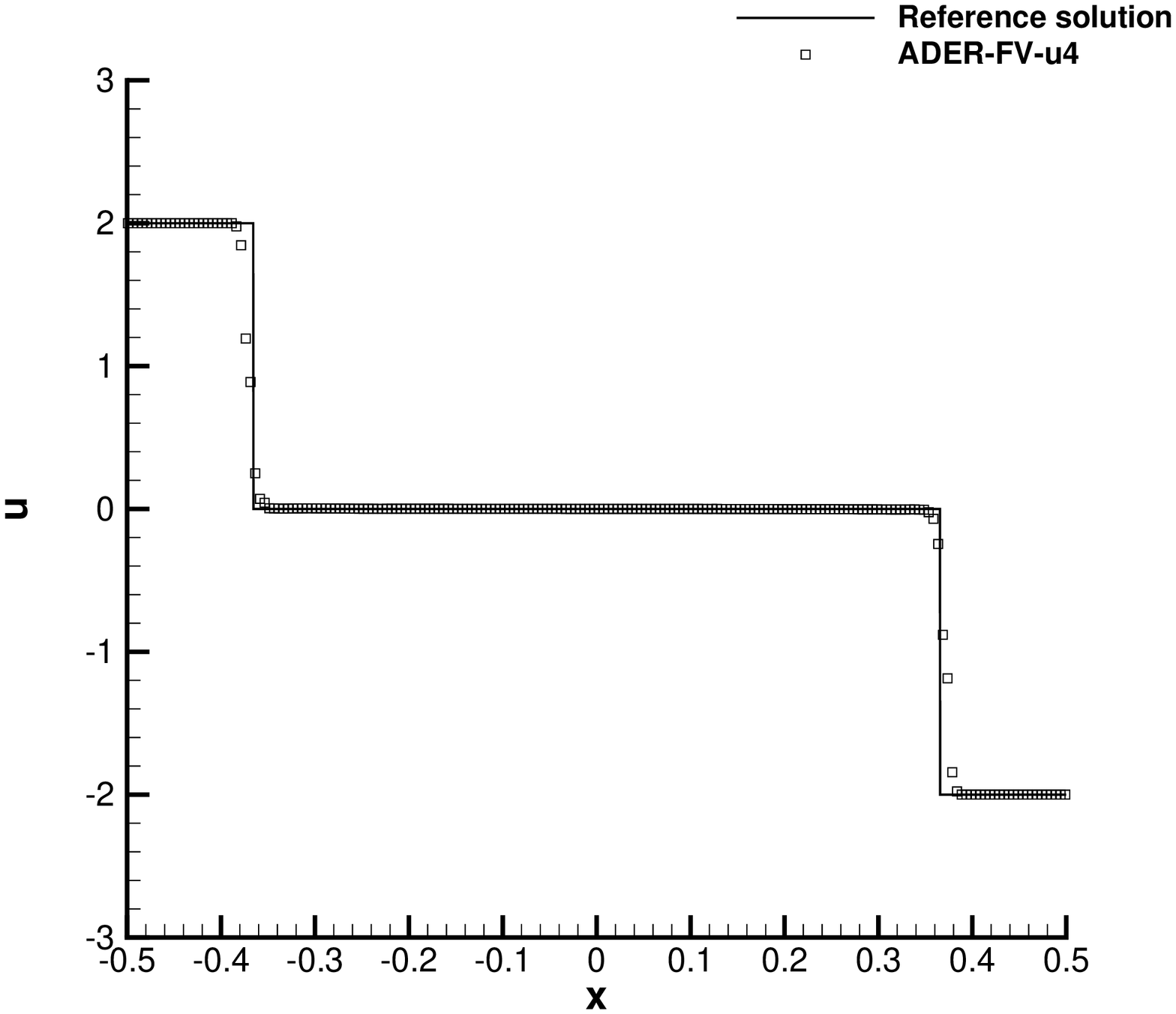}  &
			\includegraphics[width=0.32\textwidth]{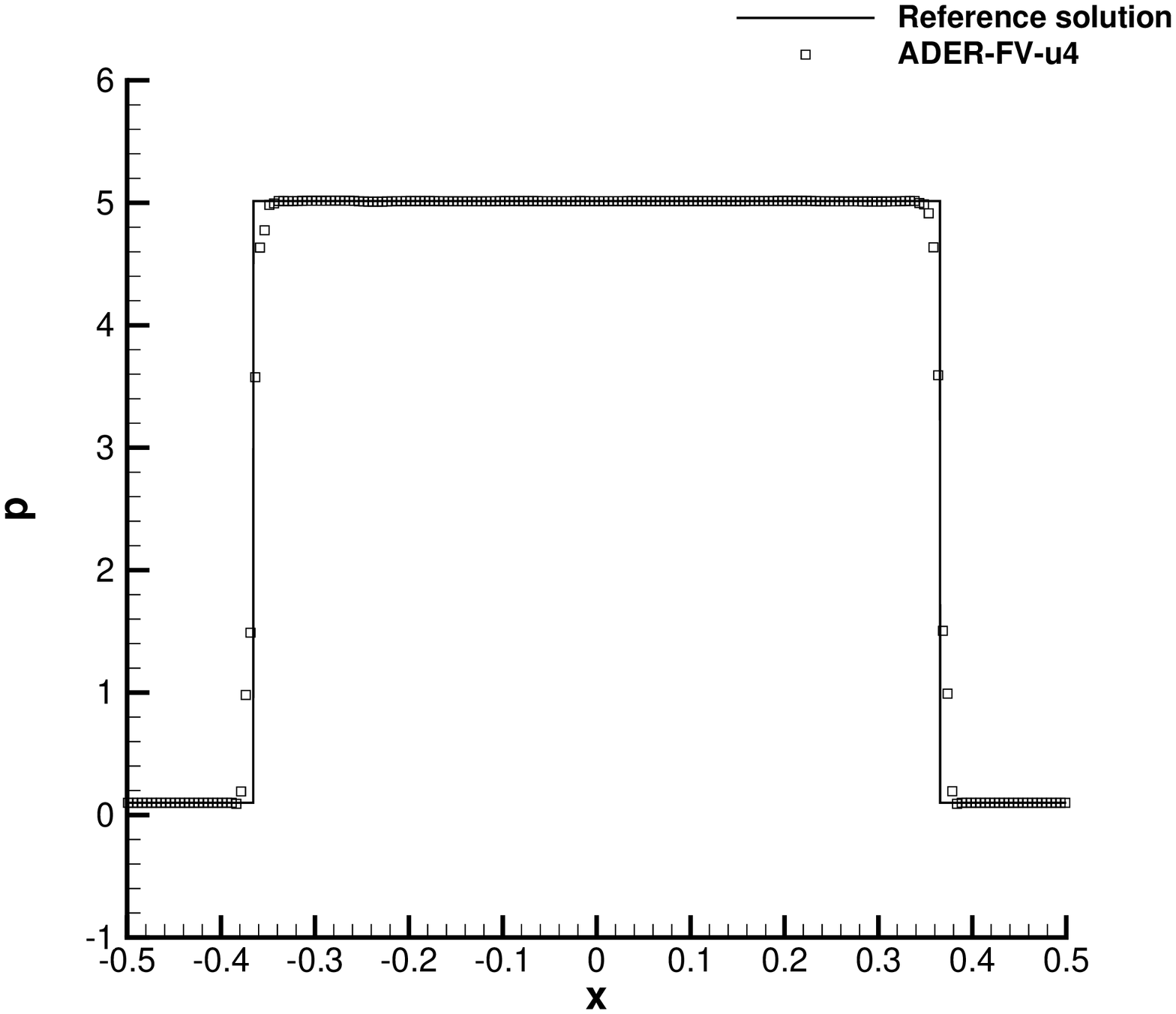} \\   
		\end{tabular} 
		\caption{Colliding shock test (RP2) at final time $t_f=0.8$. Comparison of density, velocity and pressure versus the reference solution for ADER-FV-u$4$ scheme.}
		\label{fig.2shock}
	\end{center}
\end{figure}
The second test (RP2) consists of a colliding shock test. 
The initial discontinuity in the velocity gives rise to two shocks traveling outside the domain. 
This test creates a very high density and pressure region in the middle of the domain. As it can be seen in Figure \ref{fig.2shock}, the new ADER-FV-u with DOOM limiter is able to perfectly capture the shock behavior within few cells without over/under-shootings at the sides of the shocks.

\begin{figure}[!htbp]
	\begin{center}
		\begin{tabular}{ccc} 
			\includegraphics[width=0.32\textwidth]{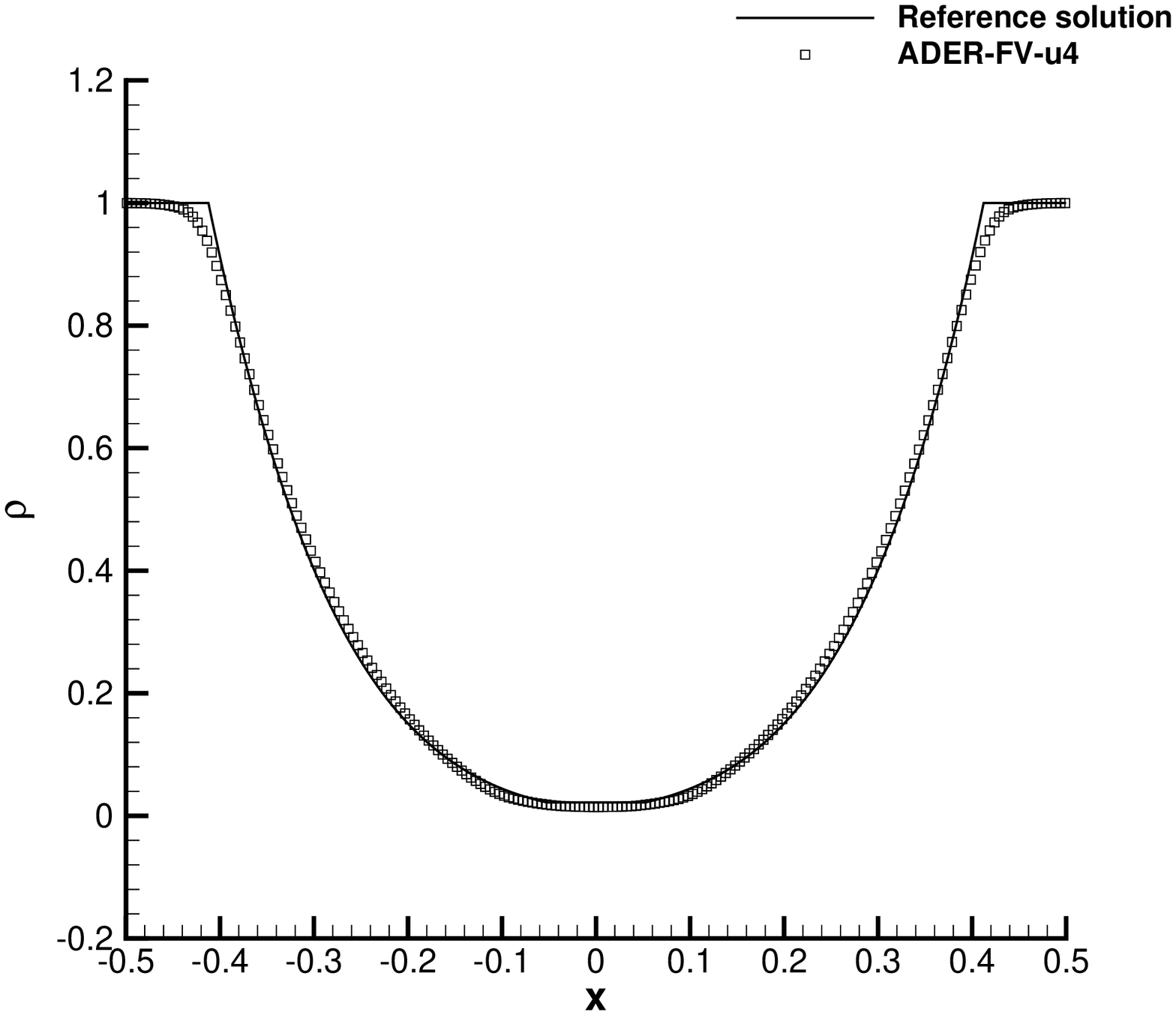}  & 
			\includegraphics[width=0.32\textwidth]{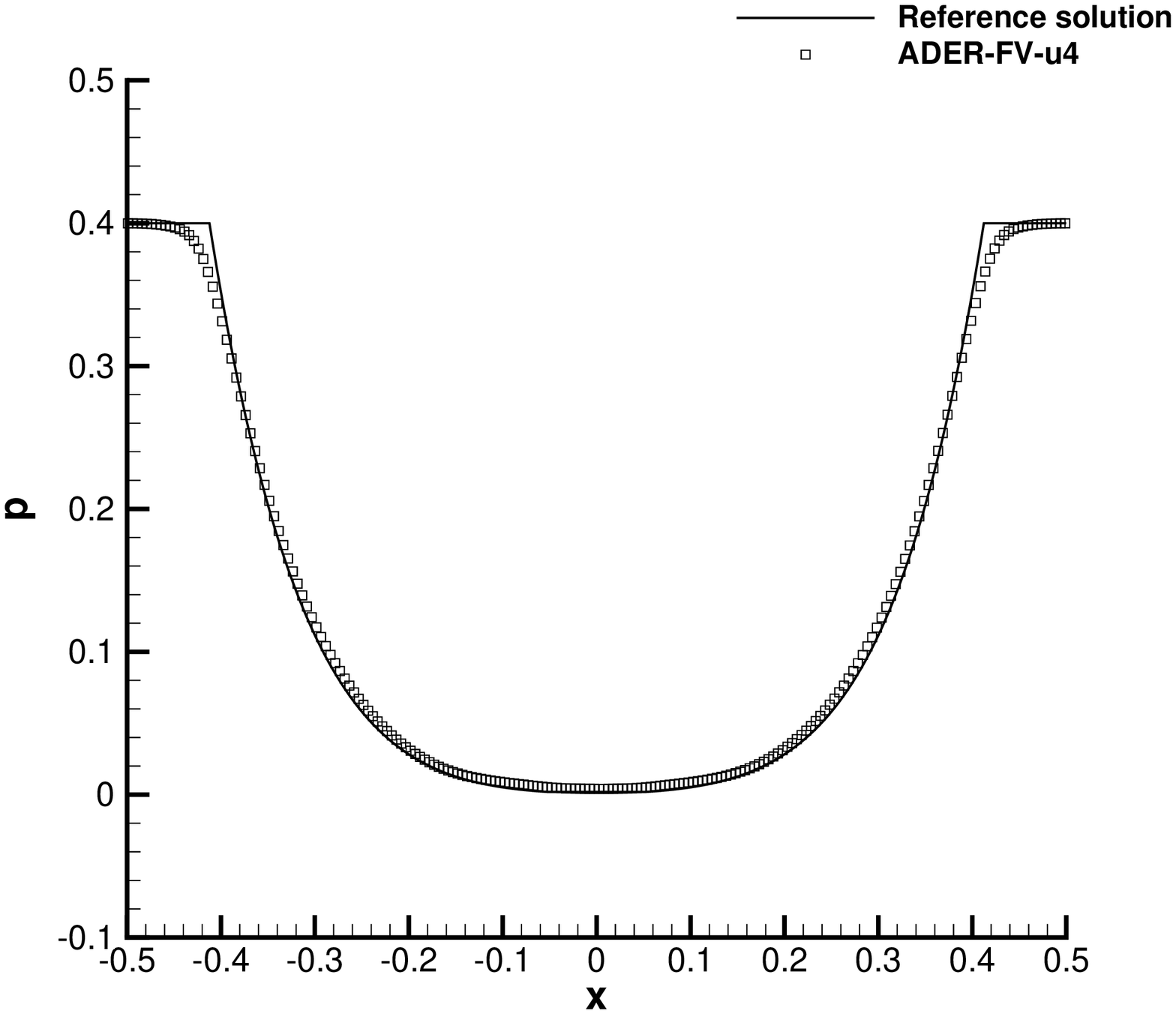}  &
			\includegraphics[width=0.32\textwidth]{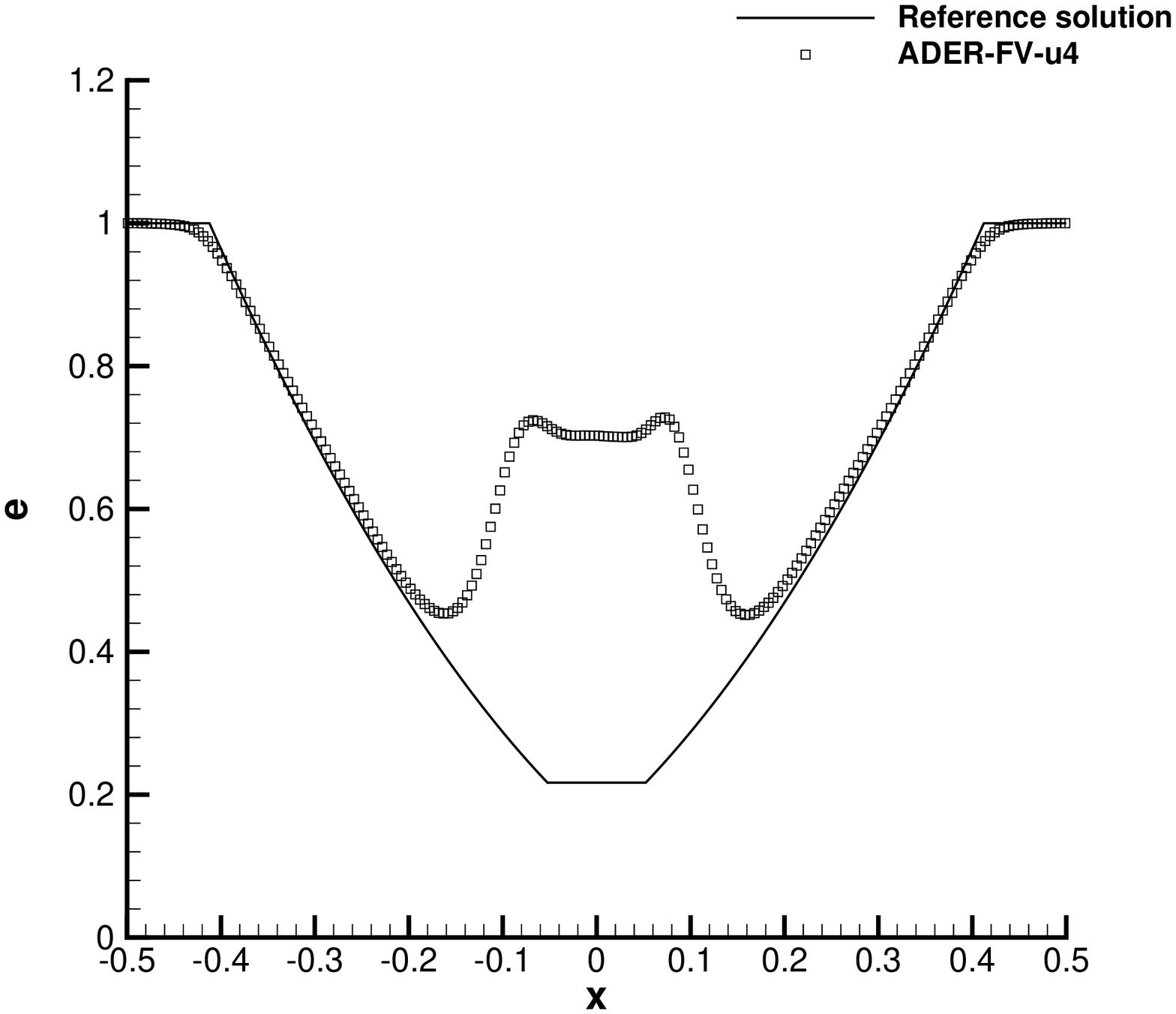} \\   
		\end{tabular} 
		\caption{Double rarefaction test (RP3) at final time $t_f=0.15$. Comparison of density, pressure and internal energy versus the reference solution for ADER-FV-u$4$ scheme.}
		\label{fig.2raref}
	\end{center}
\end{figure}
The next problem (RP3) is the one presented as Test 2 in \cite[Section 4.3.3]{toro2009riemann}. It is a double rarefaction waves which leads to very low pressure and density areas at the center of the domain. In Figure \ref{fig.2raref}, we can appreciate the capability of the scheme of maintaining positive quantities for these variables, thanks to the DOOM limiter which, at the beginning of the simulation, ensures positivity preservation in the predictor. The mismatching of the internal energy distribution is essentially due to the excessive numerical dissipation of the scheme, which could be reduced by introducing entropy preserving techniques \cite{kuzmin2020entropycg,gaburro2022high,chen2017entropy}.

\begin{figure}[!htbp]
	\begin{center}
		\begin{tabular}{ccc} 
			\includegraphics[width=0.32\textwidth]{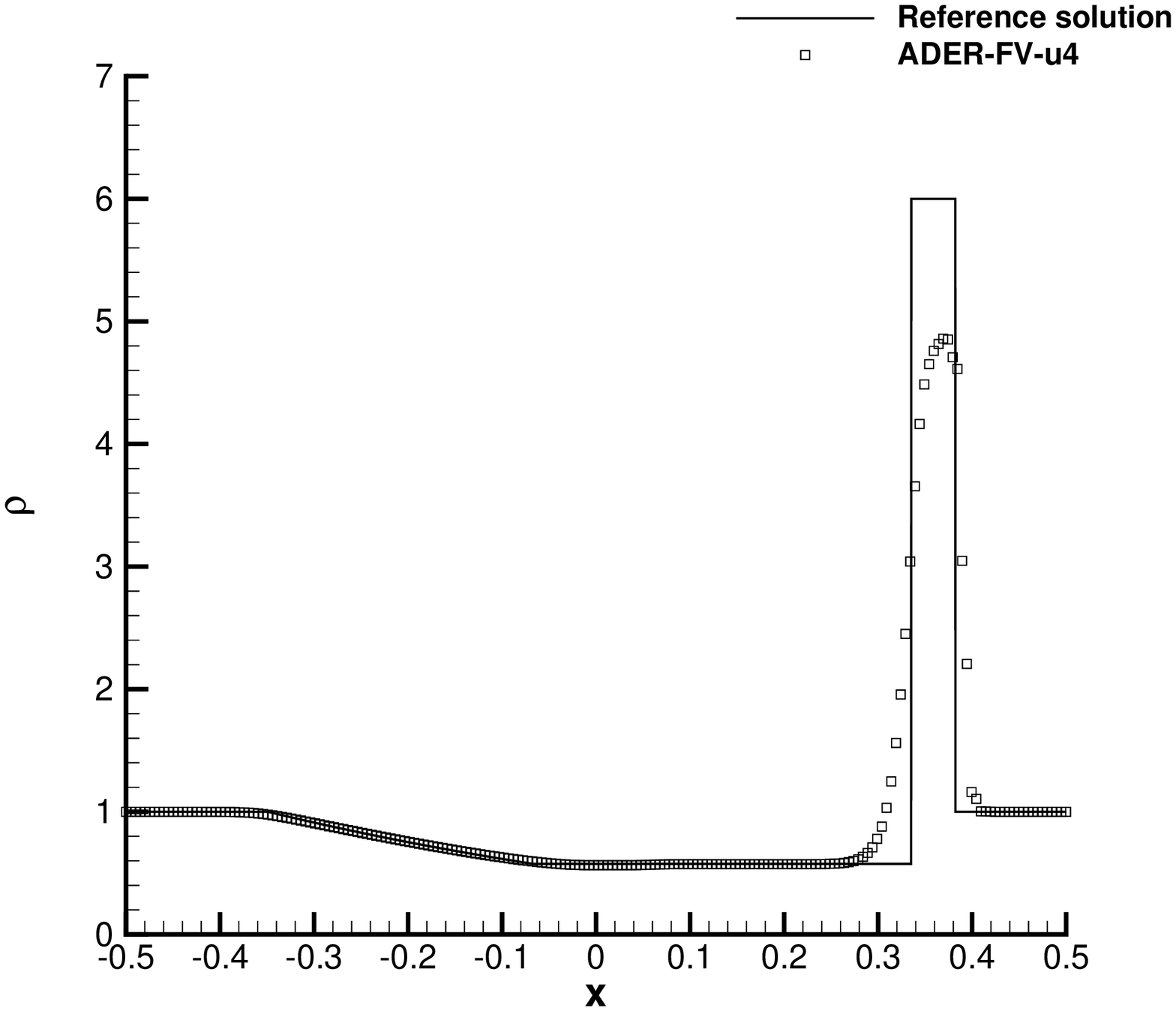}  & 
			\includegraphics[width=0.32\textwidth]{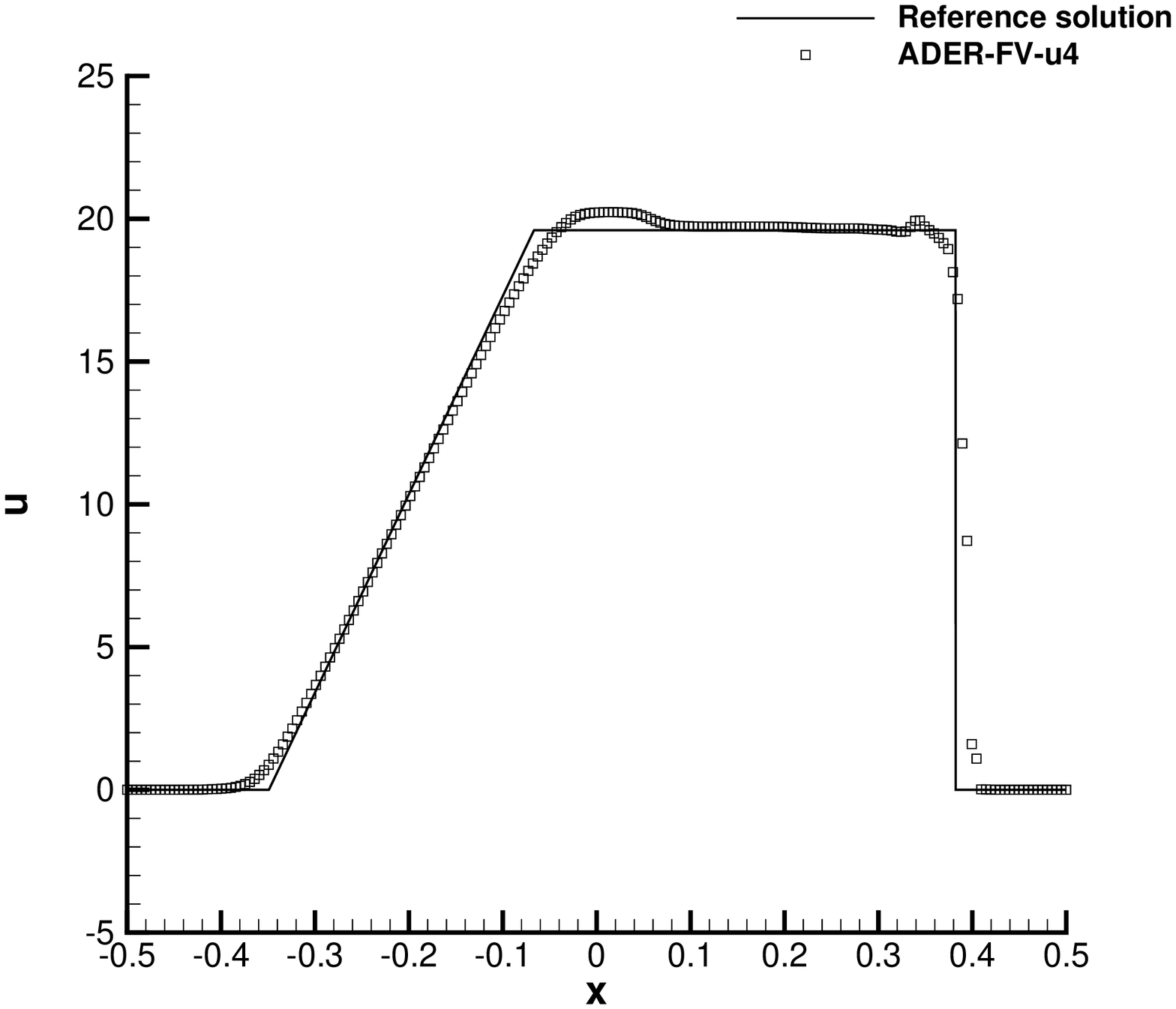}  &
			\includegraphics[width=0.32\textwidth]{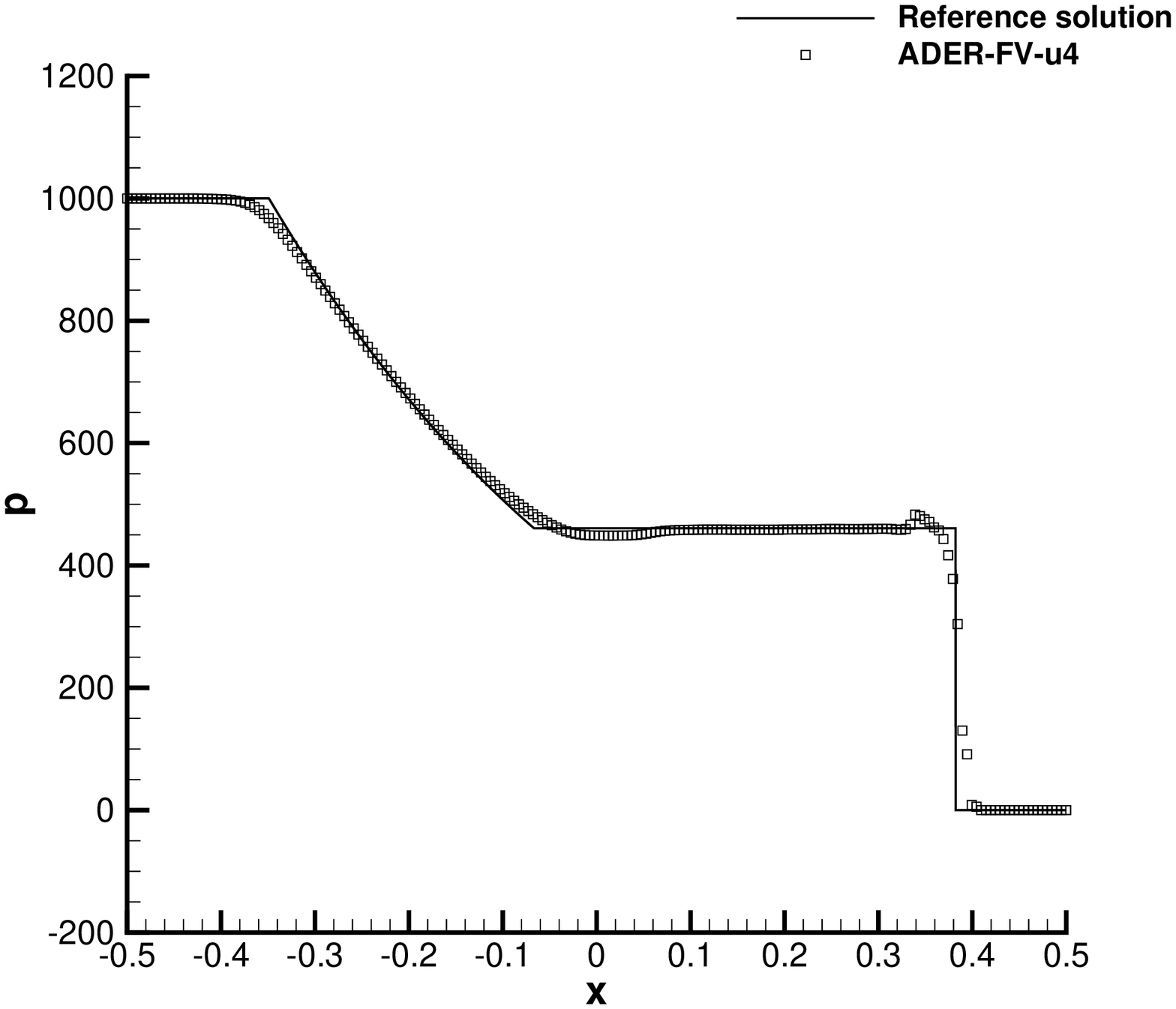} \\   
		\end{tabular} 
		\caption{Test RP4 at final time $t_f=0.012$. Comparison of velocity and pressure versus the reference solution for ADER-FV-u$4$ scheme.}
		\label{fig.ToroRP3}
	\end{center}
\end{figure}
The last Riemann problem (RP4) is the one presented as Test 3 in \cite[Section 4.3.3]{toro2009riemann}. It is a very severe test problem and it consists of a
rarefaction, a contact discontinuity and a shock. The results obtained in Figure \ref{fig.ToroRP3} are in agreement with the reference solution and the smearing around the contact discontinuity is comparable to other high order FV schemes with similar resolution.

\subsection{Viscous shock profile} \label{ssec.ShockNS}
Now, we consider an isolated viscous shock that is traveling through a medium at rest with a shock Mach number $M_s>1$ \cite{becker1922stosswelle,boscheri2022continuous,dumbser2010arbitrary,busto2021staggered,boscheri2017arbitrary}, thus we solve the compressible Navier-Stokes equations \eqref{eq:NS}. The analytical solution and the details to compute it can be found in \cite{becker1922stosswelle}, where the stationary shock wave at Prandtl number $\textnormal{Pr}=0.75 $ is resolved with constant viscosity. The computational domain is $\Omega = [0,1]\times [0,0.2]$, which is discretized by $N_h=1120$ Voronoi elements. On the left side of the domain a constant inflow velocity is prescribed, while outflow boundary conditions are assumed at the right of the domain. Periodic boundary conditions are, instead, assigned to the top/bottom boundaries.
The initial condition consists of a shock wave centered at $x=0.25$ propagating at Mach
$M_s = 2$ from left to right with a Reynolds number $\textnormal{Re} = 100$, thus the viscosity coefficient is set to $\mu  = 2 \cdot 10^{-2}$. The upstream shock state is defined such that the adiabatic sound speed is $c_0=1$. The final time of the simulation is $t_f = 0.2$ with the shock front located at $x = 0.65$.
\begin{figure}[!htbp]
	\begin{center}
		\begin{tabular}{cc} 
			\multicolumn{2}{c}{\includegraphics[width=0.7\textwidth]{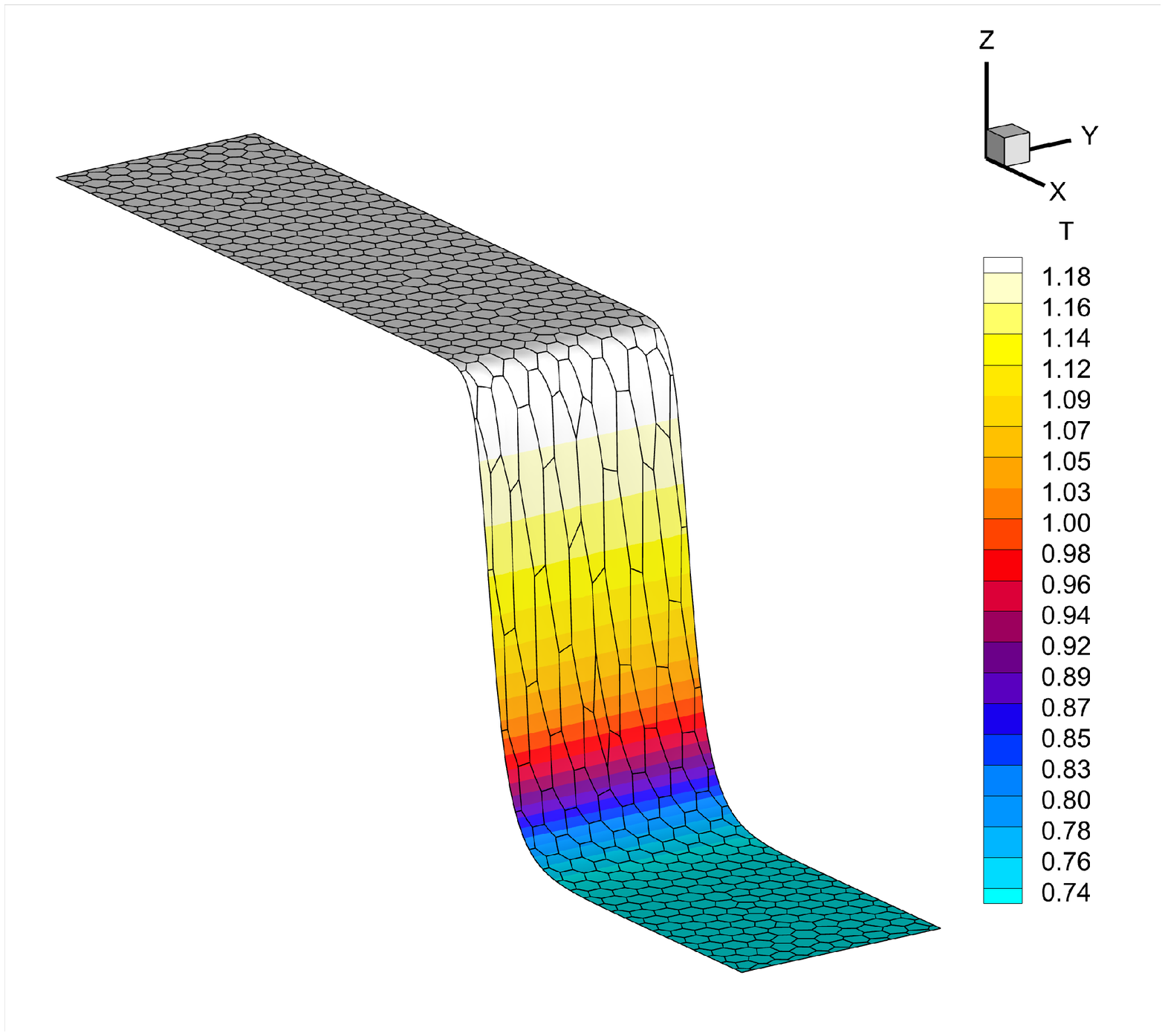}} \\
			\includegraphics[width=0.47\textwidth]{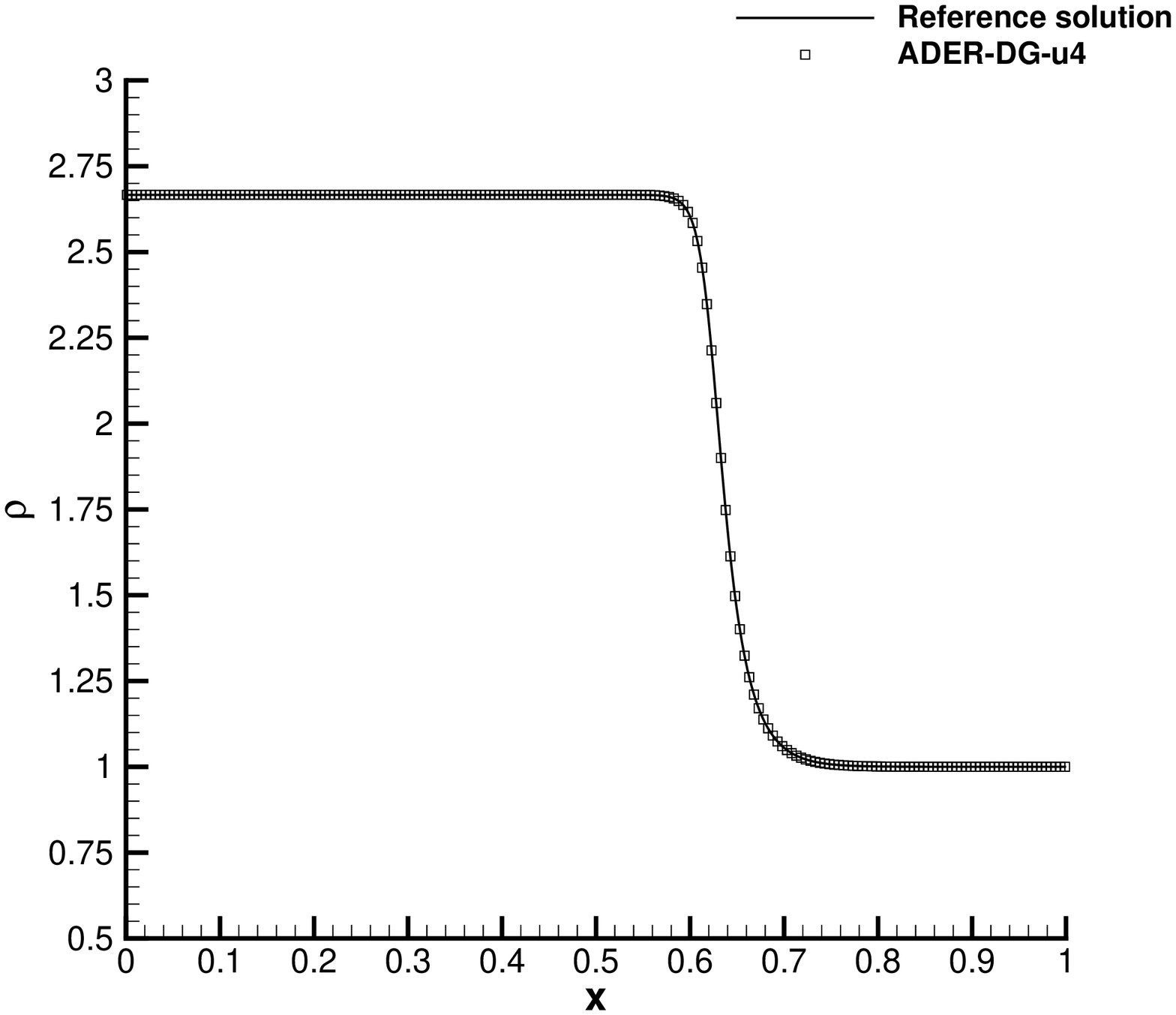} & 
			\includegraphics[width=0.47\textwidth]{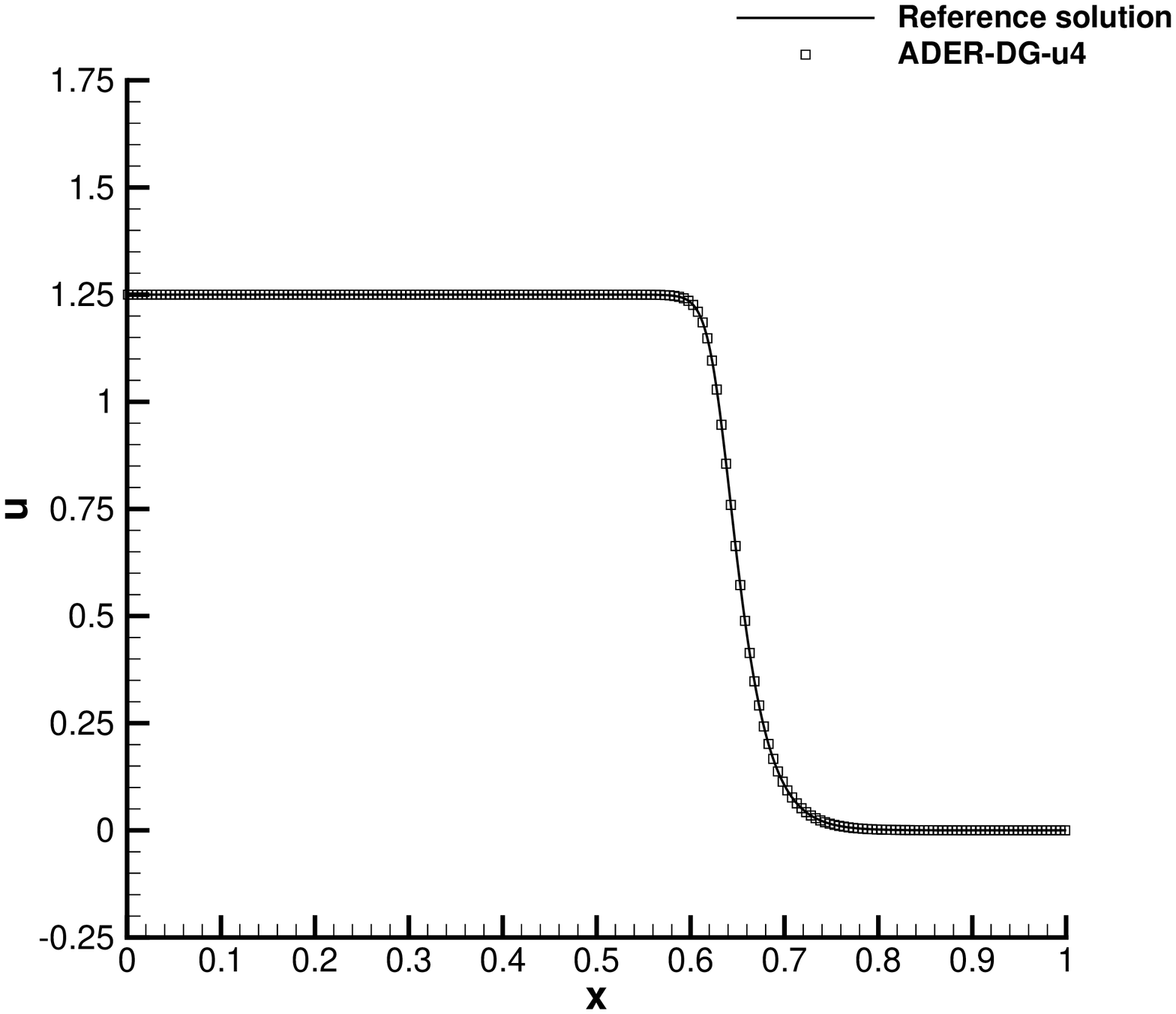} \\
			\includegraphics[width=0.47\textwidth]{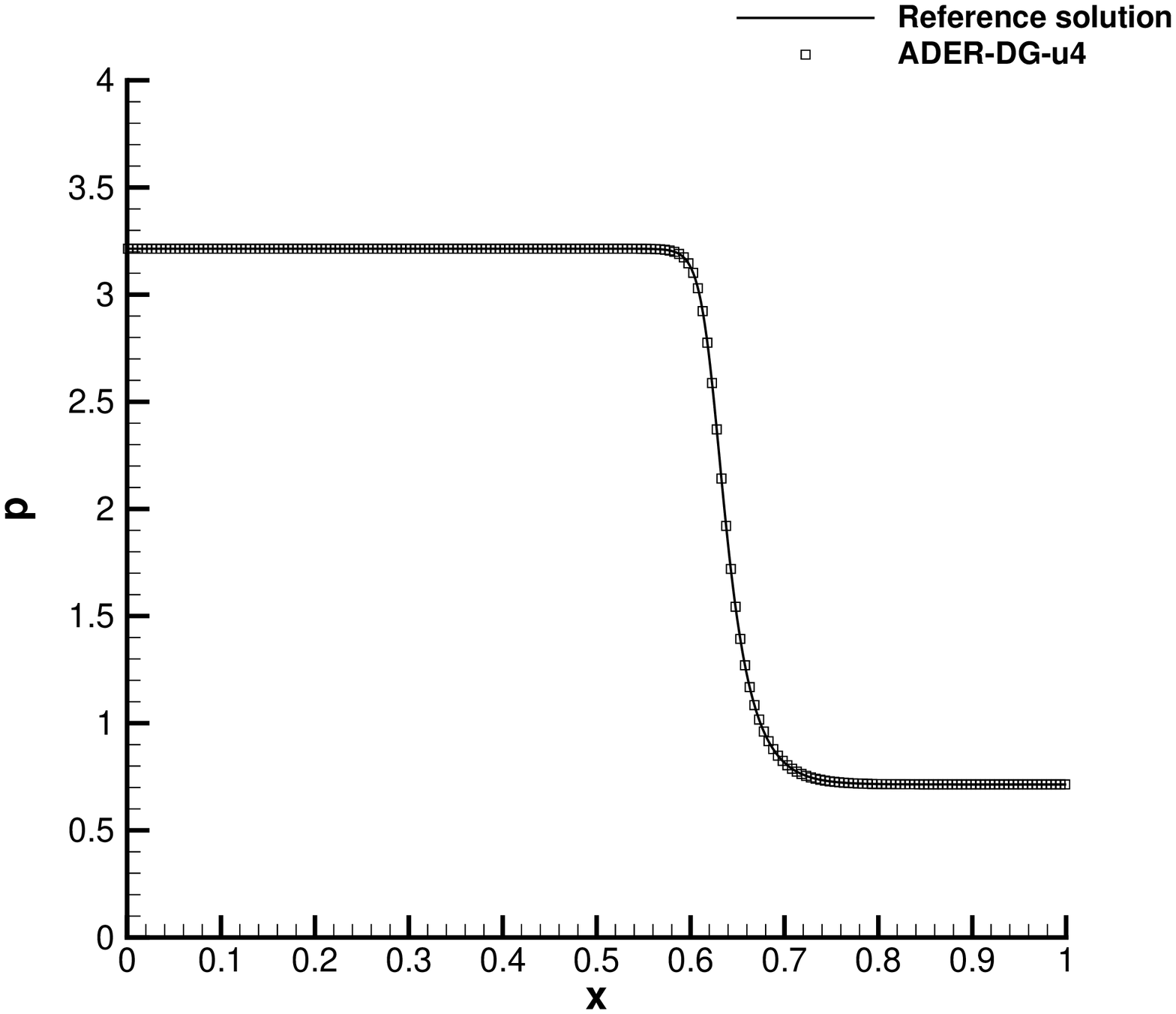} & 
			\includegraphics[width=0.47\textwidth]{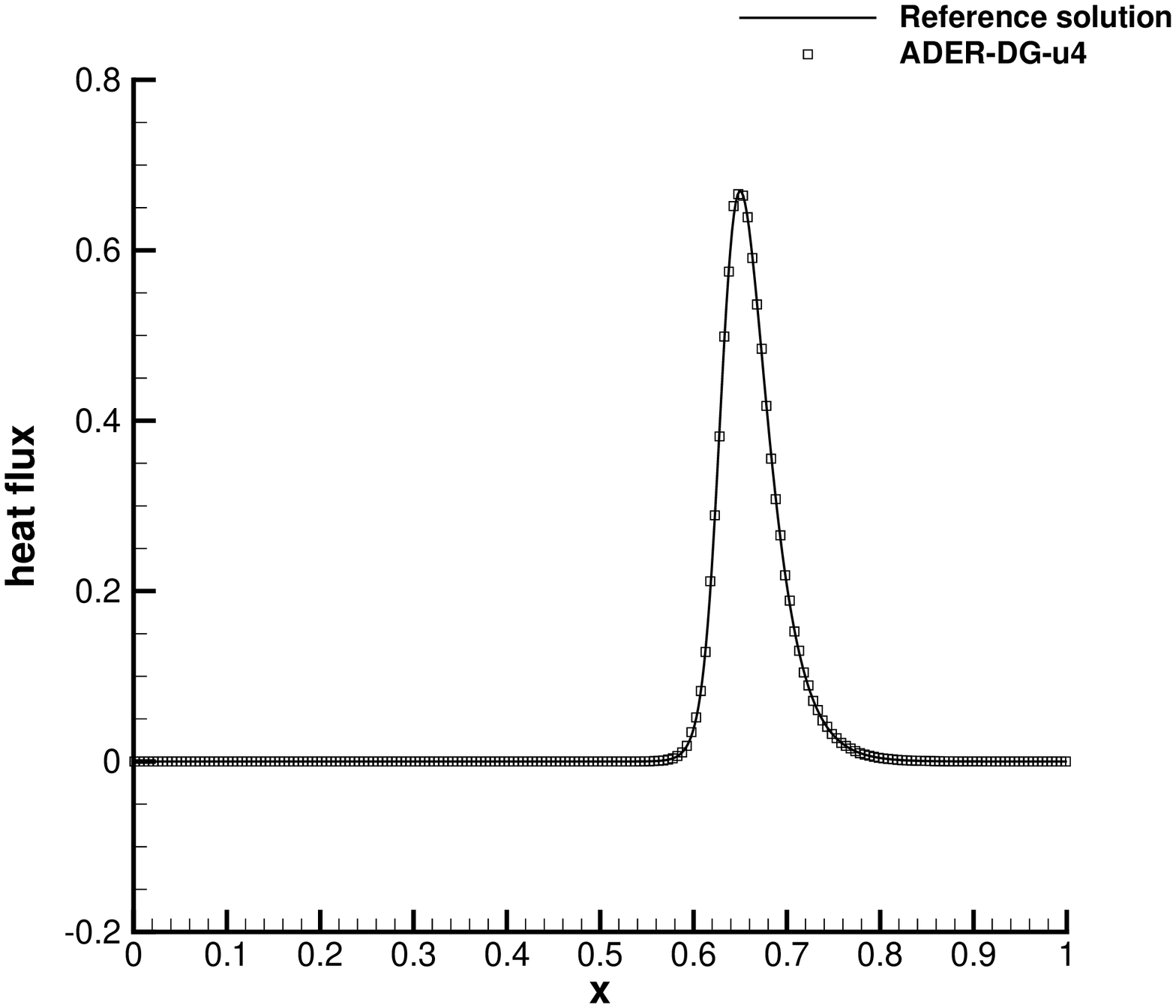} \\
		\end{tabular} 
		\caption{Viscous shock profile with shock Mach number $M_s=2$ and Prandtl number $Pr=0.75$ at time $t_f=0.2$. Top panel: Voronoi tessellation and temperature distribution along the $z-$axis. Fourth order numerical solution with ADER-DG-u scheme compared against the reference solution for density, horizontal velocity, pressure and heat flux (from middle left to bottom right panel): in particular, we show a one-dimensional cut of 200 equidistant points along the $x-$direction at $y=0.1$.}
		\label{fig.ViscousShock}
	\end{center}
\end{figure}
We run the simulations with ADER-DG-u($4$). Since the solution is smooth, nothing is checked along the DOOM procedure. Qualitatively, we see in Figure \ref{fig.ViscousShock} that there is an excellent agreement between the numerical solution and the analytical one.
We underline that this
test case allows all terms contained in the Navier-Stokes system to be properly checked,
since advection, thermal conduction and viscous stresses are present.

\subsection{2D Taylor-Green vortex} \label{ssec.TGV}
A classical test case for the incompressible Navier--Stokes equations is the Taylor--Green vortex problem. In two dimensions, the exact solution is known and it is given on the domain $\Omega = [0,2\pi]^2$ with periodic boundary conditions by 
\begin{equation}
	\begin{split}
		u(\uvec{x},t) = &\sin(x)\cos(y) e^{-2\nu t},\\
		v(\uvec{x},t) = -&\cos(x)\sin(y) e^{-2\nu t},\\
		p(\uvec{x},t) = C&+\frac14 (\cos(2x)\cos(2y))e^{-4\nu t},
	\end{split}	
\end{equation}
with $\nu = \frac{\mu}{\rho}$ the kinematic viscosity and $\mu=10^{-2}$. In this test, we also validate the quality of the scheme in a low Mach regime. Hence, the additive constant for the pressure is chosen as $C=100/\gamma$ and the density is set at the beginning as $\rho(\uvec{x},0)\equiv 1$. For this test, heat conduction is neglected, i.e., $\kappa=0$.
The mesh is discretized by $N_h=2916$ cells and the final time is set at $t_f=1$. We use the ADER-DG-u method with order 4 for this simulation without checks in the DOOM procedure. The results are depicted in Figure \ref{fig.TGV2D}, which are compared against the analytical solutions, obtaining an excellent matching.
\begin{figure}[!htbp]
	\begin{center}
		\begin{tabular}{cc} 
			\includegraphics[width=0.47\textwidth]{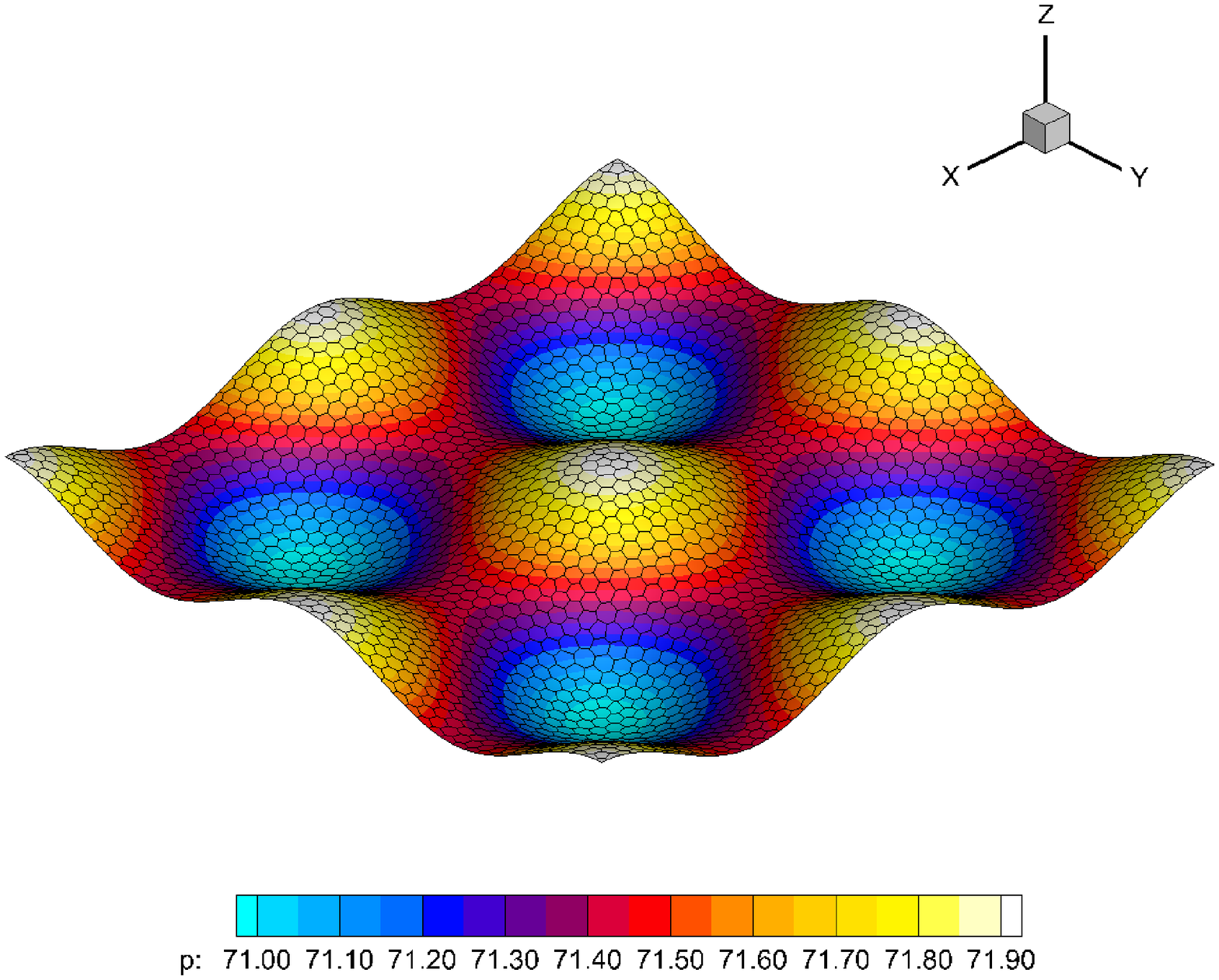} & 
			\includegraphics[width=0.47\textwidth]{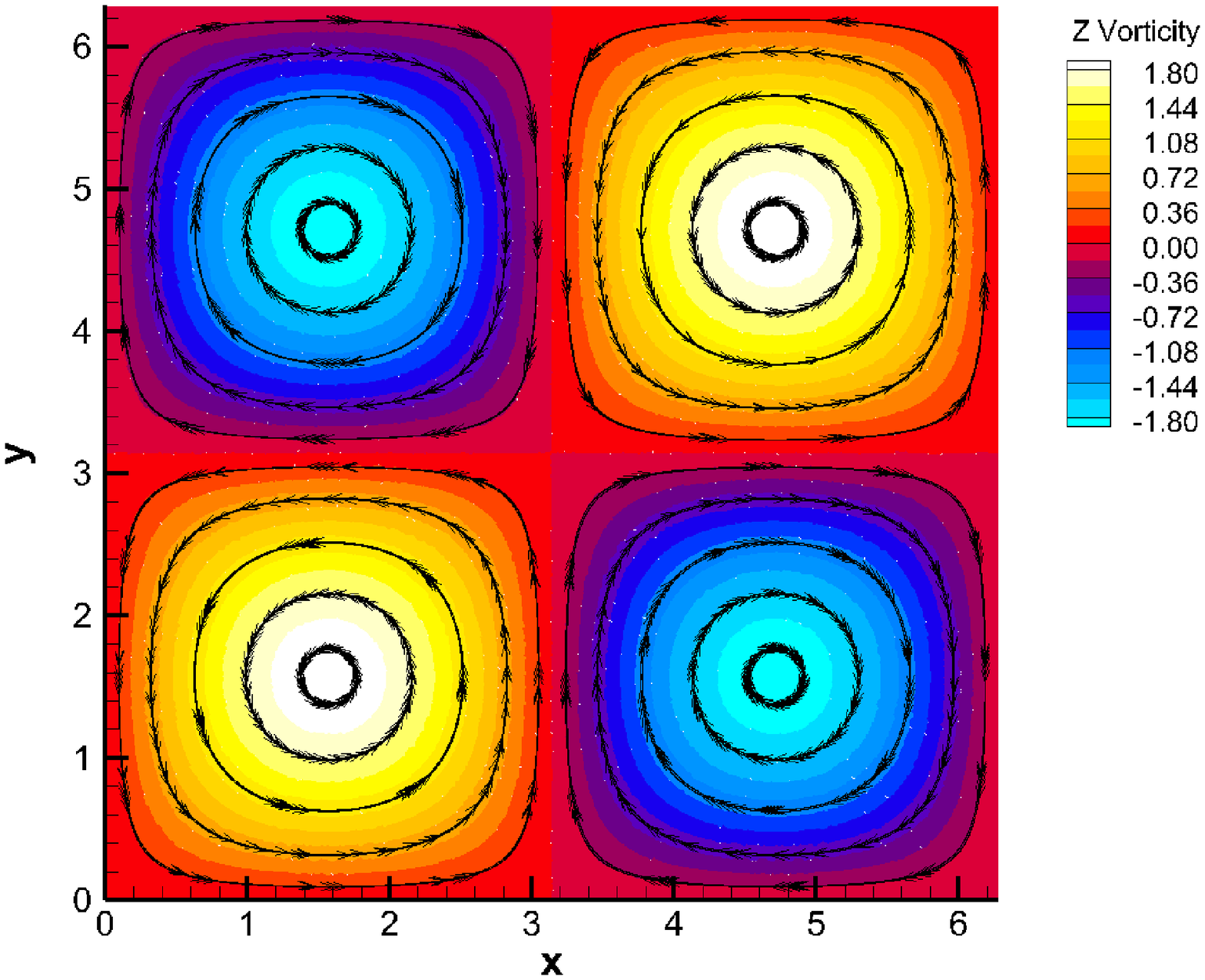} \\
			\includegraphics[width=0.47\textwidth]{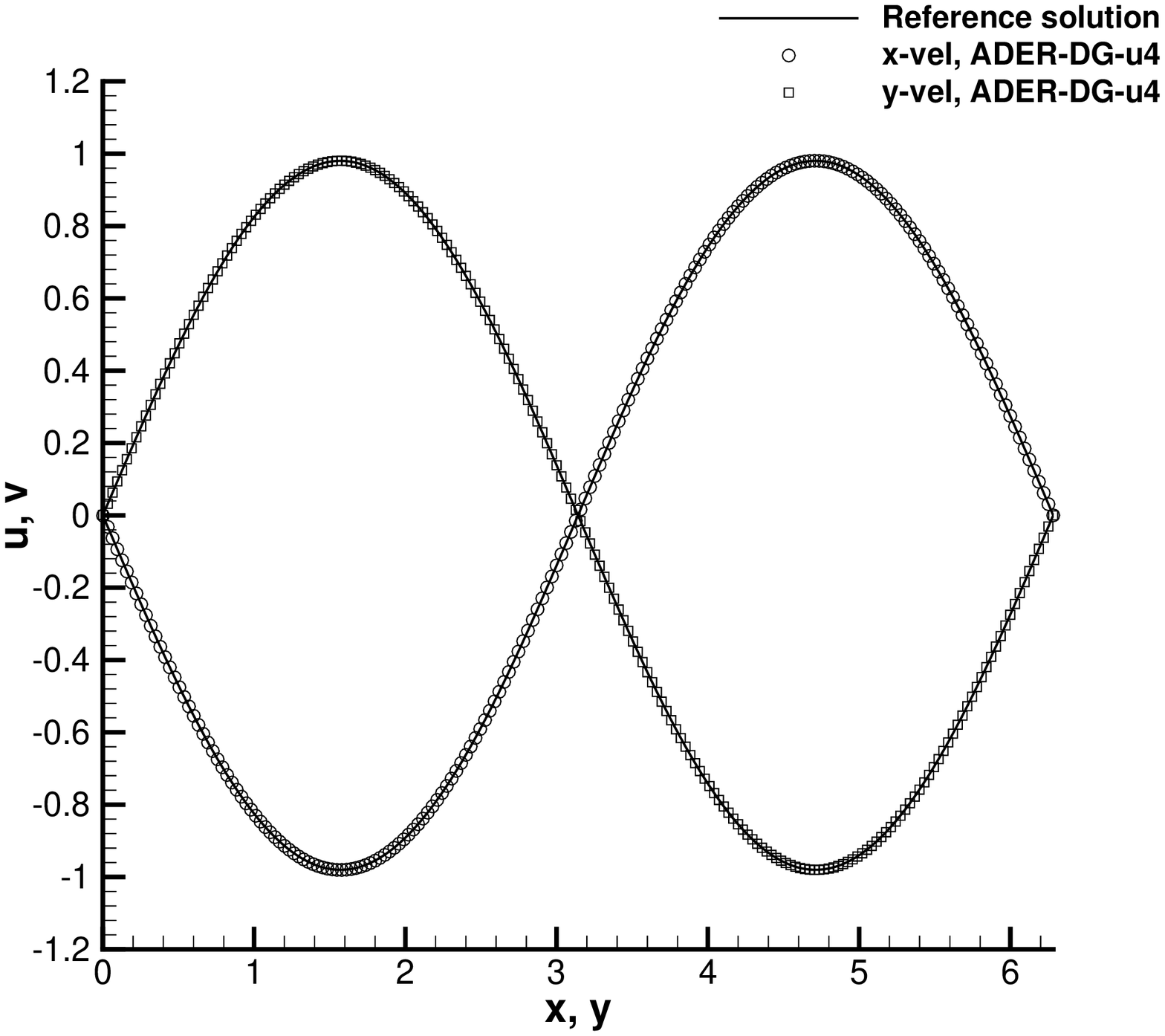} & 
			\includegraphics[width=0.47\textwidth]{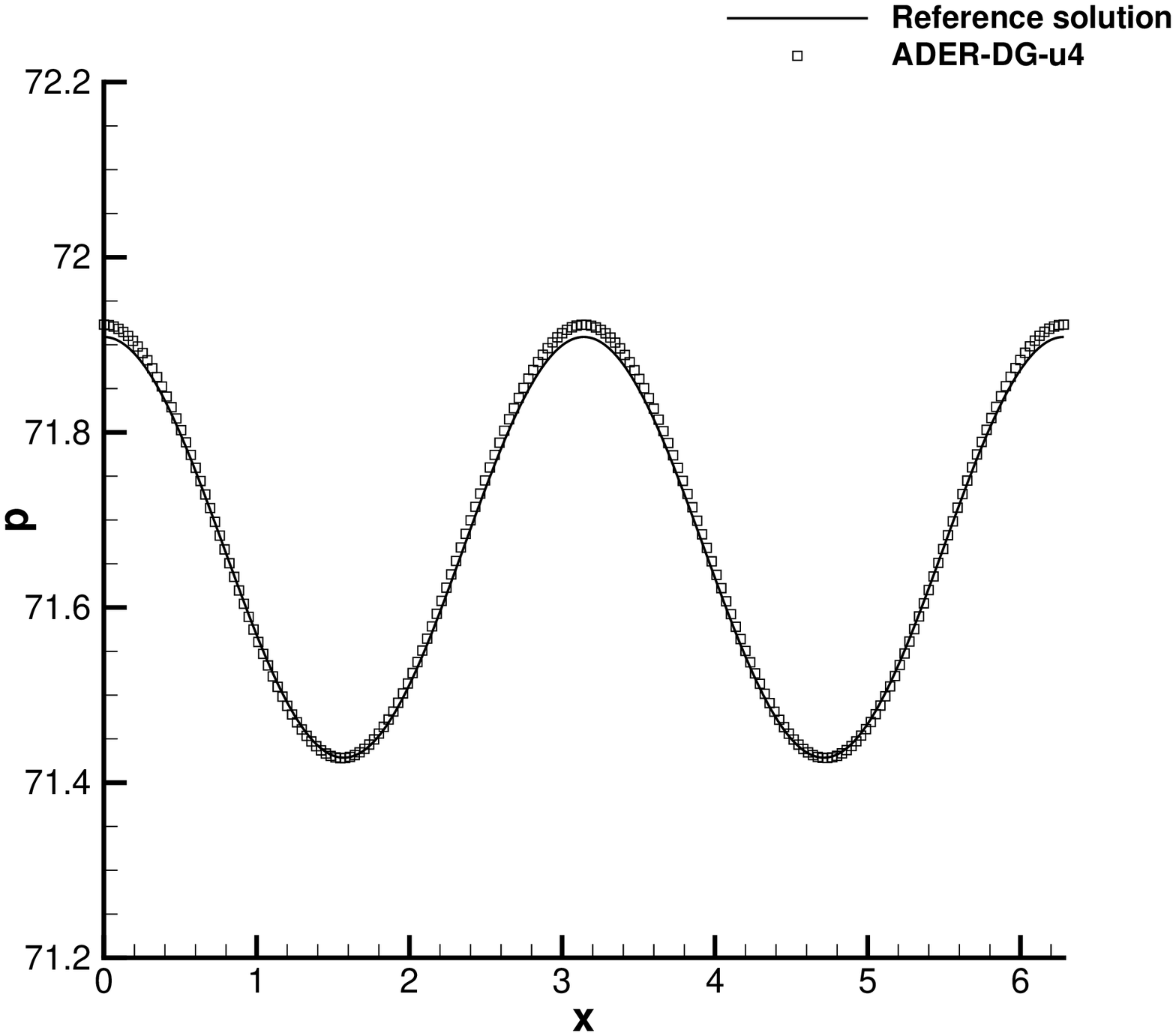} \\
		\end{tabular} 
		\caption{2D Taylor-Green vortex at time $t_f=1$ with viscosity $\mu=10^{-2}$. Exact solution of the Navier--Stokes equations and fourth order numerical solution with ADER-DG-u$4$ scheme. Top: mesh configuration with pressure distribution (left) and $z-$vorticity with stream-traces (right). Bottom: one-dimensional cut of 200 equidistant points along the $x$-axis and the $y-$axis for the velocity components $u$ and $v$ (left) and for the pressure $p$ (right).}
		\label{fig.TGV2D}
	\end{center}
\end{figure}

We also compare the numerical results with the ADER-DG scheme, and the errors are reported in Table \ref{tab.tgv} as well as the computational time. We observe that the errors are almost the same for both method, while the novel ADER-DG-u scheme is 2.5 times faster than the classical ADER-DG.

\begin{table}[!htbp]  
	\caption{Error analysis for the Taylor-Green vortex using both ADER-DG-u and ADER-DG schemes with fourth order of accuracy in space and time. The errors are measured in $L_2$ and $L_{\infty}$ norms and refer to the variables $\rho$ (density) and horizontal velocity $u$ at the final time $t_{f}=1$. The computational time measured in seconds is also reported.}  
	\begin{center} 
		\begin{small}
			\renewcommand{\arraystretch}{1.1}
			\begin{tabular}{c|cc|cc|c}
				\multirow{2}{*}{Scheme} & \multicolumn{2}{c|}{Density ($\rho$)} & \multicolumn{2}{c|}{Velocity ($u$)} & CPU time \\
				& $L_2$ & $L_{\infty}$ & $L_2$ & $L_{\infty}$ & [s] \\ 
				\hline
				ADER-DG-u4 & 8.950E-03 & 3.305E-03 & 1.604E-03 & 6.112E-04 & 1.154E+04 \\
				ADER-DG4 \phantom{-u} & 8.950E-03 & 3.305E-03 & 1.604E-03 & 6.112E-04 & 2.706E+04 \\
			\end{tabular}
		\end{small}
	\end{center}
	\label{tab.tgv}
\end{table}

\subsection{Compressible mixing layer} \label{ssec.MixLayer}
Finally, we test the novel ADER-DG-u4 on the unsteady compressible mixing layer studied in~\cite{Colonius}. The two-dimensional computational domain is the rectangular box $\Omega=[-200,200] \times [-50,50]$, and a total number of $N_h=15723$ polygonal Voronoi cells compose the computational mesh. The initial condition of the flow is given by two fluid layers moving with different velocities along the $x-$direction, that is
\begin{eqnarray}
\rho(\uvec{x},0) &=& \rho_0= 1, \nonumber \\
\uvec{v}(\uvec{x},0)&=&\uvec{v}_0 = \left( \begin{array}{c}
\frac{1}{8} \tanh(2y) + \frac{3}{8} \\ 0
\end{array} \right), \\
 p(\uvec{x},0) &=& p_0 = \frac{1}{\gamma}. \nonumber
\end{eqnarray} 
The free stream velocities are imposed as boundary conditions in the $y-$direction, thus we set $u_{+\infty}=0.5$ and $u_{-\infty}=0.25$ for $y \to + \infty$ and $y \to -\infty$, respectively. Along the $x-$direction, at the right side is simply assign an outflow boundary, whereas the left side is given a time-dependent inflow boundary condition with a perturbation $\delta(y,t)$:
\begin{eqnarray}
\rho(0,y,t) &=& \rho_0 + 0.05 \, \delta(y,t), \nonumber \\
\uvec{v}(0,y,t) &=& \uvec{v}_0 + \left( \begin{array}{c}
1.0 \\ 0.6
\end{array} \right)\, \delta(y,t), \\
\quad p(0,y,t) &=& p_0 + 0.2 \, \delta(y,t). \nonumber 
\end{eqnarray}
The function $\delta(y,t)$ is given by
{\small
\begin{eqnarray}
\delta(y,t) &=& -10^{-3} \exp(-0.25 y^2) \cdot \nonumber \\
            &\phantom{=}&\left[ \cos(\omega t) + \cos\left(\frac{1}{2}\omega t -0.028\right) + \cos\left(\frac{1}{4}\omega t +0.141\right) + \cos\left(\frac{1}{8}\omega t +0.391\right) \right], \nonumber \\
            &\phantom{=}&
\end{eqnarray}}
with the fundamental frequency of the mixing layer $\omega = 0.3147876$. The compressible Navier-Stokes equations are considered with viscosity coefficient $\mu=10^{-3}$ and no heat conduction ($\kappa=0$). The final time is $t_f=1596.8$ and the DG solution is depicted in Figure \ref{fig.MixLayer} at three different output times. The vorticity of the flow field is shown, demonstrating the capability of the novel methods to capture the complex vortical structures generated by the perturbation assigned at the inflow of the channel.

\begin{figure}[!htbp]
	\begin{center}
		\begin{tabular}{cc} 
			\includegraphics[trim= 5 5 5 5, clip, width=0.9\textwidth]{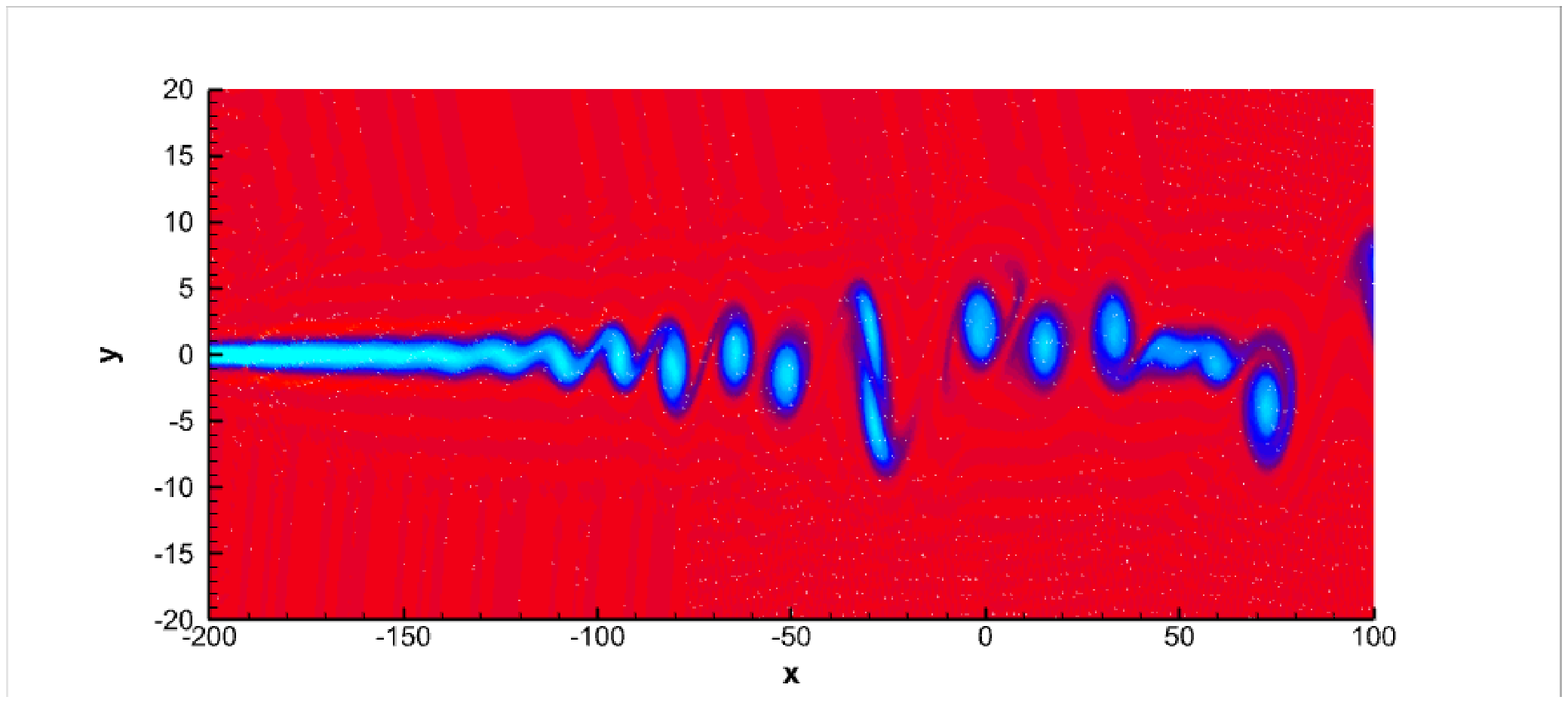}\\				
			\includegraphics[trim= 5 5 5 5, clip, width=0.9\textwidth]{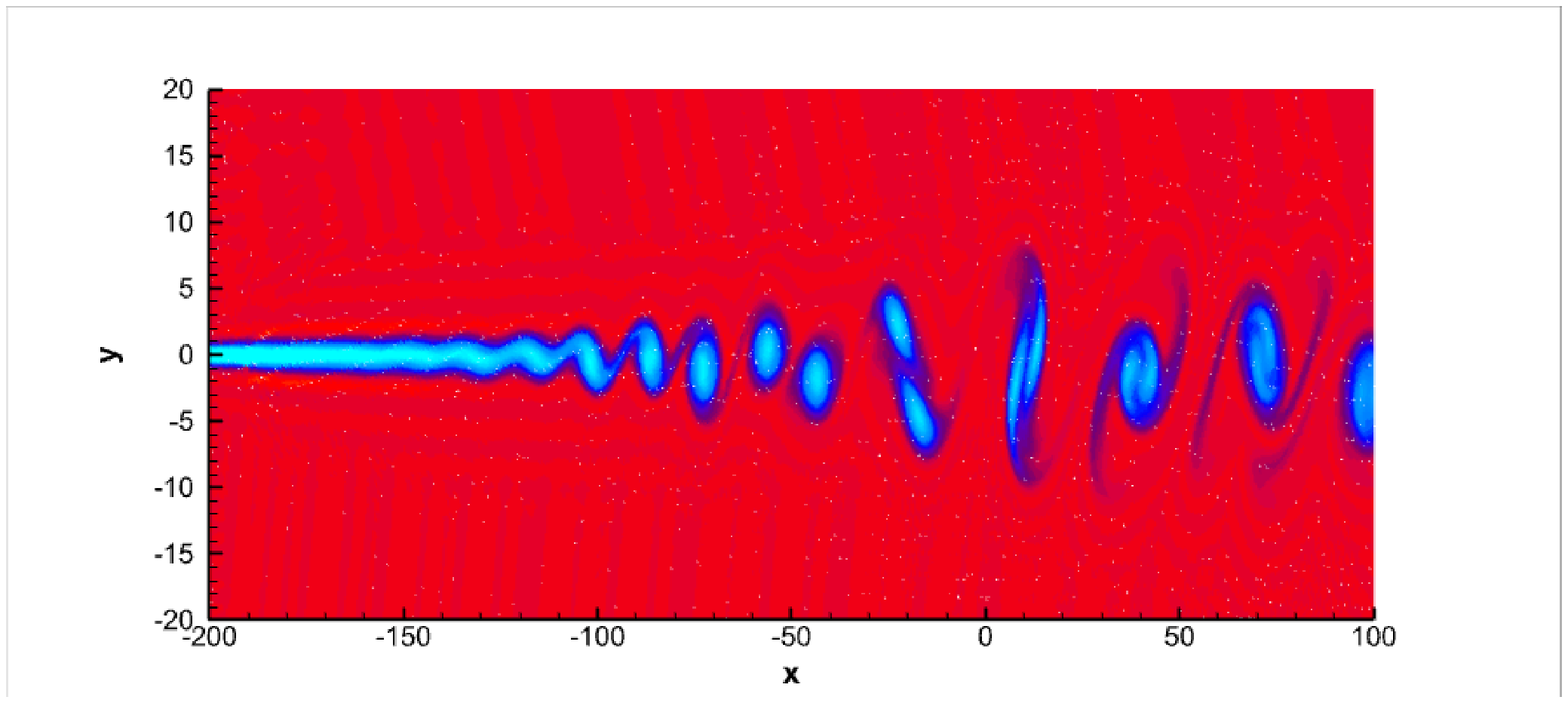}\\
			\includegraphics[trim= 5 5 5 5, clip, width=0.9\textwidth]{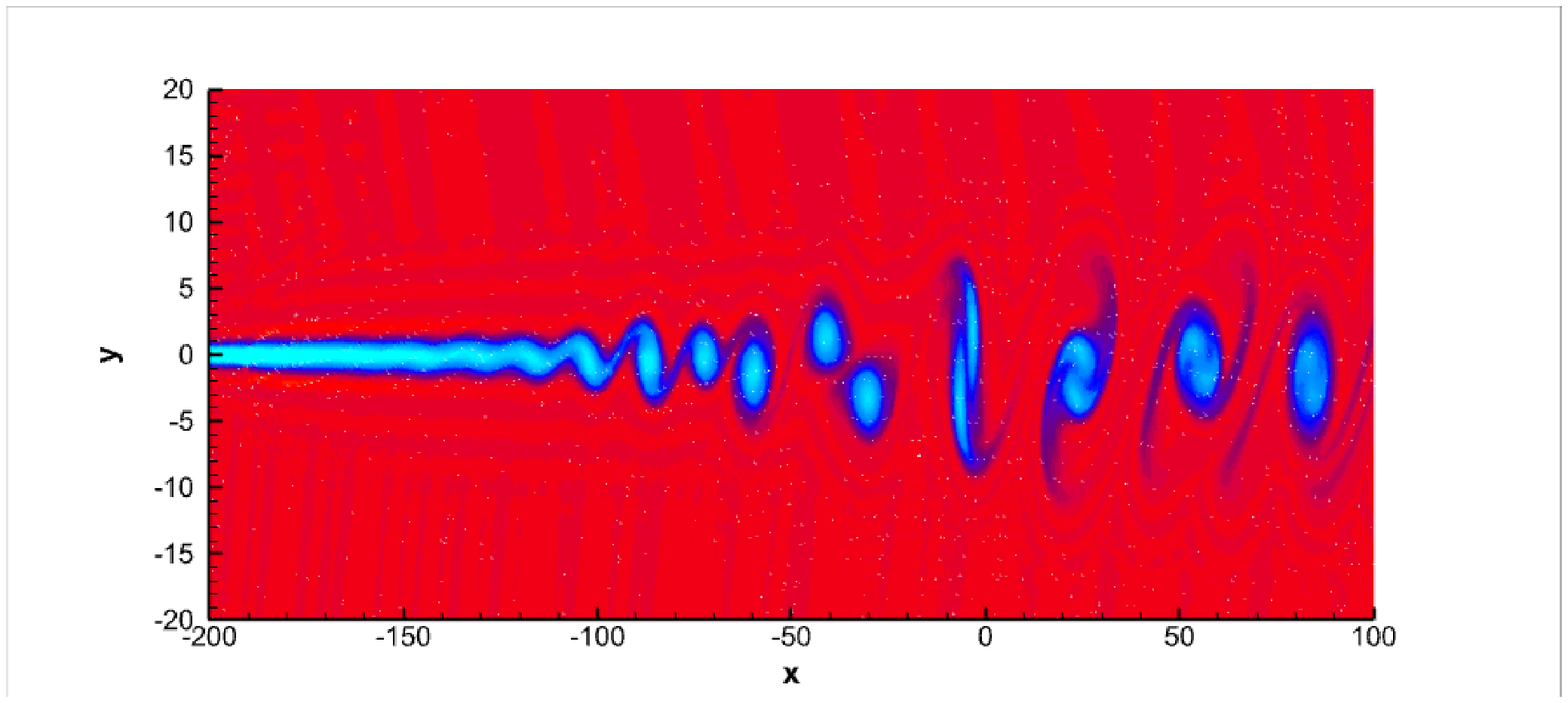}\\				
		\end{tabular} 
		\caption{Compressible mixing layer at time $t=500$, $t=1000$ and $t=1596.8$ (from top to bottom row). Fourth order numerical results with ADER-DG-u for $z-$vorticity. 51 contour levels in the range $[-0.12,0.12]$ have been used for plotting the vorticity distribution on the sub-domain $[-200,100]\times[-20,20]$.}
		\label{fig.MixLayer}
	\end{center}
\end{figure}

\section{Conclusions and further developments}\label{chap:Conc}
To sum up, generalizing the idea proposed in \cite{loredavide}, we have introduced a new framework for the construction of efficient p-adaptive arbitrary high order methods, based on the modification of underlying arbitrary high order iterative schemes.
Specifically, the accuracy of the discretization is progressively increased with the number of iterations, gaining one order of accuracy at each iteration. 
Given an implementation of an iterative arbitrary high order method, the novel technique is easy to include and it gives a remarkable advantage in terms of computational costs.
Moreover, in this context, p-adaptivity can be achieved very naturally inserting some criteria to stop the iterations.
We showed an application to ADER-DG, designing the new efficient \APNPM\!\!-u methods.
In particular, in the ADER-FV-u context ($N=0$), we have proposed DOOM, an \textit{a posteriori} limiter, that is able to preserve the physical properties of the solution (i.e. positivity of density and pressure) obtaining the maximum admissible order of accuracy that guarantees these physical constraints to be respected.
In this framework, there is a huge advantage with respect to similar \textit{a posteriori} limiters, e.g. MOOD \cite{CDL1}, as DOOM is waste--free, i.e., all the computations are useful either for increasing the order of accuracy or for detecting a troubled state. 
In the numerical tests, we have solved Euler and compressible Navier-Stokes equations very robustly, provably keeping the positivity of density and pressure, and with computational costs up to 4 times smaller than the original method.

We believe that the proposed framework is very versatile and can improve many arbitrary high order methods on different sides: reducing the computational costs, easily providing p-adaptivity in a very efficient and natural way without wasting computed solutions, and helping obtaining structure preserving solutions.
\RIIcolor{
The authors are currently working on the application of the novel framework to obtain: adaptive methods that converge to the analytical solution up to a given tolerance and hp-adaptive methods introducing local mesh refinements in non-smooth regions. 
We also aim at further investigations on the implicit version of the schemes obtained by choosing a low order implicit operator $\lopdt^1$ in the DeC formulation and on structure-preserving.
}


\subsubsection*{Acknowledgments}
LM has been funded by the SNF grant 200020\_204917 ``Structure preserving and fast methods for hyperbolic systems of conservation laws''. DT has been funded by a SISSA Mathematical Fellowship. WB received financial support by Fondazione Cariplo and Fondazione CDP
(Italy) under the grant No. 2022-1895.

\subsubsection*{Conflict of interest}
On behalf of all authors, the corresponding author states that there is no conflict of interest. 

\subsubsection*{Compliance with Ethical Standards}
On behalf of all authors, the corresponding author is available to collect documentation of compliance with ethical standards and send upon request.

\subsubsection*{Funding}
This work received financial support by the Swiss National Foundation (Switzerland), Fondazione Cariplo and CDP (Italy), and Scuola Internazionale Superiore di Studi Avanzati (Italy).

\appendix
\section{DG modal Taylor basis functions}\label{app:basis_function}
The basis functions $\left\lbrace \trialspacetime{\ell}(\uvec{x},t) \right\rbrace_{\ell=1,\dots,L} $ used to span the predictor polynomial spaces in this work are modal Taylor basis functions. As already said, they are the tensor products of space basis functions $\left\lbrace \trialspace{i}(\uvec{x}) \right\rbrace_{i=1,\dots,I} $ and time basis functions $\left\lbrace \trialtime{m}(t) \right\rbrace_{m=0,\dots,M}.$

The spatial basis functions $\left\lbrace \trialspace{i}(\uvec{x}) \right\rbrace_{i=1,\dots,I} $ of degree at most $M$ in $D$ dimensions are defined locally for each element $K$. Denoting by $\uvec{x}_K$ the barycenter of the element, they can be defined as
\begin{equation}
\trialspace{\alpha} (\uvec{x}):= \prod_{d=1}^{D}  \frac{(x_d-x_{K,d})^{\alpha_d}}{\alpha_d! h_K^{\alpha_d}},\quad 0\leq \abs{\alpha} \leq M,
\end{equation}
where $h_K=\sqrt[D]{\abs{K}}$ is the characteristic mesh size of the element $K$, used to rescale the functions to agree with the Taylor expansion terms, while $\alpha$ is a $D$-dimensional multi-index with $\abs{\alpha}=\sum_{d=1}^D\alpha_d$. In practice, we identify the multi-index $\alpha$ as a single index $i=1,\dots,I$ with $I={{M+D}\choose{D}}$ via a bijection giving $i = i(\alpha)$. 

The time basis functions $\left\lbrace \trialtime{m}(t) \right\rbrace_{m=0,\dots,M}$ of degree at most $M$ are defined in a similar fashion, but with respect to a scalar argument only, over $[t_{n},t_{n+1}]$
\begin{equation}
\trialtime{m} (t):=  \frac{(t-t_{n})^{m}}{m! \Delta t^{m}},\quad 0\leq m \leq M.
\end{equation}

Finally, the tensor product between the two functional spaces gives the space-time basis functions $\lbrace\trialspacetime{\ell}(\uvec{x},t)\rbrace_{\ell=1,\dots,L}$. For full reproducibility, we specify that accuracy $M+1$ has been achieved selecting space-time basis functions  up to degree $M$ only.
Also for the spaces $\lbrace\trialspacetimeiter{\ell}{(p)}(\uvec{x},t)\rbrace_{\ell=1,\dots,L^{(p)}}$ we have used basis functions up to degree $p$ only.

\begin{remark}[On the ordering of the space-time basis functions]
The novel approach is based on the adoption of iteration-specific bases $\lbrace\trialspacetimeiter{\ell}{(p)}(\uvec{x},t)\rbrace_{\ell=1,\dots,L^{(p)}}$ of increasing degree. 
For modal bases, the introduction of higher order modes is simply performed by considering higher order terms in the space-time polynomial expansion. Therefore, in the context of an efficient implementation of the new methods, it is particularly useful to directly define all the basis functions $\lbrace \trialspacetimeiter{\ell}{(M)}(\uvec{x},t)\rbrace_{\ell=1,\dots,L^{(M)}}$, up to an accuracy order $M+1$, ordering them in increasing polynomial order. By doing so, it is enough to change the final index from $L^{(p-1)}$ to $L^{(p)}$ to pass from $X^{(p-1)}$ to $X^{(p)}$ in all the iterations but the last one, which is performed without changing polynomial space to saturate the related accuracy.
\end{remark}

\bibliography{sn-bibliography}
\bibliographystyle{plain}

\end{document}